\newcommand\set[1]{\{\,#1\,\}}
\newcommand\Iso{\operatorname{Iso}}
\newcommand\go{G^{(0)}}
\newcommand{\inv}{^{-1}}
\newcommand{\Z}{\mathbb{Z}}
\newcommand{\N}{\mathbb{N}}
\newcommand{\HoHo}{\pi}
\newcommand{\one}{\mathbf{1}}
\newcommand{\C}{\mathbb{C}}
\DeclareMathOperator{\supp}{supp}
\DeclareMathOperator{\lsp}{span}
\DeclareMathOperator{\dom}{Dom}
\DeclareMathOperator{\ran}{Ran}
\newtheorem{thm}{Theorem}[section]
\newtheorem{lemma}[thm]{Lemma}
\newtheorem{prop}[thm]{Proposition}
\newtheorem{cor}[thm]{Corollary}
\theoremstyle{definition}
\newtheorem{definition}[thm]{Definition}
\theoremstyle{remark}
\newtheorem{remark}[thm]{Remark}
\newtheorem{example}[thm]{Example}
\numberwithin{equation}{section}
\begin{document}

\title{Diagonal-preserving ring $*$-isomorphisms of Leavitt path algebras}

\author[J.H. Brown]{Jonathan H. Brown}
\address[J.H. Brown]{
Department of Mathematics\\
University of Dayton\\
300 College Park Dayton\\
OH 45469-2316 USA} \email{jonathan.henry.brown@gmail.com}

\author[L.O. Clark]{Lisa Orloff Clark}

\author[A. an Huef]{Astrid an Huef}

\address[L.O. Clark and A. an Huef]{Department of Mathematics and
Statistics, University of Otago, P.O. Box 56, Dunedin 9054, New Zealand}
\email{lclark@maths.otago.ac.nz}
\email{astrid@maths.otago.ac.nz }

\subjclass[2010]{16S99 (46L05)}

\keywords{Leavitt path algebra, directed graph, groupoid, Weyl groupoid, Steinberg algebra, diagonal subalgebra, diagonal-preserving isomorphism}
\thanks{This research was supported by a University of Otago Research Grant and by a University of Dayton Research Institute SEED grant. We thank 
Aidan Sims for pointing us to Stone Duality, and for an observation   in \S\ref{section-Stone} which made our results in \S\ref{sec-main} much cleaner.}

\date{1 March  2016}

\begin{abstract} The graph groupoids of directed graphs $E$ and $F$ are topologically isomorphic if and only if 
there is a diagonal-preserving ring $*$-isomorphism between the Leavitt path algebras of  $E$ and $F$. 
\end{abstract}

\maketitle

\section{Introduction}

Brownlowe, Carlsen and Whittaker show that there is a diagonal-preserving isomorphism between two graph $C^*$-algebras  $C^*(E)$ and $C^*(F)$ if and only 
if the corresponding graph groupoids  are topologically isomorphic \cite[Theorem~5.1]{BCW}.  In this paper, we prove the analogous theorem for Leavitt path algebras over an integral domain $R$.
Diagonal-preserving isomorphisms of $C^*$-algebras of graphs and groupoids have recently been of interest in \cite{NR, BCW,  BNRSW}, and are starting to show up now in the theory of Leavitt 
path algebras \cite{JS, Toke,  ABHS, TRS}.  Indeed, we have  revised a previous version  \cite{BCaH-previous} of this paper to compare our results to  the ones just announced in \cite{Toke}.

It follows from  \cite[Theorem~4.2]{Ren}, applied to the graph groupoid, that the ``diagonal subalgebra'' $D_E$ of $C^*(E)$ is a maximal abelian subalgebra if and only if the graph groupoid is topologically principal  (if and only if  every cycle in $E$ has an entry). In that case, the Weyl groupoid, constructed by Renault in \cite{Ren} from the inclusion of $D_E$ in $C^*(G)$, is isomorphic to the graph groupoid.   In \cite{BCW}, Brownlowe, Carlsen and Whittaker extend  Renault's construction so that it applies to the inclusion of  $D_E$ in $C^*(E)$ when the graph may have cycles without entries, and  they call the resulting groupoid  the ``extended Weyl groupoid''. They prove that the extended Weyl groupoid is always topologically isomorphic to the graph  groupoid.  Then the strategy of the proof of the main theorem in \cite{BCW} is to show that if there is a diagonal-preserving isomorphism between the graph $C^*$-algebras, then the corresponding extended Weyl groupoids are isomorphic, and then so are the 
corresponding graph groupoids.

Let  $\HoHo: C^*(E)\to C^*(F)$ be a diagonal-preserving isomorphism of $C^*$-algebras.  The diagonal $D_E$ is isomorphic to the algebra $C_0(E^\infty)$  of continuous functions on the infinite-path space $E^\infty$ which vanish at infinity.  Since $\HoHo$ preserves the diagonal, spectral theory gives a homeomorphism $\kappa: E^\infty\to F^\infty$ between the infinite-path spaces such that 
\begin{equation}\label{eq:kappa}
\HoHo(d)\circ\kappa=d
\end{equation} 
for all $d\in C_0(E^\infty)$. 

Now let $\HoHo: L_R(E)\to L_R(F)$  be a ring $*$-isomorphism  between the Leavitt path algebras, and assume 
$\pi$ preserves the algebraic diagonal subalgebra. The idempotents in each diagonal form a Boolean algebra, and  $\pi$ restricts to give an isomorphism of these Boolean algebras.  We use Stone duality  to show that this gives a  concrete homeomorphism $\kappa$ between the infinite path spaces such that  \eqref{eq:kappa} holds 
for characteristic functions in the algebraic diagonal. We then use an algebraic version of the extended Weyl groupoid to show that the corresponding graph groupoids are isomorphic. 
It then follows that  for any commutative ring $S$ with identity, the Leavitt path algebras $L_S(E)$ and $L_S(F)$ are $*$-isomorphic as $S$-algebras. It also follows that the graph algebras $C^*(E)$ and $C^*(F)$ are isomorphic as $C^*$-algebras.

Taking $R=\C$  gives some evidence that the ``isomorphism conjecture for graph algebras'' from \cite[page~3752]{AT} may be true.  The conjecture says: if $L_\C(E)$ and  $L_\C(F)$ are isomorphic as rings, then $C^*(E)$ and $C^*(F)$ are isomorphic as $C^*$-algebras. 
Our Corollary~\ref{cor1} says that if there is a diagonal-preserving ring $*$-isomorphism of $L_\C(E)$ onto $L_\C(F)$, then 
the $C^*$-algebras are isomorphic.  
Further, in Corollary~\ref{cor2} we combine our results with \cite[Theorem~1]{Toke} to see that $L_\Z(E)$ and $L_\Z(F)$ are $*$-isomorphic as $\Z$-algebras if and only if there is 
a diagonal-preserving ring $*$-isomorphism of $L_R(E)$ onto $L_R(F)$.

We make two technical innovations.  An $n\in C^*(E)$ is a  normalizer of the diagonal if $n^*D_En\subset D_E$ and $nD_En^*\subset D_E$.  Both the Weyl groupoid and its extension are defined using a partial action $\alpha$ of the set of normalizers on the infinite path space $E^\infty$.  This $\alpha$ is constructed using spectral theory in \cite[\S1]{Kum} and in \cite[\S3]{Ren}.
Our first innovation is that we are able to define an analogous partial action, also called $\alpha$, of the normalizers  in $L_R(E)$ of the algebraic diagonal. The key result in the definition of our $\alpha$ is Proposition~\ref{prop:supp n}, and we show  $\alpha$ is a partial action in Proposition~\ref{prop:alg act new}. Interestingly,  we have been unable to extend Proposition~\ref{prop:supp n} from graph groupoids to more general groupoids, not even to those  of a higher-rank graph. 

The second  innovation is the construction of our algebraic version of the extended Weyl groupoid. In extending Renault's construction,  Brownlowe, Carlsen and Whittaker needed to deal with an isolated infinite path $x$, and they did so using the K-theory class of certain unitaries in a corner of $C^*(E)$ determined by $x$.   We noticed that we could replace the $K_1$ class of the unitary by its  degree as an element in the graded algebra $L_R(E)$.    (See Definition~\ref{def:eqrel} and Remark~\ref{rem:eqrel}).

Given an ample Hausdorff groupoid, 
Clark, Farthing, Sims and Tomforde  in \cite{CFST}, and independently Steinberg in \cite{Ste}, 
introduced an algebraic analogue of the groupoid $C^*$-algebra which is now called the Steinberg algebra.  
If the groupoid is the graph groupoid of a directed graph $E$, then the associated Steinberg algebra is isomorphic to $L_R(E)$. 
In our work it was helpful to use groupoid techniques and to view the elements of $L_R(E)$ as functions. For this reason, we view $L_R(E)$ as the 
Steinberg algebra of the graph groupoid throughout (see Theorem~\ref{thm-model}).

Related but disjoint to our work are the recent results of Ara, Bosa, Hazrat and Sims in \cite{ABHS}.  Given a graded ample Hausdorff groupoid with a topologically principal $0$-graded component, they show how to recover the groupoid from the graded-ring structure of the Steinberg algebra and a diagonal subalgebra.  For graphs in which every cycle has an entry, they 
deduce that a diagonal-preserving ring isomorphism of Leavitt path algebras implies that the underlying graph groupoids are isomorphic.

We have been careful to make  our paper accessible to those not so familiar with topological groupoids.
In particular, we have been careful to supply the technical details  when it comes to the construction of the algebraic version of the  Weyl groupoid and results involving it.

\section{Preliminaries}\label{sec-prelim}

\subsection{Directed graphs and Leavitt path algebras}

A directed graph $E=(E^0, E^1, r, s)$ consists of countable sets $E^0$ and $E^1$, and range and source maps $r,s: E^1\to E^0$.  We think of $E^0$ as the set of vertices and $E^1$ as the set of edges directed by $r$ and $s$.    A graph is {\em row-finite} if $|r\inv(v)|<\infty$ and   it {has no sources} if $|r\inv(v)|>0$. Throughout we assume that all graphs are row-finite and have no sources.

  We  use the convention that a path is  a sequence of edges $\mu=\mu_1\mu_2\cdots$ such that $s(\mu_i)=r(\mu_{i+1})$.  We  denote the $i$th edge in a path $\mu$ by $\mu_i$.  We say a path $\mu$ is finite if the sequence is finite and denote its length by $|\mu|$.  We denote the set of finite paths by $P(E)$ and 
the set of infinite paths by $E^\infty$. 
For $n\in \N$, we write $E^n$ for the set of paths in $P(E)$ of length $n$, and $vE^n$ to be the paths in $E^n$
with range $v$.  We consider the vertices $E^0$ to be paths of length $0$.
For $\mu\in P(E) \cup E^\infty$ and  $0<n < m< |\mu|$  we set
 \[
 \mu(n,m)=\mu_{n+1}\cdots \mu_m\quad\text{ and}\quad\sigma^m(\mu)=\begin{cases} \mu_{m+1}\cdots \mu_{|\mu|} & \text{if~}  \mu\in P(E)\\
  \mu_{m+1}\cdots &\text{if~}\mu\in E^\infty.
\end{cases}\]

We extend the map $r$ to $\mu\in P(E) \cup E^\infty$  by $r(\mu)=r(\mu_1)$;  for $\mu\in P(E)$ we also extend $s$ by $s(\mu)=s(\mu_{|\mu|})$. We say $\mu\in P(E)$ is a {\em cycle} if $r(\mu)=s(\mu)$ and 
$s(\mu_i)\neq r(\mu_j)$ for  $1<i,j<|\mu|$, $i\neq j$.  We say an edge $e$ is an {\em entrance} to a cycle  
$\mu$ if there exists $i$ such that $r(e)=r(\mu_i)$ and $e\neq \mu_i$: 
note that $e$ may equal $\mu_j$ for some $j\neq  i$.  
We say $\mu$ is a cycle without entrance if it has no entrance.

Let  $(E^1)^*:= \{e^*: e \in E^1\}$ be a set of formal symbols called {\em ghost edges}. If $\mu\in P(E)$, then   we write $\mu^*$ for $\mu_{|\mu|}^*\dots\mu_2^*\mu_1^*$ and call it a \emph{ghost path}.  We extend $s$ and $r$ to the ghost paths  by $r(\mu^*)=s(\mu)$ and $s(\mu^*)=r(\mu)$.

\begin{definition}
Let  $E $ be a row-finite  directed graph with no sources, let $R$ be a commutative ring with 1 and  let $A$ be an $R$-algebra.  A Leavitt $E$-family in $A$ is a set $\{P_v,S_e, S_{e^*}:  v\in E^0,e\in E^1\}\subset A$ 
where $\{P_v: v\in E^0\}$ is a set of mutually orthogonal idempotents, and
\begin{enumerate}
\item[(L1)]\label{L1} $P_{r(e)}S_e=S_e=S_eP_{s(e)}$;
\item[(L2)]\label{L2} $P_{s(e)}S_{e^*}=S_{e^*}=S_{e^*}P_{r(e)}$;
\item[(L3)]\label{L3} $S_{e^*}S_f=\delta_{e,f}  P_{s(e)}$; 
\item[(L4)]\label{L4} for all $v\in E^0$ we have $P_v=\sum_{e\in  vE^1} S_eS_{e^*}$.
\end{enumerate}
\end{definition}
For a path $\mu\in P(E)$ we set $S_\mu:=S_{\mu_1}\dots S_{\mu_{|\mu|}}$.
The \emph{Leavitt path algebra}  $L_R(E)$  is the universal $R$-algebra generated by a  
universal Leavitt $E$-family  $\{p_v,s_e, s_{e^*}\}$:  that is, if $A$ is an $R$-algebra and $\{P_v, S_e, S_{e^*}\}$ is a Leavitt 
$E$-family in $A$, then there exists a unique $R$-algebra homomorphism $\pi: L_R(E)\to A$ such that $\pi(p_v)=P_v$ and $\pi(s_e)=S_e$  \cite[\S2]{Tom}.  For the existence of such an $R$-algebra, see \cite[Theorem~3.4]{ACaHR}. 
It follows from (L3) that \[
L_R(E)=\lsp_R\{s_\mu s_\nu^*:\mu, \nu\in P(E)\}.
\]
Thus each element of $L_R(E)$ is of the form $\sum_{(\mu,\nu)\in F} r_{\mu,\nu} s_{\mu}s_{\nu^*}$ where $F$ is a finite subset of $\{(\mu,\nu)\in P(E)\times P(E): s(\mu)=s(\nu)\}$ and each $r_{\mu,\nu}\in R$.
We assume that  $R$ has an involution $r\mapsto \overline{r}$  which fixes 
both the multiplicative and additive identities. For subrings $R$ of $\C$ we 
use complex conjugation. There is then a natural involution on $L_R(E)$ given by
$\sum_{(\mu,\nu)\in F} r_{\mu,\nu} s_{\mu}s_{\nu^*}\mapsto \sum_{(\mu,\nu)\in F} \overline{r_{\mu,\nu}} s_{\nu}s_{\mu^*}
$. The \emph{diagonal}  of $L_R(E)$ is the commutative $*$-subalgebra \[
D(E):=\lsp_R\{s_\mu s_\mu^*:\mu\in P(E)\}.
\]

\subsection{Groupoids}

Informally, a groupoid is a generalization of a group where  multiplication is partially defined.  More formally, a \emph{groupoid} $G$ is a small category in which every morphism is invertible.  We identify the objects in $G$ with the identity morphisms, and call the set of objects the unit space $\go$ of $G$.    As $G$ is a category, each morphism $\gamma$ in $G$ has a range and a source, which we denote $r(\gamma)$ and $s(\gamma)$ respectively;  we consider $r$ and $s$ as maps $r,s: G\to \go$.\footnote{We are now using $r$ and $s$ to mean two different things:  the range and source maps 
in the graph $E$ and the range and source maps of the groupoid $G$.  
It should be clear from context which map is used.}   Notice that
$
r|_{\go}=\text{id}_{\go}=s|_{\go}
$. 
Morphisms $\gamma$ and $\eta$ in $G$ are composable if and only if $s(\gamma)=r(\eta)$.  
For subsets $B$ and $C$ of $G$ we set
\[
BC:=\{\gamma\eta: s(\gamma)=r(\eta), \gamma\in B, \eta\in C\}\quad\text{and}\quad B\inv:=\{\gamma\inv: \gamma\in B\}.
\]

Let $E$ be a row-finite directed graph with no sources. 
We now describe the graph groupoid $G_E$ of $E$ from \cite[Proposition~2.6]{KPRR97}.
Two infinite paths
$x_1x_2\dots$ and $y=y_1y_2\dots$ in $E$ are \emph{shift equivalent with lag $k$}, written $x\sim_k y$,  if there exists $N\in\N$ such that for $i\geq N$, $x_{i+k}=y_i$: that is the infinite paths eventually are the same modulo a shift of $k$ edges.  
This happens if and only if there exist $\mu,\nu\in P(E)$ and  $z\in E^\infty$ such that $|\mu|-|\nu|=k$, $x=\mu z$  and $y=\nu z$.    
A quick computation shows that if $x\sim_k y$ and $y\sim_l z$, then $x\sim_{k+l} z$.

The groupoid $G_E$  has objects $E^\infty$ and morphisms
\[
 G_E=\{ (x,k,y)\in E^\infty\times \Z\times E^\infty: x\sim_k y\}
\]  where $r(x,k,y)=x$ and  $s(x,k,y)=y$. Composition of morphisms is  given by
\[
(x,k,y)(y,l,z)=(x,k+l,z).
\]
The identity morphism at $x\in E^\infty$ is  $(x,0,x)$, and  we identify  $E^\infty$ with the identity morphisms via $x\mapsto (x,0,x)$.
If $(x,k,y)\in G_E$, then $(y,-k,x)\in G_E$, and $(x,k,y)(y,-k,x)=(x,0,x)$ implies that $(x,k,y)^{-1}=(y,-k,x)$.
 
Let $\mu,\nu\in P(E)$ with $s(\mu)=s(\nu)$. Set
\[
Z(\mu,\nu):=\{(\mu z, |\mu|-|\nu|, \nu z): z\in s(\mu)E^\infty\}.
\]
Then $Z(\mu,\nu)\subset G_E^{(0)}$ if and only if $\mu=\nu$; then $Z(\mu,\mu)=\{(\mu z, 0, \mu z): z\in s(\mu)E^\infty\}$, and we identify it with
 \[
 Z(\mu):=\{\mu z: z\in s(\mu)E^\infty\}.
 \]
The sets $Z(\mu,\nu)$ form a basis for a Hausdorff topology on $G_E$ such that multiplication and inversion
are continuous, and each $Z(\mu,\nu)$ is compact.  Further, 
$r(Z(\mu, \nu)) = Z(\mu)$ and $s(Z(\mu,\nu))= Z(\nu)$.   
(See \cite[Corollary~2.2]{KPRR97} for details.)

\subsection{Modelling $L_R(E)$ with a Steinberg algebra}

Let $E$ be a row-finite directed graph with no sources, $G_E$ the graph groupoid and $R$  a commutative ring with identity.
For $B \subset G_E$, let $\one_B: G_E\to R$ be the characteristic function of $B$. The \emph{Steinberg algebra} of $G_E$ is 
\[A_R(G_E) = \lsp\{\one_{Z(\mu,\nu)}:\mu, \nu \in P(E)\},\]
with pointwise addition and scalar multiplication, and 
convolution product and  involution given by
\[
f*g(\gamma)=\sum_{r(\eta)=r(\gamma)} f(\eta)g(\eta\inv\gamma)\text{\ and\ }
f^*(\gamma)=\overline{f(\gamma\inv)}
\]
for $f,g \in A_R(G)$.   The support $\supp(f)={\set{ \gamma:f(\gamma)\neq 0}}$ of  $f \in A_R(G_E)$ is compact open in $G_E$. We have 
$\supp(f*g) \subset \supp(f)\supp(g)$.  By   \cite[Lemma~4.2]{ACaHR} or by \cite[Lemma~3.6]{CFST} we can write each $f\in A_R(G)$ in  \emph{normal form}
\[f=\sum_{(\mu,\nu)\in F} r_{\mu,\nu}\one_{Z(\mu,\nu)}\] where $F$ is a finite subset of $\{(\mu,\nu)\in P(E)\times P(E): s(\mu)=s(\nu)\}$  and $(\mu,\nu), (\beta,\gamma)\in F$ implies $|\nu|=|\beta|$ and $0\neq r_{\mu,\nu}\in R$;  it follows that the $Z(\mu,\nu)$ in $F$ are mutually disjoint. 
A computation shows that
\[\one_{Z(\mu,\nu)}*\one_{Z(\beta, \gamma)} = \one_{Z(\mu,\nu)Z(\beta,\gamma)} \quad \text{and} \quad (\one_{Z(\mu,\nu)})^* = \one_{Z(\nu,\mu)}.\]
It is straightforward to check that
 \[\{\one_{Z(v)}, \one_{Z(e, r(e))}, \one_{Z(r(e), e)}\}\] is a Leavitt $E$-family in
$A_R(G_E)$, and the following is proved in Example~3.2 of \cite{CS2015}.
\begin{thm}\label{thm-model}
\label{E to G}
Let $E$ be a row-finite directed graph with no sources and let $R$ a commutative ring with identity.    Then 
$s_e\mapsto \one_{Z(e, r(e))}$ induces a $*$-algebra isomorphism of $L_R(E)$ onto $A_R(G_E)$ such that
\begin{equation}
\label{groupoid normal}
\sum_{(\mu,\nu)\in F} r_{\mu,\nu} s_{\mu}s_{\nu^*}\mapsto \sum_{(\mu,\nu)\in F} r_{\mu,\nu}\one_{Z(\mu,\nu)}.
\end{equation}
\end{thm}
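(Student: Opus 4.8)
The plan is to build the isomorphism from the universal property of $L_R(E)$ and then to establish injectivity through a normal-form argument. First I would use the already-noted fact that $\{\one_{Z(v)}, \one_{Z(e,r(e))}, \one_{Z(r(e),e)}\}$ is a Leavitt $E$-family in $A_R(G_E)$: by the universal property of $L_R(E)$ this yields a unique $R$-algebra homomorphism $\HoHo\colon L_R(E)\to A_R(G_E)$ with $\HoHo(p_v)=\one_{Z(v)}$, $\HoHo(s_e)=\one_{Z(e,r(e))}$ and $\HoHo(s_{e^*})=\one_{Z(r(e),e)}$. To see that $\HoHo$ is a $*$-homomorphism it suffices to check the identity $\HoHo(x^*)=\HoHo(x)^*$ on the generators, since both sides are conjugate-linear anti-homomorphisms; this is immediate from $(\one_{Z(\mu,\nu)})^*=\one_{Z(\nu,\mu)}$. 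Next I would compute $\HoHo$ on a spanning monomial $s_\mu s_{\nu^*}$: using multiplicativity together with the product rule $\one_{Z(\alpha,\beta)}*\one_{Z(\gamma,\delta)}=\one_{Z(\alpha,\beta)Z(\gamma,\delta)}$ and the groupoid composition of the basic sets $Z(\mu,\nu)$, an induction on $|\mu|+|\nu|$ gives $\HoHo(s_\mu s_{\nu^*})=\one_{Z(\mu,\nu)}$, which is the asserted formula. Since the $\one_{Z(\mu,\nu)}$ span $A_R(G_E)$, surjectivity of $\HoHo$ follows at once.

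The main content is injectivity, and here I would exploit the normal form. Given $a\in L_R(E)$ with $\HoHo(a)=0$, write $a=\sum_{(\mu,\nu)\in F} r_{\mu,\nu}s_\mu s_{\nu^*}$. Any monomial with $s(\mu)\neq s(\nu)$ vanishes by (L1)--(L2), so I may assume $s(\mu)=s(\nu)$ throughout. Because $E$ has no sources, relation (L4) lets me lengthen a monomial via $s_\mu s_{\nu^*}=\sum_{e\in s(\mu)E^1} s_{\mu e}s_{(\nu e)^*}$, which increases $|\mu|$ and $|\nu|$ by one while leaving $|\mu|-|\nu|$ fixed. Applying this repeatedly, and keeping the equality valid in $L_R(E)$, I can arrange that every surviving monomial has $|\nu|$ equal to a single fixed $N$; within each value of the degree $|\mu|-|\nu|$ this also fixes $|\mu|$. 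After collecting equal pairs $(\mu,\nu)$ and discarding zero coefficients I obtain $a=\sum_{(\mu,\nu)\in F'} r'_{\mu,\nu}s_\mu s_{\nu^*}$ with the indexing pairs normalized in this way.

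Now apply $\HoHo$ to get $0=\sum_{(\mu,\nu)\in F'} r'_{\mu,\nu}\one_{Z(\mu,\nu)}$. The key point is that the sets $Z(\mu,\nu)$ indexed by $F'$ are pairwise disjoint: two of them with distinct values of $|\mu|-|\nu|$ are disjoint because the middle coordinate of every element of $Z(\mu,\nu)$ equals $|\mu|-|\nu|$, while two with the same degree have $|\mu|$ and $|\nu|$ both fixed and so coincide only when $(\mu,\nu)=(\mu',\nu')$. Since $E$ has no sources each $Z(\mu,\nu)$ is nonempty, so evaluating the equation at a point of a chosen $Z(\mu,\nu)$ forces the corresponding $r'_{\mu,\nu}$ to be $0$; hence $F'=\emptyset$ and $a=0$. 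This proves injectivity, and with it the theorem.

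I expect injectivity to be the main obstacle, precisely because the monomials $s_\mu s_{\nu^*}$ do not form a basis of $L_R(E)$: the work is in showing that the relations (L1)--(L4) alone suffice to bring an arbitrary element to a form on which $\HoHo$ is visibly faithful, with the ``no sources'' hypothesis entering twice (to run the (L4)-lengthening and to guarantee nonempty $Z(\mu,\nu)$). An alternative route that avoids the explicit reduction would be to note that both algebras are $\Z$-graded with $s_\mu s_{\nu^*}$ and $\one_{Z(\mu,\nu)}$ in degree $|\mu|-|\nu|$, that $\HoHo$ is graded, and that $\HoHo(r p_v)=r\one_{Z(v)}\neq 0$ for $r\neq 0$, and then to invoke the graded-uniqueness theorem for Leavitt path algebras over a commutative ring.
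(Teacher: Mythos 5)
Your proof is correct, and it is genuinely more self-contained than what the paper does: the paper merely observes that $\{\one_{Z(v)},\one_{Z(e,r(e))},\one_{Z(r(e),e)}\}$ is a Leavitt $E$-family and then cites Example~3.2 of \cite{CS2015} for the rest, and the argument in the literature runs exactly along the lines of your closing remark --- the homomorphism supplied by the universal property is $\Z$-graded (with $\one_{Z(\mu,\nu)}$ in degree $|\mu|-|\nu|$, the grading coming from the cocycle $(x,k,y)\mapsto k$), it satisfies $\HoHo(rp_v)=r\one_{Z(v)}\neq 0$ for $r\neq 0$, and Tomforde's graded-uniqueness theorem over a commutative ring gives injectivity. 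Your main route replaces that citation by a direct computation: the (L4)-lengthening $s_\mu s_{\nu^*}=\sum_{e\in s(\mu)E^1}s_{\mu e}s_{(\nu e)^*}$ (a finite sum because $E$ is row-finite, nondegenerate because there are no sources) brings an arbitrary element to the same normal form that the paper quotes for $A_R(G_E)$ from \cite[Lemma~4.2]{ACaHR} and \cite[Lemma~3.6]{CFST}, and then the pairwise disjointness of the sets $Z(\mu,\nu)$ (distinct middle coordinates when the degrees differ; equal lengths forcing $(\mu,\nu)=(\mu',\nu')$ when they agree) together with nonemptiness of each $Z(\mu,\nu)$ lets you read off every coefficient by evaluating at a point. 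What this buys is an elementary proof with no black boxes, in effect re-proving the normal-form lemma on the Leavitt side, at the cost of redoing work the paper deliberately outsources; note too that your evaluation argument uses nothing about $R$ beyond commutativity with identity, correctly matching the theorem's hypotheses (no integral-domain assumption enters, unlike in later sections). One cosmetic point you inherited from the statement: since $Z(\mu,\nu)$ is only defined when $s(\mu)=s(\nu)$, the generator $s_e$ is really sent to $\one_{Z(e,s(e))}$; with that reading your verification that the family satisfies (L1)--(L4) and your induction giving $\HoHo(s_\mu s_{\nu^*})=\one_{Z(\mu,\nu)}$ go through verbatim.
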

The isomorphism maps the diagonal $D(E)$ of $L_R(E)$ onto the commutative *-subalgebra $\lsp_R\{\one_{Z(\mu)} :\mu\in P(E)\}$.

Let $\mu,\nu,\beta,\gamma\in P(E)$. In $L_R(E)$ we have the multiplication formula,
\[s_\mu s_{\nu^*}s_\beta s_{\gamma^*}=\begin{cases}  s_{\mu\beta'}s_{\gamma^*} &\text{if~}\exists \beta'\in P(E) \text{~such that~} \beta=\nu\beta'\\
s_\mu s_{(\gamma\nu')^*}&\text{if~}\exists \nu'\in P(E) \text{~such that~} \nu=\beta\nu'\\
0 &\text{otherwise}
\end{cases}\]
(see, for example, \cite[Corollary~1.14]{R-CBMS}).  This translates into 
\begin{equation}\label{it:bp1} Z(\mu,\nu)Z(\beta,\gamma)=
\begin{cases}  Z(\mu\beta',\gamma) &\text{if~}\exists \beta'\in P(E) \text{~such that~} \beta=\nu\beta'\\
Z(\mu,\gamma\nu') &\text{if~}\exists \nu'\in P(E) \text{~such that~} \nu=\beta\nu'\\
\emptyset &\text{otherwise}.
\end{cases}
\end{equation}

We will work exclusively with the Steinberg algebra model of $L_R(E)$. We  abuse notation and write $L_R(E)$  for $A_R(G_E)$ and $D(E)$ for the image of the diagonal $D(E)$  of $L_R(E)$ in $A_R(G)$.  In particular, we view elements of $L_R(E)$ as functions from $G_E$ to $R$.

%
\section{Normalizers in  the Leavitt path algebra.}\label{sec-normalizers}

Throughout this section $E$ is a row-finite graph with no sources and $R$ is an integral domain with identity.  We view the Leavitt path algebra $L_R(E)$ as the Steinberg algebra of the graph groupoid $G_E$ with diagonal 
\[
D(E):=\lsp\{\one_{Z(\mu)}: \mu \in P(E)\}.
\]

\begin{definition}
An $n\in L_R(E)$ is  a \emph{normalizer} of $D(E)$ if $ndn^*, n^*dn\in D(E)$ for all $d\in D(E)$.
We denote the set of normalizers of $D(E)$ by $N(E)$.
\end{definition}

In this section we prove that there is a partial action $\alpha$ of $N(E)$ on $E^\infty$, 
and we investigate the properties of the normalizers. This will allow us in \S\ref{secWeyl}
 to construct an algebraic analogue of the extended Weyl groupoid from \cite[Section~4]{BCW}.

Let $n, m\in N(E)$. Then $n^*$ and $nm\in N(E)$. Because $D(E)$ is a $*$-subring, $d\in N(E)$ for all $d\in D(E)$.  Since $D(E)$ contains the  set $\{\sum_{v\in V} \one_{Z(v)}: \text{$V$ is a finite subset of $E^0$}\}$ of local units for $L_R(E)$, we have $nn^*$ and $n^*n\in D(E)$.  In particular
\[
\supp(nn^*)\cup \supp(n^*n)\subset E^\infty.
\]
If $n=\sum_{(\mu,\nu)\in F} r_{\mu,\nu}\one_{Z(\mu,\nu)}$ is in normal form, then the $Z(\mu,\nu)$ are mutually disjoint, and hence 
the support of $n$ is the disjoint union of the $Z(\mu,\nu)$
and $\supp(n^*)=\supp(n)\inv$.

\begin{example}  Let $\mu,\nu\in P(E)$ with $s(\mu)=s(\nu)$.  We show that  $\one_{Z(\mu, \nu)} \in N(E)$. 
It suffices to see that   $\one_{Z(\mu, \nu)}\one_{Z(\tau)}\one_{Z(\nu, \mu)}$ is in  $D(E)$ for all $\tau\in P(E)$.  Suppose $0\neq \one_{Z(\mu, \nu)}\one_{Z(\tau)}\one_{Z(\nu, \mu)}$. Then either $\tau=\nu\tau'$ or $\nu=\tau\nu'$ for some $\tau', \nu'\in P(E)$, and then the product is either $Z(\mu\tau')$ or $Z(\mu)$. Thus $\one_{Z(\mu, \nu)}\in N(E)$.
\end{example}

The next proposition is the key to defining a partial action $\alpha$ of $N(E)$ on $E^\infty$.

\begin{prop}\label{prop:supp n} 
Let $n\in N(E)$ and let $x\in\supp(n)$.   Then 
\begin{enumerate}
\item \label{it:lemsupp 1} $\supp(n)\supp (n)\inv\cup \supp(n)\inv\supp(n)\subset \Iso(G_E):=\{g\in G_E: r(g)=s(g)\}$;
\item\label{it:lemsupp 2} $r(\supp(n)x)$ is a singleton.
\end{enumerate}
\end{prop}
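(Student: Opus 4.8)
The plan is to establish~\eqref{it:lemsupp 1} and then read off~\eqref{it:lemsupp 2}. Since $n^*\in N(E)$ with $\supp(n^*)=\supp(n)\inv$, the inclusion $\supp(n)\inv\supp(n)\subset\Iso(G_E)$ is just $\supp(n)\supp(n)\inv\subset\Iso(G_E)$ applied to $n^*$, so I would only prove the latter, i.e.\ that for $g,h\in\supp(n)$ with $s(g)=s(h)$ one has $r(g)=r(h)$. Writing $n$ in normal form $\sum_{(\mu,\nu)\in F}r_{\mu,\nu}\one_{Z(\mu,\nu)}$ (disjoint supports, all $|\nu|$ equal to a common $L$), the hypothesis $s(g)=s(h)$ forces $g\in Z(\mu_0,\nu_0)$ and $h\in Z(\beta_0,\nu_0)$ with the \emph{same} second coordinate $\nu_0$ and the same tail, and disjointness gives $n(g)=r_{\mu_0,\nu_0}\neq0$ and $n(h)=r_{\beta_0,\nu_0}\neq0$. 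In this way~\eqref{it:lemsupp 1} reduces to the statement that for each $\nu_0$ at most one $\mu$ has $(\mu,\nu_0)\in F$.

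To prove that, I would argue by contradiction, assuming $M_{\nu_0}:=\{\mu:(\mu,\nu_0)\in F\}$ has two elements. First I would isolate the $\nu_0$-column by replacing $n$ with $m:=n\one_{Z(\nu_0)}$; since $\one_{Z(\nu_0)}\in D(E)$ and $N(E)$ is closed under products, $m\in N(E)$ and $mm^*\in D(E)$. A direct computation gives $m=\sum_{\mu\in M_{\nu_0}}r_{\mu,\nu_0}\one_{Z(\mu,\nu_0)}$ and $mm^*=\sum_{\mu,\beta\in M_{\nu_0}}r_{\mu,\nu_0}\overline{r_{\beta,\nu_0}}\one_{Z(\mu,\beta)}$, where now each coefficient is a single product of nonzero elements of the integral domain $R$, hence nonzero.

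I would then choose $\mu_0\in M_{\nu_0}$ of minimal length and $\beta_0\in M_{\nu_0}\setminus\{\mu_0\}$, pick a tail $z\in s(\nu_0)E^\infty$ with $\mu_0 z\neq\beta_0 z$, and evaluate $mm^*$ at $\gamma:=(\mu_0 z,|\mu_0|-|\beta_0|,\beta_0 z)\in Z(\mu_0,\beta_0)$. As $\gamma\notin G_E^{(0)}$ and $mm^*\in D(E)$, we get $mm^*(\gamma)=0$. The \emph{main obstacle} is that many pairs $(\mu,\beta)$ could a priori contribute to this value and cancel, and defeating that cancellation is the crux. A pair contributes only when $\mu$ is a prefix of $\mu_0 z$ lying in $M_{\nu_0}$: minimality of $\mu_0$ rules out every proper prefix and leaves $\mu\in\{\mu_0\}\cup\{\mu_0\rho\}$, the case $\mu=\mu_0$ forces $\beta=\beta_0$ automatically (match range, shift, then source), and choosing $z$ to avoid the finitely many nonempty $\rho$ with $\mu_0\rho\in M_{\nu_0}$ kills the rest. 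Hence $mm^*(\gamma)=r_{\mu_0,\nu_0}\overline{r_{\beta_0,\nu_0}}\neq0$, contradicting $mm^*(\gamma)=0$.

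It remains to confirm that such a $z$ exists, which is clear unless those finitely many cylinders exhaust $s(\nu_0)E^\infty$; that degenerate configuration I would treat separately by passing to a longer common initial segment, and there no violation arises since the admissible sources then force $\mu_0 z=\beta_0 z$. Finally,~\eqref{it:lemsupp 2} is immediate from~\eqref{it:lemsupp 1}: every element of $\supp(n)x$ has source $r(x)$, so by~\eqref{it:lemsupp 1} they share a single range, whence $r(\supp(n)x)$ is a singleton.
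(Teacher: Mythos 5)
Your skeleton is sound and close in spirit to the paper's proof (isolate one $\nu$-column with a diagonal projection, use that the resulting element times its adjoint lies in $D(E)$ and hence vanishes off the unit space, and derive a contradiction from a surviving nonzero coefficient, using that $R$ is an integral domain), and your reduction is clean: if each column $M_{\nu_0}$ is a singleton, then $s$ is injective on $\supp(n)$, which gives \eqref{it:lemsupp 1} and \eqref{it:lemsupp 2} at once. But the last step has a genuine gap. Your anti-cancellation device uses only minimality of $|\mu_0|$ plus a tail $z$ avoiding the finitely many cylinders $Z(\rho)$ with $\mu_0\rho\in M_{\nu_0}$, and this fails exactly when those cylinders cover $s(\nu_0)E^\infty$. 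Your patch for that case is not a proof, and its asserted conclusion is false: if $s(\mu_0)$ carries two loops $e,f$ and $M_{\nu_0}=\{\mu_0,\mu_0e,\mu_0f\}$, then $Z(e)\cup Z(f)$ exhausts $s(\mu_0)E^\infty$, yet with $\beta_0=\mu_0e$ one has $\mu_0z\neq\beta_0z$ for every $z\neq eee\cdots$. Worse, even where $\mu_0z=\beta_0z$ does hold for all admissible $z$ (e.g.\ when $s(\nu_0)E^\infty$ is a single path on a cycle without entry), this does not rescue you: then $\gamma=(\mu_0z,|\mu_0|-|\beta_0|,\mu_0z)$ lies in $\Iso(G_E)\setminus G_E^{(0)}$, and since $mm^*\in D(E)$ is supported on the unit space (not merely on $\Iso(G_E)$) you still need to contradict $mm^*(\gamma)=0$; now all the nested pairs $(\mu_0\rho,\beta_0\rho)$ genuinely pile up at the single point $\gamma$, and excluding their cancellation requires a degree-type argument that your write-up does not contain.

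The repair is cheap and stays inside your framework: take $\mu_0\in M_{\nu_0}$ of \emph{minimal} length and $\beta_0\in M_{\nu_0}$ of \emph{maximal} length with $\mu_0\neq\beta_0$ (if all lengths in $M_{\nu_0}$ agree, any two distinct elements do). A pair contributing to $mm^*(\gamma)$ must, as in your prefix analysis, have the form $(\mu_0\rho,\beta_0\rho)$ with $\rho$ a prefix of $z$ and \emph{both} entries in $M_{\nu_0}$; maximality of $|\beta_0|$ forces $\rho$ to be empty, so $mm^*(\gamma)=r_{\mu_0,\nu_0}\overline{r_{\beta_0,\nu_0}}\neq 0$ for \emph{every} $z\in s(\nu_0)E^\infty$, and no choice of $z$ or case analysis is needed (note $\gamma\notin G_E^{(0)}$ automatically: either the lag $|\mu_0|-|\beta_0|$ is nonzero, or the lengths agree and then $\mu_0z\neq\beta_0z$). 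This min/max device is precisely the engine of the paper's own proof, where one cuts down by a long projection $\one_{Z(\tau)}$ with $\tau$ extending $\nu_1$ along $s(g_1)$ and observes that the unique summands of maximal and minimal degree in $n\one_{Z(\tau)}n^*$ cannot be cancelled. With that fix, your argument is correct and in fact somewhat tidier than the paper's: multiplying by $\one_{Z(\nu_0)}$ rather than a long $\tau$, and proving the stronger statement that the normal form of a normalizer contains at most one $\mu$ per $\nu$, from which both parts of the proposition drop out immediately.
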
   

\begin{proof}
\eqref{it:lemsupp 1} Write $n=\sum_{(\mu,\nu)\in F} r_{\mu,\nu}\one_{Z(\mu,\nu)}$ in normal form. Since \[\supp(n)\inv\supp(n)=\supp(n^*)\supp(n^*)\inv,\] it suffices to show \[\supp(n)\supp (n)\inv\subset \Iso(G_E).\]   Aiming for a  contradiction, we suppose there exist $g_1,g_2\in \supp(n)$ such that $g_1g_2\inv\in \supp(n)\supp(n)\inv\setminus\Iso(G_E)$. Then  $s(g_1)=s(g_2)$ and $r(g_1)\neq r(g_2)$. Since the $Z(\mu,\nu)$ are mutually disjoint  and $s|_{Z(\mu,\nu)}$ is injective, there exist unique pairs $(\mu_i,\nu_i)\in F$ such that $g_i\in Z(\mu_i,\nu_i)$ and $Z(\mu_1,\nu_1)\neq Z(\mu_2,\nu_2)$.

Suppose there exists $(\mu,\nu)\in F$ such that $s(g_1)\in Z(\nu)$. Then $\nu=s(g_1)(0,|\nu|)=s(g_1)(0,|\nu_1|)=\nu_1$. In particular, since $s(g_1)=s(g_2)$ we have $\nu_2=\nu_1$.  Define 
\[
\Omega=\{\mu: (\mu,\nu_1)\in F\}.
\]
Since $r|_{Z(\mu,\nu)}$ is injective, for  each $(\mu,\nu)\in F$, $r(g_1) Z(\mu,\nu)$ contains at most one element.  Thus  $r(g_1) \supp(n)$ is finite and hence so is $s(r(g_1) \supp(n))$; it is nonempty since $s(g_1)\in s(r(g_1) \supp(n))$.   
Choose  $k>|\nu_1|$ such that
\[
y\in s(r(g_1) \supp(n))\setminus\{s(g_1)\}\quad\Longrightarrow \quad y\notin Z(s(g_1)(0,k)).
\]
Set $\tau=s(g_1)(0,k)$. Then  $r(g_1)\supp(n) Z(\tau)=\{s(g_1)\}$.

Of course,  $\one_{Z(\tau)}\in D(E)$ and $n$ is a normalizer, and so
\begin{equation}
\label{eq:conj}
n\one_{Z(\tau)} n^*= \sum_{\kappa,\mu\in \Omega} r_{\kappa, \nu_1}\overline{r_{\mu, \nu_1}} \one_{Z(\kappa, \nu_1)Z(\tau)Z(\nu_1, \mu)}\in D(E).
\end{equation}
Since $R$ is an integral domain and $\tau$ extends $\nu_1$, each $(\kappa,\mu)$-summand in \eqref{eq:conj} is non-zero.
By assumption,  $g_1g_2\inv \notin \Iso(G_E)$, and in particular $g_1g_2\inv \notin \go_E$.  Since $n\one_{Z(\tau)} n^*\in D(E)$ we must have
\[
n\one_{Z(\tau)} n^*(g_1g_2\inv)=0.
\]
But, for example, we have $\mu_1, \mu_2\in\Omega$ with $\one_{Z(\mu_1,\nu_1)Z(\tau) Z(\nu_1, \mu_2)}(g_1g_2\inv)\neq 0$, and hence  there must exist $\mu_3,\mu_4\in \Omega$ with $(\mu_1,\mu_2)\neq (\mu_3,\mu_4)$ such that 
\[
\one_{Z(\mu_3,\nu_1)Z(\tau) Z(\nu_1, \mu_4)}(g_1g_2\inv)\neq 0.
\]

For $i=1,2$, set
\[
F_i:=\{(\mu,\nu_1)\in F: r(g_i)\in Z(\mu)\}.\]
Note that $(\mu_1,\nu_1), (\mu_3,\nu_1)\in F_1$ and $(\mu_2,\nu_1),(\mu_4,\nu_1)\in F_2$.  Thus both $F_1$ and $F_2$ have at least 2 elements. 

We will obtain a contradiction by showing that $F_1$ and $F_2$ are both singletons.
For $i=1,2$, define
$c_i: F_i\to \Z$ by $c_i((\mu,\nu_1))=|\mu|-|\nu_1|$.
Then $c_i$ is injective because $c_i((\mu, \nu_1))=c_i((\mu', \nu_1))$ implies that $|\mu|=|\mu'|$ and so $\mu=r(g_i)(0,|\mu|)=r(g_i)(0,|\mu'|)=\mu'$.

Since $c_1 $ is injective, there exist unique elements  $(\kappa_{\max},\nu_1), (\kappa_{\min},\nu_1)\in F_1$ such that $|\kappa_{\max}|-|\nu_1|$ is a maximum and $|\kappa_{\min}|-|\nu_1|$ is a minimum. Similarly, there exist unique elements  $(\mu_{\max},\nu_1), (\mu_{\min},\nu_1)\in F_2$ such that $|\mu_{\max}|-|\nu_1|$ is a maximum and $|\mu_{\min}|-|\nu_1|$ is a minimum.  It follows that the
$(\kappa_{\max},\mu_{\min})$-summand of \eqref{eq:conj}  has degree $|\kappa_{\max}|-|\mu_{\min}|$, the largest possible. Similarly, the  $(\kappa_{\min},\mu_{\max})$-summand of \eqref{eq:conj}  has degree $|\kappa_{\min}|-|\mu_{\max}|$, the smallest possible. If either $|\kappa_{\max}|-|\mu_{\min}|\neq 0$ or $|\kappa_{\min}|-|\mu_{\max}|\neq 0$, then these summands cannot be cancelled, contradicting $n\one_{Z(\tau)} n^*(g_1g_2\inv)=0$. Thus $|\kappa_{\max}|=|\mu_{\min}|$ and $|\kappa_{\min}|=|\mu_{\max}|=0$.
Now $|\kappa_{\max}|=|\mu_{\min}|\leq|\mu_{\max}|=|\kappa_{\min}|$, and then we must have  equality throughout.  Since $c_1$ is injective, $|\kappa_{\max}|=|\kappa_{\min}|$ implies $\kappa_{\max}=\kappa_{\min}$. Similarly $\mu_{\max}=\mu_{\min}$. It follows that both $F_1$ and $F_2$ are singletons, a contradiction.  Thus $\supp(n)\supp (n)\inv\subset \Iso(G_E)$.  This gives  \eqref{it:lemsupp 1}.

\eqref{it:lemsupp 2} Since $x\in \supp(n)$ we have $r(\supp(n)x)\neq \emptyset$. Let $u_1$ and $u_2\in r(\supp(n)x)$.  Then there exist $g_1,g_2\in \supp(n)x$ with $r(g_i)=u_i$ for $i=1,2$.  Now \eqref{it:lemsupp 1} implies\[g_1g_2\inv\in \supp(n)\supp(n)\inv\subset \Iso(G).\] Thus $u_1=r(g_1)=r(g_1g_2\inv)=s(g_1g_2\inv)=r(g_2)=u_2$, 
and  $r(\supp(n)x)$ is  a singleton.
\end{proof}

\begin{lemma}\label{baby steps}
Let $n\in N(E)$. Define
\[
\dom(n):=
\supp(n^*n) \text{\ and\ }
\ran(n):=
\supp(nn^*).
\]
\begin{enumerate}
\item 
Let  $x\in s(\supp(n))$. Then $\alpha_n(x):=r(\supp(n)x)$  gives a well-defined function $\alpha_n:\supp(n)\to E^\infty$.
\item 
Let $d\in D(E)$ and view $d \circ \alpha_n:G_E \to R$  by setting $d \circ \alpha_n(g) = 0$ if
$g \notin s(\supp(n))$.  Then 
\begin{align}
n^*d&=(d\circ\alpha_n)n^*\label{eq:dn-new} \text{ and }\\
n^*dn&=(d\circ\alpha_n)n^*n\label{eq:dn2}.
\end{align}
\item
If $x \in \dom(n)$, then $\alpha_n(x)\in\ran(n)$.
\end{enumerate}
\end{lemma}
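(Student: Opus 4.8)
The plan is to handle the three parts in turn, doing almost all of the work in part~(3) and deriving parts~(1) and~(2) quickly from Proposition~\ref{prop:supp n}. For part~(1), if $x\in s(\supp(n))$ then some $g\in\supp(n)$ has $s(g)=x$, so $\supp(n)x\neq\emptyset$; Proposition~\ref{prop:supp n}\eqref{it:lemsupp 2} says $r(\supp(n)x)$ is a singleton, and I would identify this singleton with its unique element to obtain a well-defined $\alpha_n(x)\in E^\infty$ for each $x\in s(\supp(n))$.

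For part~(2), I would verify \eqref{eq:dn-new} pointwise and then obtain \eqref{eq:dn2} by convolving \eqref{eq:dn-new} on the right by $n$. Because every element of $D(E)$ is supported on the unit space $E^\infty$, and $d\circ\alpha_n$ is supported on $s(\supp(n))\subseteq E^\infty$, both convolutions collapse to a single term: for $\gamma\in G_E$ one gets $n^*d(\gamma)=n^*(\gamma)\,d(s(\gamma))$ and $(d\circ\alpha_n)n^*(\gamma)=(d\circ\alpha_n)(r(\gamma))\,n^*(\gamma)$. When $n^*(\gamma)=0$ both sides vanish. When $n^*(\gamma)\neq 0$ we have $\gamma\in\supp(n^*)=\supp(n)\inv$, so $\gamma\inv\in\supp(n)$ with $s(\gamma\inv)=r(\gamma)$; hence $\gamma\inv\in\supp(n)r(\gamma)$, and the definition of $\alpha_n$ together with Proposition~\ref{prop:supp n}\eqref{it:lemsupp 2} forces $\alpha_n(r(\gamma))=r(\gamma\inv)=s(\gamma)$. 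Thus $(d\circ\alpha_n)(r(\gamma))=d(s(\gamma))$ and the two expressions agree (using commutativity of $R$), proving \eqref{eq:dn-new}.

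The heart of the lemma is part~(3), and here the key observation is that on a unit the convolution formula gives
\[
n^*n(x)=\sum_{g\in\supp(n),\,s(g)=x}\overline{n(g)}\,n(g),
\qquad
nn^*(y)=\sum_{g\in\supp(n),\,r(g)=y}\overline{n(g)}\,n(g).
\]
If $x\in\dom(n)=\supp(n^*n)$ then $n^*n(x)\neq0$, so the indexing set $\{g\in\supp(n):s(g)=x\}$ is nonempty; in particular $x\in s(\supp(n))$ and $y:=\alpha_n(x)$ is defined. I would then show the two indexing sets coincide, i.e. $\{g\in\supp(n):s(g)=x\}=\{g\in\supp(n):r(g)=y\}$. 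The inclusion $\subseteq$ is immediate, since Proposition~\ref{prop:supp n}\eqref{it:lemsupp 2} forces every $g\in\supp(n)$ with $s(g)=x$ to satisfy $r(g)=\alpha_n(x)=y$. For $\supseteq$, fix $g_0\in\supp(n)$ with $s(g_0)=x$ (hence $r(g_0)=y$); given any $g\in\supp(n)$ with $r(g)=y$, the element $g_0\inv g$ lies in $\supp(n)\inv\supp(n)\subseteq\Iso(G_E)$ by Proposition~\ref{prop:supp n}\eqref{it:lemsupp 1}, and equating its range $s(g_0)=x$ with its source $s(g)$ gives $s(g)=x$. With the two sets equal, the displayed sums agree term by term, so $nn^*(y)=n^*n(x)\neq0$, whence $y=\alpha_n(x)\in\supp(nn^*)=\ran(n)$.

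I expect the main obstacle to be part~(3), specifically the set equality $\{g:s(g)=x\}=\{g:r(g)=y\}$, which is precisely where Proposition~\ref{prop:supp n}\eqref{it:lemsupp 1} does the work: the isotropy condition is what prevents $nn^*(y)$ from being a genuinely different sum from $n^*n(x)$. It is worth emphasizing that, because the two sums turn out to be \emph{literally equal}, I never need to argue that an expression of the form $\sum\overline{r}\,r$ is nonzero; the hypothesis that $R$ is an integral domain enters only through its use inside Proposition~\ref{prop:supp n}.
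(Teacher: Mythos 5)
Your proof is correct. Parts (1) and (2) match the paper's argument essentially verbatim: well-definedness comes from Proposition~\ref{prop:supp n}\eqref{it:lemsupp 2}, both convolutions collapse to a single term because $d$ and $d\circ\alpha_n$ are supported on units, and \eqref{eq:dn2} follows by multiplying \eqref{eq:dn-new} on the right by $n$; your explicit check that $n^*(\gamma)\neq 0$ forces $\alpha_n(r(\gamma))=r(\gamma\inv)=s(\gamma)$ makes precise a step the paper's computation leaves implicit. Part (3) is where you genuinely diverge. The paper deduces it in two lines by applying \eqref{eq:dn2} with $d=nn^*$ and evaluating at $x$, obtaining $(n^*n(x))^2=nn^*(\alpha_n(x))\,n^*n(x)$, and then invoking the integral-domain hypothesis to conclude $(n^*n(x))^2\neq 0$ and hence $nn^*(\alpha_n(x))\neq 0$. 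You instead prove the stronger exact identity $nn^*(\alpha_n(x))=n^*n(x)$ by showing that the fibers $\{g\in\supp(n):s(g)=x\}$ and $\{g\in\supp(n):r(g)=\alpha_n(x)\}$ coincide, with the reverse inclusion supplied by the isotropy statement Proposition~\ref{prop:supp n}\eqref{it:lemsupp 1} --- a part of that proposition the paper's proof of this lemma never uses (it only invokes part \eqref{it:lemsupp 2}). What each buys: the paper's route is shorter and recycles \eqref{eq:dn2}, whereas yours isolates exactly where zero divisors matter --- as you observe, your part (3) goes through over any commutative ring with involution once Proposition~\ref{prop:supp n} is granted, since the two sums are literally term-by-term equal --- and it yields the sharper conclusion $nn^*(\alpha_n(x))=n^*n(x)$, which the paper's displayed equation only gives after cancelling $n^*n(x)$ (again using that $R$ has no zero divisors). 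Both arguments are complete and rest on the same key proposition.
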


\begin{proof} 
Let  $x\in s(\supp(n))$. Then $\alpha_n(x):=r(\supp(n)x)$ is well-defined by Proposition~\ref{prop:supp n}\eqref{it:lemsupp 2}. Let $d\in D(E)$ and let $g\in G_E$.
Then 
\begin{align*}
(n^*d)(g)
&=\sum_{r(g)=r(h)}n^*(h)d(h^{-1}g)\\
&=n^*(g)d(s(g))
\end{align*}
and
\begin{align*}((d\circ\alpha_n)n^*)(g) &=  \sum_{r(g)=r(h)}d(\alpha_n(h))n^*(h^{-1}g)\\
&=n^*(g)d(\alpha_n(r(g)))\\ 
&= n^*(g)d(s(g))
\end{align*}
giving \eqref{eq:dn-new}.  Multiplying on the right by $n$ gives \eqref{eq:dn2}. Applying \eqref{eq:dn2} with $d=nn^*$  and evaluating at $x\in\dom(n)$ gives
\[
(n^*n(x))^2=n^*(nn^*)n(x)=nn^*(\alpha_n(x))n^*n(x).
\]
 Since $R$ is an integral domain, $x\in\dom(n)$ implies $(n^*n(x))^2\neq 0$, and then that $\alpha_n(x)\in\ran(n)$.
\end{proof}

\begin{example}\label{ex alpha n}
Consider the normalizer $\one_{Z(\mu,\nu)}$ of $D(E)$. Let $x\in \dom( \one_{Z(\mu,\nu)})=Z(\nu)$. Then $r(Z(\mu,\nu)x)=\mu\sigma^{|\nu|}(x)$. Thus 
\[\alpha_{\one_{Z(\mu,\nu)}}(x)=\mu\sigma^{|\nu|}(x).
\]
Now consider a normalizer $n=\sum_{(\mu,\nu)\in F} r_{\mu,\nu} \one_{Z(\mu,\nu)}$  in normal form. Let $x\in \dom(n)$. Then $x\in Z(\nu)$ for a unique $\nu_0$ and at least one pair $(\mu, \nu_0)\in F$.  By Proposition~\ref{prop:supp n}\eqref{it:lemsupp 2},  $r(\supp(n)x)$ is a singleton. Thus 
\[
\alpha_{n}(x)=\alpha_{\one_{Z(\mu,\nu_0)}}(x)
\]
for any $(\mu,\nu_0)\in F$.
\end{example}

We now prove that $\alpha$ is a partial action of the normalizers $N(E)$  on the infinite path space $E^\infty$. In the proof, we will use the following observation:
\begin{lemma}
\label{lem:range transfer}
Let  $G$ be a groupoid and $X, Y, Z\subset G$. Then
\[r(X r(YZ))=r(XYZ).
\]
\end{lemma}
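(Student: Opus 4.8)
The statement to prove is the identity $r(X\,r(YZ)) = r(XYZ)$ for subsets $X,Y,Z$ of a groupoid $G$. This is a purely set-theoretic fact about how the range map interacts with the partial multiplication, so the plan is to prove it by double inclusion, unwinding the definitions of $r$ and of the product of subsets given in the Preliminaries.

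First I would fix notation: recall that for subsets $B,C \subseteq G$ we have $BC = \{\beta\eta : s(\beta)=r(\eta),\ \beta\in B,\ \eta\in C\}$, and that $r(YZ) \subseteq \go$ consists of identity morphisms. The only subtlety is that $r(YZ)$ is a set of \emph{units} of $G$ (elements of $\go$), and when we form the product $X\,r(YZ)$ we are composing elements of $X$ with units; composing $\xi \in X$ with a unit $u$ requires $s(\xi) = r(u) = u$ (since $u$ is its own range), and the result is just $\xi$ itself. So $X\,r(YZ) = \{\xi \in X : s(\xi) \in r(YZ)\}$, and $r(X\,r(YZ))$ is the set of ranges of those $\xi\in X$ whose source equals the range of some composable pair from $Y$ and $Z$.

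For the forward inclusion $r(X\,r(YZ)) \subseteq r(XYZ)$, I would take an element of the left side, which has the form $r(\xi)$ for some $\xi \in X$ with $s(\xi) = r(\eta\zeta)$ where $\eta\in Y$, $\zeta\in Z$, $s(\eta)=r(\zeta)$. Since $s(\xi) = r(\eta\zeta) = r(\eta)$, the triple product $\xi\eta\zeta$ is defined and lies in $XYZ$, and $r(\xi\eta\zeta) = r(\xi)$, giving the containment. The reverse inclusion $r(XYZ) \subseteq r(X\,r(YZ))$ runs the same computation backwards: an element of the right side is $r(\xi\eta\zeta)$ with the three composability conditions $s(\xi)=r(\eta)$, $s(\eta)=r(\zeta)$ holding; then $\eta\zeta \in YZ$ so $r(\eta\zeta)=r(\eta)=s(\xi) \in r(YZ)$, whence $\xi \in X\,r(YZ)$ and $r(\xi) = r(\xi\eta\zeta)$ lands in the left side. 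I would present both inclusions compactly, perhaps even as a single chain of equivalences tracking the element $\xi$.

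The argument is elementary and I do not expect a genuine obstacle; the only thing to be careful about is the bookkeeping that $r(\eta\zeta) = r(\eta)$ whenever the product $\eta\zeta$ is defined (this follows from the groupoid axioms, since $r$ is a functor) and that composing with a unit on the right is the identity operation. The cleanest writeup treats the equality of the two sets by showing that both equal $\{\, r(\xi) : \xi\in X,\ \exists\,\eta\in Y,\ \zeta\in Z \text{ with } s(\xi)=r(\eta),\ s(\eta)=r(\zeta)\,\}$, so I would organize the proof around exhibiting this common description rather than two separate $\subseteq$ arguments.
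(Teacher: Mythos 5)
Your proof is correct: the paper states this lemma as a bare ``observation'' with no proof at all, and your element-wise verification --- noting that multiplying $\xi\in X$ on the right by a unit $u\in r(YZ)$ is defined precisely when $s(\xi)=u$ and then returns $\xi$, so that both sides equal $\{\,r(\xi): \xi\in X,\ \exists\,\eta\in Y,\ \zeta\in Z \text{ with } s(\xi)=r(\eta),\ s(\eta)=r(\zeta)\,\}$ --- is exactly the routine argument the authors leave to the reader.
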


\begin{prop}
\label{prop:alg act new}
Let $m,n\in N(E)$, and $d\in D(E)$.  Recall that for $x\in s(\supp(n))$, $\alpha_n(x):=r(\supp(n)x)$.
 \begin{enumerate}
 \item\label{it:algact new 1} We have $x\in \dom(mn)$ if and only if $x\in \dom(n)$ and  $\alpha_n(x)\in \dom(m)$, and then $\alpha_{m}\circ\alpha_n=\alpha_{mn}$.
 \item \label{it:algact new 3} If $x\in\supp(d)=\dom(d)$, then $\alpha_d(x)=x$.
 \item \label{it:algact new 4} The map $\alpha_n$ is a homeomorphism of $\dom(n)$ onto $\ran(n)=\dom(n^*)$ with $\alpha_n^{-1}=\alpha_{n^*}$.
 \end{enumerate}
 \end{prop}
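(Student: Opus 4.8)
The plan is to establish the three parts in the order \eqref{it:algact new 3}, \eqref{it:algact new 1}, \eqref{it:algact new 4}, since the last will be deduced from the first two. I first record a pointwise formula on the unit space: for $k\in N(E)$ and $x\in E^\infty$ one has $(k^*k)(x)=\sum_{g\in\supp(k),\,s(g)=x}\overline{k(g)}k(g)$, since the only $\eta$ with $r(\eta)=x$ contributing to the convolution defining $k^*k$ satisfy $\eta\inv\in\supp(k)$ and $s(\eta\inv)=x$. This yields two facts I will use repeatedly. First, because $R$ is an integral domain and the involution fixes $0$, a nonzero value of $(k^*k)(x)$ forces some $g\in\supp(k)$ with $s(g)=x$; hence $\dom(k)=\supp(k^*k)\subseteq s(\supp(k))$, so $\alpha_k$ is defined on all of $\dom(k)$. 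Second, writing $d\in D(E)$ in normal form gives $\supp(d^*d)=\supp(d)$, i.e.\ $\dom(d)=\supp(d)$, which is the equality asserted in \eqref{it:algact new 3}. Part \eqref{it:algact new 3} itself is then immediate: since $\supp(d)\subseteq\go_E$, for $x\in\supp(d)$ the only element of $\supp(d)x$ is $x$ itself, so $\alpha_d(x)=r(\{x\})=x$.

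The heart of the proof is \eqref{it:algact new 1}. I would pin down $\dom(mn)$ algebraically rather than through supports. Applying \eqref{eq:dn2} with $d=m^*m\in D(E)$ gives $(mn)^*(mn)=n^*(m^*m)n=\big((m^*m)\circ\alpha_n\big)\,n^*n$ as functions on $G_E$. Both factors on the right are supported in $\go_E$ (the first in $s(\supp(n))$, the second being in $D(E)$), so their convolution evaluated at a unit $x$ is the pointwise product: $(mn)^*(mn)(x)=(m^*m)(\alpha_n(x))\cdot(n^*n)(x)$ for $x\in s(\supp(n))$, and $0$ otherwise. Since $R$ is an integral domain, this product is nonzero exactly when $(n^*n)(x)\ne0$ and $(m^*m)(\alpha_n(x))\ne0$, that is, when $x\in\dom(n)$ and $\alpha_n(x)\in\dom(m)$; this is precisely the claimed description of $\dom(mn)$. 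For the identity $\alpha_{mn}=\alpha_m\circ\alpha_n$, take $x\in\dom(mn)$. Using that $r(\supp(n)x)=\{\alpha_n(x)\}$ and $r(\supp(m)\alpha_n(x))=\{\alpha_m(\alpha_n(x))\}$ are singletons, Lemma~\ref{lem:range transfer} with $X=\supp(m)$, $Y=\supp(n)$, $Z=\{x\}$ gives $r(\supp(m)\supp(n)x)=\{\alpha_m(\alpha_n(x))\}$. On the other hand $\supp(mn)\subseteq\supp(m)\supp(n)$, so $r(\supp(mn)x)\subseteq\{\alpha_m(\alpha_n(x))\}$; and since $x\in\dom(mn)\subseteq s(\supp(mn))$ and $mn\in N(E)$, Proposition~\ref{prop:supp n}\eqref{it:lemsupp 2} makes $r(\supp(mn)x)=\{\alpha_{mn}(x)\}$ a nonempty singleton, forcing $\alpha_{mn}(x)=\alpha_m(\alpha_n(x))$.

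For \eqref{it:algact new 4}, the equality $\ran(n)=\dom(n^*)$ is immediate from the definitions, as $(n^*)^*n^*=nn^*$. By Lemma~\ref{baby steps} we have $\alpha_n(\dom(n))\subseteq\ran(n)=\dom(n^*)$ and $\alpha_{n^*}(\dom(n^*))\subseteq\dom(n)$. Applying \eqref{it:algact new 1} with $m=n^*$, and using that $n^*n\in D(E)$ so that $\dom(n^*n)=\supp(n^*n)=\dom(n)$, shows that for $x\in\dom(n)$ one has $\alpha_{n^*}(\alpha_n(x))=\alpha_{n^*n}(x)$, which equals $x$ by \eqref{it:algact new 3}; the symmetric computation gives $\alpha_n\circ\alpha_{n^*}=\mathrm{id}$ on $\ran(n)$, so $\alpha_n$ is a bijection of $\dom(n)$ onto $\ran(n)$ with inverse $\alpha_{n^*}$. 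For continuity, Example~\ref{ex alpha n} shows that near any $x\in\dom(n)$, with $x\in Z(\nu_0)$ and $(\mu,\nu_0)\in F$, the map $\alpha_n$ agrees on the open set $Z(\nu_0)\cap\dom(n)$ with $y\mapsto\mu\sigma^{|\nu_0|}(y)$, which is continuous; hence $\alpha_n$, and by the same token its inverse $\alpha_{n^*}$, is continuous, so $\alpha_n$ is a homeomorphism. I expect the main obstacle to be the opening move of \eqref{it:algact new 1}: because $\dom(n)$ can be strictly smaller than $s(\supp(n))$ (cancellation among edge-lengths can make $n^*n$ vanish at a point even when $\supp(n)$ meets that fibre), one cannot argue with supports alone, and the domain computation must be routed through the algebraic identity \eqref{eq:dn2} together with the integral-domain hypothesis.
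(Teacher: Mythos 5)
Your proof is correct, and on the load-bearing steps it coincides with the paper's: the characterization of $\dom(mn)$ comes from \eqref{eq:dn2} with $d=m^*m$ together with the integral-domain hypothesis, exactly as in the paper, and your part \eqref{it:algact new 4} (inverse via parts \eqref{it:algact new 1} and \eqref{it:algact new 3} applied to $n^*n$ and $nn^*$, continuity via the local description of $\alpha_n$ from Example~\ref{ex alpha n}) is the paper's argument in mildly different clothing, since the paper phrases continuity with convergent sequences. Where you genuinely depart is the identity $\alpha_{mn}=\alpha_m\circ\alpha_n$: the paper writes $m$ and $n$ in normal form, picks pairs $(\mu_0,\nu_0)\in F$ and $(\mu_0',\nu_0')\in F'$ through $x$ and $\alpha_n(x)$, and asserts that the corresponding summand $\one_{Z(\mu_0',\nu_0')Z(\mu_0,\nu_0)}$ of $mn$ is nonzero before applying Lemma~\ref{lem:range transfer}; you avoid normal forms altogether, combining $\supp(mn)\subseteq\supp(m)\supp(n)$ with Lemma~\ref{lem:range transfer} and the singleton property of Proposition~\ref{prop:supp n}\eqref{it:lemsupp 2}, with nonemptiness of $\supp(mn)x$ supplied by your preliminary observation $\dom(k)\subseteq s(\supp(k))$, itself proved from the pointwise formula for $k^*k$ on units. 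Your version is arguably tighter: the paper's step leaves implicit why cancellation among the overlapping products $Z(\mu',\nu')Z(\mu,\nu)$ cannot kill $mn$ above $x$, whereas your containment argument needs only $x\in\dom(mn)$, which is already in hand at that point. Likewise your direct proof of part \eqref{it:algact new 3} from $\supp(d)\subseteq \go_E$ — so that $\supp(d)x=\{x\}$ — is a small simplification over the paper's detour through Example~\ref{ex alpha n}, and your closing caveat that $\dom(n)$ may be strictly smaller than $s(\supp(n))$ is well taken: it is precisely why the domain computation must be routed through \eqref{eq:dn2} rather than through supports, as both you and the paper do.
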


 \begin{proof}\eqref{it:algact new 1} As in Lemma~\ref{baby steps}, we consider the extension of $m^*m\circ \alpha_n$ to all of $G_E$ by zero. Then for all $x\in G_E^0$,  \eqref{eq:dn2} gives 
 \begin{equation*}\label{lineupsupports}
 (mn)^*(mn)(x)=n^*(m^*m)n(x)=m^*m(\alpha_n(x))n^*n(x)
 \end{equation*}
as elements of  $R$.  Now let $x\in\dom (n)$ and $\alpha_n(x)\in \dom(m)$.  Then $m^*m(\alpha_n(x))\neq 0$ and $n^*n(x)\neq 0$. Since $R$ is an integral domain, their product is nonzero also, and hence   $(mn)^*(mn)(x)\neq 0$. Thus $x\in\dom(mn)$.  Conversely, let $x\in \dom(mn)$. Then $(mn)^*(mn)(x)\neq 0$, and it follows that both $m^*m(\alpha_n(x))$ and $n^*n(x)$ are nonzero. Thus $x\in\dom (n)$ and $\alpha_n(x)\in \dom(m)$.
 
Now assume that $x\in\dom(mn)$. Then $x\in \dom(n)$ and $\alpha_n(x)\in \dom(m)$. To show that $\alpha_{m}\circ\alpha_n=\alpha_{mn}$, we need to show
  \[
  r(\supp(m)(\supp(n)x))=r(\supp(mn) x).
  \]  
 For this we compute with 
 \[
  n=\sum_{(\mu,\nu)\in F} r_{\mu,\nu} \one_{Z(\mu,\nu)}\quad\text{and}\quad m=\sum_{(\mu',\nu')\in F'} r_{\mu',\nu'} \one_{Z(\mu',\nu')}
 \]
in normal form, and   \[mn= \sum_{(\mu',\nu')\in F',(\mu,\nu)\in F}r_{\mu',\nu'} r_{\mu,\nu} \one_{Z(\mu',\nu')Z(\mu,\nu)}.
  \]
There exists $(\mu_0,\nu_0)\in F, (\mu'_0,\nu'_0)\in F'$ such that $x\in Z(\nu_0)$, $\alpha_n(x)=r(Z(\mu_0,\nu_0)x)$, $\alpha_n(x)\in Z(\nu_0')$, and 
  \[
  \alpha_{m}(\alpha_n(x))=r(Z(\mu_0',\nu_0')\alpha_n(x))=r(Z(\mu_0',\nu_0')(r(Z(\mu_0,\nu_0)x))).
  \]
  But $r_{\mu_0',\nu_0'} r_{\mu_0,\nu_0} \one_{Z(\mu_0',\nu_0')Z(\mu_0,\nu_0)}$ is a nonzero summand  in $mn$ with $x\in s(Z(\mu_0',\nu_0')Z(\mu_0,\nu_0))$. Thus $\alpha_{mn}(x)=r(Z(\mu_0',\nu_0')Z(\mu_0,\nu_0)x)$ by the definition of $\alpha$.  By Lemma~\ref{lem:range transfer} we have
\[
  \alpha_{m}(\alpha_n(x))=r(Z(\mu_0',\nu_0')(r(Z(\mu_0,\nu_0)x)))=r(Z(\mu_0',\nu_0')Z(\mu_0,\nu_0)x)=\alpha_{mn}(x).
\]

\eqref{it:algact new 3} Let $d=\sum_{\mu\in F} r_\mu \one_{Z(\mu)}\in D(E)$ in normal form. Then $x\in\supp(d^*d)$ implies $x\in Z(\mu)$ for some $\mu\in F$. Now $\alpha_d(x)=\alpha_{\one_{Z(\mu,\mu)}}(x)=\mu\sigma^{|\mu|}(x)=x$ by Example~\ref{ex alpha n}.
  
\eqref{it:algact new 4} Since $nn^*$ and $n^*n\in D$, we have $\alpha_n\circ\alpha_{n^*}(x)=\alpha_{nn^*}(x)=x$ by \eqref{it:algact new 1} and \eqref{it:algact new 3}. Similarly, $\alpha_{n^*}\circ\alpha_{n}(x)=x$, and hence $\alpha_n:\dom(n)\to\ran(n)=\dom(n^*)$ is invertible with $\alpha_n^{-1}=\alpha_{n^*}$.  

By symmetry, it suffices to show that $\alpha_n$ is continuous (for then so is $\alpha_{n^*}=\alpha_{n}^{-1}$). Take $n$ in normal form as above.    Suppose that $x_k\to x$ in $\dom(n)$.  There exists $(\mu_0, \nu_0)\in F$ such that $x=\nu_0x'\in Z(\nu_0)$.  Eventually, $x_k\in Z(\nu_0)$, that is, $x_k=\nu_0x_k'$. Then $x_k'\to x'$. By Example~\ref{ex alpha n} we have
\[
\alpha_n(x_k)=\mu_0x_k'\to\mu_0x'=\alpha_n(x).
\]
Thus $\alpha_n$ is continuous.  It follows that $\alpha_n$ is a homeomorphism.
 \end{proof}

We will construct the algebraic analogue of the extended Weyl groupoid in \S\ref{secWeyl}. This construction  uses the partial action $\alpha_n$, and we will have to work separately with isolated points in $E^\infty$.  If $x\in E^\infty$ is isolated, then there exists $k\in\N$ such that $Z(x(0,k))=\{x\}$ (and then $Z(x(0,l))=\{x\}$ for all $l\geq k$).  We write $p_x$ for $\one_{Z(x(0,k))}= \one_{\{x\}}$.  For example, if  $x$ satisfies $r^{-1}(r(x_i))=\{x_i\}$ for eventually all $i$, then $x$ is isolated in $E^\infty$. In particular, if $\eta$ is a cycle without entry, then $\tau\eta\eta\eta...$ is isolated for any $\tau\in P(E)$ with $s(\tau)=r(\eta)$.

\begin{lemma}
\label{lem:isolated}
Let $x\in E^\infty$ be an isolated path and let $n\in N(E)$. 
\begin{enumerate}
\item \label{it:iso1} If $p_xnp_x$ is nonzero, then $p_xnp_x = r\one_{\{(x,k,x)\}}$ for some $k\in\Z$ and $r\in R$.
\item \label{it:iso3} $p_xnp_x\in N(E)$.
\item \label{it:iso4} $p_xnp_x$ is  homogeneous. If $x=\tau\eta\eta\cdots$ where $\eta$ is a cycle without entry,
 then the degree of $p_xnp_x$  is a multiple of the length of $\eta$; otherwise the degree is $0$.
\end{enumerate}
\end{lemma}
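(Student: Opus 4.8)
The plan is to dispatch (2) first since it is immediate, then bootstrap it to prove (1) by a degree argument, and finally read off the degree in (3) from the normal form of $n$ together with the periodic structure that isolation of $x$ forces. Part~(2) is a one-liner: $p_x=\one_{\{x\}}$ lies in $D(E)\subseteq N(E)$, and $N(E)$ is closed under products (as recorded just before the statement), so $p_xnp_x\in N(E)$.

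For (1), I would begin by locating $\supp(p_xnp_x)$. Since $\supp(f*g)\subseteq\supp(f)\supp(g)$ and $\supp(p_x)=\{(x,0,x)\}$, any $g$ in the support must satisfy $r(g)=s(g)=x$, hence $g=(x,k,x)$ for some $k\in\Z$. Thus
\[
m:=p_xnp_x=\sum_{k\in\Z} c_k\,\one_{\{(x,k,x)\}}
\]
for scalars $c_k\in R$, only finitely many nonzero. To see that a single $c_k$ survives, I use (2): $m\in N(E)$ forces $mm^*\in D(E)$, and every element of $D(E)$ is supported on the unit space $\go$. A direct convolution gives $mm^*=\sum_{j}\big(\sum_k c_k\overline{c_{k-j}}\big)\one_{\{(x,j,x)\}}$, so the off-diagonal coefficients must vanish: $\sum_k c_k\overline{c_{k-j}}=0$ for every $j\neq0$. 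Taking $j$ to be the difference between the largest and smallest indices $k$ with $c_k\neq0$, the sum collapses to the single product $c_{k_{\max}}\overline{c_{k_{\min}}}$, which is nonzero in the integral domain $R$ unless $k_{\max}=k_{\min}$. Hence $m=r\one_{\{(x,k,x)\}}$ for some $k$, proving (1).

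For (3), homogeneity is immediate from (1). To identify the degree $k$, write $n$ in normal form $n=\sum_{(\mu,\nu)\in F}r_{\mu,\nu}\one_{Z(\mu,\nu)}$ and observe, using the product formula~\eqref{it:bp1} together with $p_x=\one_{\{x\}}$, that the only pairs contributing to $p_xnp_x$ are those for which $\mu$ and $\nu$ are both initial segments of $x$ with $\sigma^{|\mu|}(x)=\sigma^{|\nu|}(x)$; such a pair contributes in degree $|\mu|-|\nu|$. If $x=\tau\eta\eta\cdots$ with $\eta$ a cycle without entry, then $\eta$ is simple and hence primitive, so the tail $\eta\eta\cdots$ has minimal period exactly $|\eta|$; then $\sigma^{|\mu|}(x)=\sigma^{|\nu|}(x)$ forces $|\eta|$ to divide $|\mu|-|\nu|$, so the degree is a multiple of $|\eta|$. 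In the remaining case I claim $x$ is not eventually periodic, whence no contributing pair has $|\mu|\neq|\nu|$ and the degree is $0$.

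This last claim is where the real work lies: I must show that an isolated path which is eventually periodic necessarily has the form $\tau\eta\eta\cdots$ for a cycle $\eta$ without entry. The key observation is that isolation of $x$ is equivalent to $r\inv(r(x_i))=\{x_i\}$ for all large $i$ (the forward continuation is unique precisely when each forward vertex has in-degree one, using that the graph is row-finite with no sources). If in addition $x$ is eventually periodic, then every vertex on the periodic tail has in-degree one, and this forces the minimal period to be a simple cycle with no entrance. I expect this structural step --- reconciling the range/source conventions and extracting a genuine cycle without entry from eventual in-degree one --- to be the main obstacle; the algebraic parts (1) and (2) are comparatively routine.
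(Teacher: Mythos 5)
Your proposal is correct, and its route differs from the paper's in two genuine ways. The paper proves (1) first by a normal-form computation: it shows that the set $Q=\set{\mu : (\mu, x(0,l))\in F,\ \mu = x(0,|\mu|)}$ of surviving summands is a singleton, because the unique summand of maximal degree in the diagonal (hence $0$-graded) element $p_xnp_xn^*p_x$ has nothing to cancel against over an integral domain; it then deduces (2) from (1) (a scalar multiple of $\one_{Z(\beta,\gamma)}$ is a normalizer) and reads (3) off $(x,|\beta|-|\gamma|,x)\in G_E$. You instead prove (2) directly from the closure properties of $N(E)$ recorded after its definition, which is simpler and makes (2) available as input to (1); and your proof of (1) replaces the normal form by an intrinsic argument in the corner at $x$: since $\supp(p_x)=\{(x,0,x)\}$, the element $m=p_xnp_x$ is supported in the isotropy $\{(x,k,x): x\sim_k x\}$, and vanishing of the off-diagonal coefficient of $mm^*\in D(E)$ at $j=k_{\max}-k_{\min}$ collapses the autocorrelation sum to the single product $c_{k_{\max}}\overline{c_{k_{\min}}}$, which is nonzero in an integral domain unless $k_{\max}=k_{\min}$. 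This is the same underlying cancellation mechanism as the paper's (an extremal degree cannot cancel) but avoids normal forms and is arguably cleaner; the one detail you should make explicit is that $\supp(m)$ is finite, which is immediate because supports in $A_R(G_E)$ are compact and each singleton $\{(x,k,x)\}$ is open since $x$ is isolated. For (3), your treatment is actually more complete than the paper's: where the paper simply asserts ``Otherwise, $x$ is not periodic,'' you isolate the structural fact this hides --- isolation of $x$ is equivalent to $r^{-1}(r(x_i))=\{x_i\}$ for all large $i$ (the converse direction uses ``no sources'' to extend any deviating edge to an infinite path), so along an eventually periodic tail the in-degree-one condition both rules out entrances and, via determinism of the continuation, forces the first vertex repetition to close up into a simple (hence primitive) cycle. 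Your sketch of this step is correct and completes in a few lines, so what you flag as the main obstacle is a feature of your write-up rather than a gap.
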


\begin{proof}

For \eqref{it:iso1}, write
\[
n=\sum_{(\mu,\nu)\in F} r_{\mu,\nu} \one_{Z(\mu,\nu)}
\]
in normal form.  Choose  $M\geq \max\{|\mu|, |\nu|:(\mu,\nu)\in F\}$ such that  $Z(x(0,M))=\{x\}$.  Then $p_x=\one_{Z(x(0,M))}$ and
\[
p_xnp_x=\sum_{(\mu,\nu)\in F} r_{\mu,\nu} \one_{Z(x(0,M))Z(\mu,\nu)Z(x(0,M))}.
\]
By \eqref{it:bp1},  a summand
$r_{\mu,\nu} \one_{Z(x(0,M))Z(\mu,\nu)Z(x(0,M))}\neq 0$ if and only if $\mu=x(0,|\mu|)$ and $\nu=x(0,|\nu|)$. But all the $\nu$ have the same length $l$, and with $Q=\set{\mu: (\mu, x(0,l))\in F, \mu=x(0,|\mu|)}$ we have 
\begin{equation}
\label{eq:pnp}
p_xnp_x=\sum_{\mu\in Q} r_{\mu,x(0,l)} \one_{Z(x(0,M))Z(\mu,x(0,l))Z(x(0,M))}.
\end{equation}
If $\mu,\mu'\in Q$  with $|\mu|=|\mu'|$ then $\mu=\mu'$.  Since $Q$ is finite there exists a unique $\mu_{\min}$ of smallest length and a unique $\mu_{\max}$ of largest length. Since $n$ is a normalizer $p_xnp_xn^*p_x$ is in the diagonal, and is therefore homogeneous of degree zero.  Now
\begin{equation}
p_xnp_xn^*p_x
=\sum_{\mu,\mu'\in Q} r_{\mu,x(0,l)}\overline{r_{\mu',x(0,l)}} \one_{Z(x(0,M))Z(\mu,x(0,l))Z(x(0,M))Z(x(0,M),\mu')Z(x(0,M))}.\label{eq:diagele}
\end{equation}
Consider the summand \[ r_{\mu_{\max},x(0,l)}\overline{r_{\mu_{\min},x(0,l)}} \one_{Z(x(0,M))Z(\mu_{\max},x(0,l))Z(x(0,M))Z(x(0,M),\mu_{\min})Z(x(0,M))}\] in the above sum: it is nonzero since $R$ is an integral ideal domain and \[
Z(x(0,M))Z(\mu_{\max},x(0,l))Z(x(0,M))Z(x(0,M),\mu_{\min})Z(x(0,M))\neq \emptyset,\]  and it  has degree $|\mu_{\max}|-|\mu_{\min}|$.  By definition of $\mu_{\max}$ and $\mu_{\min}$, it is the unique summand in \eqref{eq:diagele} of maximal degree.  So if $|\mu_{\max}|-|\mu_{\min}|\neq 0$, then there is no other summand in \eqref{eq:diagele} to cancel with it. Thus   $\mu_{\max}=\mu_{\min}$, and now $Q$ is a singleton. Now we apply \eqref{it:bp1} to the
single summand of (\ref{eq:pnp}) to get  $p_xnp_x=r\one_{Z(\beta, \gamma)}$ for some $\beta, \gamma \in P(E)$ and $r\in R$.  Now 
\[Z(\beta, \gamma)=\supp(p_xnp_x) \subset \{x\}\supp(n)\{x\} \subset \{x\}G_E\{x\}.\] 
Since $r$ and $s$ restricted to $Z(\beta, \gamma)$ are injective,
 $Z(\beta, \gamma)= \{(x,|\beta|-|\gamma|, x)\}$.  This gives \eqref{it:iso1} with $k= |\beta|-|\gamma|$.

For \eqref{it:iso3}, $p_xnp_x=r\one_{Z(\beta, \gamma)}$ is a normalizer  by Example~\ref{ex alpha n}.
 
For \eqref{it:iso4}, $p_xnp_x=r\one_{Z(\beta, \gamma)}$ is homogeneous of degree $|\beta|-|\gamma|$. Since $(x,|\beta|-|\gamma|, x)\in G_E$ we have 
$\sigma^{|\beta|}(x) = \sigma^{|\gamma|}(x)$.
If  $x=\tau\eta\eta\cdots$ where $\eta$ is a cycle without entry, then $|\beta|-|\gamma|$ must be a multiple of the length of $\eta$.
Otherwise, $x$ is not periodic, and then $|\beta|-|\gamma|=0$.
\end{proof}

The  following lemma is the analogue  of Lemma~4.2 in \cite{BCW}.

\begin{lemma}
\label{lem:alphapx new}
 Let $n\in N(E)$ and let $x\in \dom(n)$ be an isolated path.
Then
\begin{enumerate}
\item\label{it:px1} $np_xn^*=nn^*p_{\alpha_n(x)}$ and 
\item\label{it:px3} $np_x=p_{\alpha_n(x)}n$.
\end{enumerate}
\end{lemma}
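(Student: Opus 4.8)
The plan is to establish \eqref{it:px3} first, by identifying both sides as functions on $G_E$, and then to read off \eqref{it:px1} from it using commutativity of the diagonal. Before starting I note that $p_{\alpha_n(x)}$ is meaningful: since $x\in\dom(n)$ is isolated, $\{x\}$ is open in the open set $\dom(n)$, and $\alpha_n$ is a homeomorphism of $\dom(n)$ onto $\ran(n)$ by Proposition~\ref{prop:alg act new}\eqref{it:algact new 4}; hence $\{\alpha_n(x)\}$ is open in $\ran(n)$, and so in $E^\infty$, i.e.\ $\alpha_n(x)$ is isolated.

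For \eqref{it:px3}, recall that $p_x=\one_{\{(x,0,x)\}}$ and $p_{\alpha_n(x)}=\one_{\{(\alpha_n(x),0,\alpha_n(x))\}}$ are characteristic functions of single units. A one-line convolution computation (convolving by the indicator of a unit on the right, resp.\ left, simply restricts the source, resp.\ range) gives, for every $g\in G_E$, that $(np_x)(g)=n(g)$ if $s(g)=x$ and $0$ otherwise, while $(p_{\alpha_n(x)}n)(g)=n(g)$ if $r(g)=\alpha_n(x)$ and $0$ otherwise. Thus both functions are obtained from $n$ by restriction, so to prove they coincide it suffices to show that for $g\in\supp(n)$,
\[
s(g)=x \quad\Longleftrightarrow\quad r(g)=\alpha_n(x).
\]
The forward implication is immediate from Proposition~\ref{prop:supp n}\eqref{it:lemsupp 2}: if $s(g)=x$ then $g\in\supp(n)x$, so $r(g)\in r(\supp(n)x)=\{\alpha_n(x)\}$. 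For the converse, the set $\supp(n)x$ is nonempty since $\alpha_n(x)=r(\supp(n)x)$ is defined for $x\in\dom(n)$, so I may fix $h\in\supp(n)$ with $s(h)=x$, and then $r(h)=\alpha_n(x)$ by the forward direction. If now $g\in\supp(n)$ with $r(g)=\alpha_n(x)=r(h)$, then $g\inv h\in\supp(n)\inv\supp(n)\subset\Iso(G_E)$ by Proposition~\ref{prop:supp n}\eqref{it:lemsupp 1}, whence $s(g)=r(g\inv h)=s(g\inv h)=s(h)=x$. This proves the equivalence and hence $np_x=p_{\alpha_n(x)}n$.

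Finally, I obtain \eqref{it:px1} by multiplying \eqref{it:px3} on the right by $n^*$: this gives $np_xn^*=p_{\alpha_n(x)}nn^*$, and since $nn^*\in D(E)$ and $p_{\alpha_n(x)}\in D(E)$ with $D(E)$ commutative, the right-hand side equals $nn^*p_{\alpha_n(x)}$, as required. The only genuine content lies in the set equality in \eqref{it:px3}, and specifically in its converse inclusion, which is exactly where the isotropy containment of Proposition~\ref{prop:supp n}\eqref{it:lemsupp 1} is needed; the remaining steps are formal.
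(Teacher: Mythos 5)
Your proof is correct. For part \eqref{it:px3} it follows essentially the paper's route: the same convolution computation reduces the identity to the equivalence $s(g)=x \Leftrightarrow r(g)=\alpha_n(x)$ for $g\in\supp(n)$, with the forward direction read off from Proposition~\ref{prop:supp n}\eqref{it:lemsupp 2}. On the converse direction you are actually more careful than the paper, which disposes of it with the terse remark that $r(g)=\alpha_n(x)$ forces $s(g)=x$ ``by definition of $\alpha_n$''; your justification --- fix $h\in\supp(n)$ with $s(h)=x$ (nonempty since $x\in\dom(n)$), then use $g\inv h\in\supp(n)\inv\supp(n)\subset\Iso(G_E)$ from Proposition~\ref{prop:supp n}\eqref{it:lemsupp 1} --- is exactly the mechanism that makes the paper's remark true, and spelling it out is an improvement. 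Where you genuinely diverge is \eqref{it:px1}: the paper proves it first and independently, evaluating $np_xn^*$ pointwise at $y\in\dom(n^*)$ via the formula $n^*dn=(d\circ\alpha_n)n^*n$ of \eqref{eq:dn2} applied with $n$ replaced by $n^*$, whereas you deduce it formally from \eqref{it:px3} by multiplying on the right by $n^*$ and commuting $p_{\alpha_n(x)}$ past $nn^*$ inside the commutative algebra $D(E)$. Both derivations are one line, but yours exhibits \eqref{it:px1} as a consequence of \eqref{it:px3} rather than a parallel computation, and your preliminary check that $\alpha_n(x)$ is isolated (via the homeomorphism property in Proposition~\ref{prop:alg act new}\eqref{it:algact new 4}), so that $p_{\alpha_n(x)}$ is even defined, is a point the paper leaves implicit.
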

\begin{proof}
For \eqref{it:px1}, let $y\in\dom(n^*)$ and use \eqref{eq:dn2} with $d=p_x$ to get
\begin{align*}
np_xn^*(y)
&=p_x(\alpha_{n^*}(y))nn^*(y)\\
&=\begin{cases}nn^*(y)&\text{if $x=\alpha_n(y)$}\\
0&\text{else}
\end{cases}\\
&=p_{\alpha_n(x)}(y)nn^*(y)\\
&=[p_{\alpha_n(x)}nn^*](y).
\end{align*}
Thus $np_xn^*=nn^*p_{\alpha_n(x)}$. 
For \eqref{it:px3}, we compute the convolutions to see that
\begin{gather*}np_x(g)=\sum_{r(h)=r(g)}n(h)p_x(h^{-1}g)=n(g)p_x(s(g))
\\
p_{\alpha_n(x)}n(g)=\sum_{r(h)=r(g)}p_{\alpha_n(x)}(h)n(h^{-1}g)=n(g)p_{\alpha_n(x)}(r(g)).
\end{gather*}
Consider $g\in\supp(n)$. If $x=s(g)$, then $r(g)=r(\supp(n)x)$, that is, $r(g)=\alpha_n(x)$. On the other hand, if $r(g)=\alpha_n(x)$, then $r(g)=r(\supp(n)x)$ by definition of $\alpha_n$, and hence $x=s(g)$. Thus $np_x(g)=p_{\alpha_n(x)}n(g)$, and \eqref{it:px3} follows.
\end{proof}

\begin{lemma}\label{jackpot} 
Let $a\in L_R(E)$ be $0$-graded and let $x$ be an isolated path in $E^\infty$. Then $p_xap_x=rp_x$ for some $r\in R$.
\end{lemma}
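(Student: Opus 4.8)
The plan is to argue entirely in the Steinberg-algebra model and to pin down $p_xap_x$ by controlling its support. Since $x$ is isolated, $p_x=\one_{\{x\}}$, and under the identification of $E^\infty$ with the unit space of $G_E$ this is the function $\one_{\{(x,0,x)\}}$; in particular $\supp(p_x)=\{(x,0,x)\}$. Because $a$ is $0$-graded and each $\one_{Z(\mu,\nu)}$ is homogeneous of degree $|\mu|-|\nu|$, I would write $a=\sum_{(\mu,\nu)\in F}r_{\mu,\nu}\one_{Z(\mu,\nu)}$ with $|\mu|=|\nu|$ for every $(\mu,\nu)\in F$. Each such $Z(\mu,\nu)=\{(\mu z,|\mu|-|\nu|,\nu z):z\in s(\mu)E^\infty\}$ then consists only of triples of the form $(z,0,w)$, so every element of $\supp(a)$ has zero $\Z$-coordinate.

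Next I would apply the support estimate $\supp(f*g)\subset\supp(f)\supp(g)$ twice to obtain
\[
\supp(p_xap_x)\subset\{(x,0,x)\}\,\supp(a)\,\{(x,0,x)\}.
\]
Any element of the right-hand side is a composite $(x,0,x)\,\gamma\,(x,0,x)$ with $\gamma\in\supp(a)$, which is defined only when $r(\gamma)=x=s(\gamma)$. Writing $\gamma=(z,0,w)$ as in the previous paragraph, the conditions $r(\gamma)=z=x$ and $s(\gamma)=w=x$ force $\gamma=(x,0,x)$, and then the composite is again $(x,0,x)$. Hence $\supp(p_xap_x)\subset\{(x,0,x)\}$.

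Finally, any function in $L_R(E)=A_R(G_E)$ whose support is contained in the single point $(x,0,x)$ equals $r\one_{\{(x,0,x)\}}$ for $r=(p_xap_x)(x,0,x)\in R$; since $\one_{\{(x,0,x)\}}=p_x$ this gives $p_xap_x=rp_x$, as required.

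I do not expect a real obstacle: the only work is bookkeeping, and the hypothesis that $a$ is $0$-graded is used exactly to exclude the isotropy elements $(x,k,x)$ with $k\neq 0$ that would otherwise survive the compression (compare Lemma~\ref{lem:isolated}\eqref{it:iso1}). The one point to state carefully is that a degree-$0$ homogeneous element admits a representation $\sum r_{\mu,\nu}\one_{Z(\mu,\nu)}$ with all $|\mu|=|\nu|$. Alternatively, one can avoid this by applying Lemma~\ref{lem:isolated} termwise: each $\one_{Z(\mu,\nu)}$ appearing in $a$ is a normalizer of degree $0$, so $p_x\one_{Z(\mu,\nu)}p_x$ is, by Lemma~\ref{lem:isolated}\eqref{it:iso1} and \eqref{it:iso4}, a scalar multiple of $\one_{\{(x,0,x)\}}=p_x$, and summing these gives $p_xap_x=rp_x$.
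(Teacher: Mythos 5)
Your proof is correct and takes essentially the same route as the paper: the paper likewise works in the Steinberg model, writes $a$ in normal form with $|\mu|=|\nu|$ for every pair (citing \cite[Lemma~3.5]{CFST} for exactly the representation fact you flag), and observes that compressing by $p_x=\one_{\{(x,0,x)\}}$ annihilates every term except possibly the one with $(x,0,x)\in Z(\mu,\nu)$ --- your support estimate $\supp(p_xap_x)\subset\{(x,0,x)\}\supp(a)\{(x,0,x)\}$ is a repackaging of that termwise computation. One small caveat: your ``alternative'' does not actually avoid the degree-zero representation fact, since asserting that each $\one_{Z(\mu,\nu)}$ appearing in $a$ is a normalizer of degree $0$ is precisely the claim that $|\mu|=|\nu|$ in every term.
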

\begin{proof}
Write $a= \sum_{(\mu,\nu)\in F}  r_{\mu,\nu}\one_{Z(\mu, \nu)}$ in normal form.
Since $a$ is zero graded, for each $(\mu,\nu) \in F$ we have $|\mu|=|\nu|$ (see \cite[Lemma~3.5]{CFST}).  
Then
\[
 p_xap_x = \sum_{(\mu,\nu)\in F}  r_{\mu,\nu}\one_{\{x\}Z(\mu, \nu)\{x\}}.\]
Because the elements of $F$ are mutually disjoint, there is at most one $(\mu, \nu) \in F$ such that
$(x,0,x) \in Z(\mu,\nu)$.  Thus either
$p_xap_x = r_{\mu,\nu}p_x$ or $p_xap_x=0=0p_x$.
\end{proof}


\section{The algebraic Weyl Groupoid}\label{secWeyl}
Throughout this section $E$ is a row-finite graph with no sources and $R$ is an integral domain with identity. 
We use the partial action $\alpha$ from \S\ref{sec-normalizers} to construct a groupoid associated to the inclusion of the diagonal $D(E)$ in  $L_R(E)$.  This gives an algebraic version of the ``extended Weyl groupoid'' of \cite{BCW}.

For the next definition, recall that if  $x\in E^\infty$ is isolated  and $n\in N(E)$ then  $p_xnp_x$ is homogeneous by Lemma~\ref{lem:isolated}.  

\begin{definition}
\label{def:eqrel}
We define a relation $\sim$ on 
\[\{(n,x): n\in N(E), x\in \dom(n)\}\]
by  $(n,x)\sim (m,x')$ if $x=x'$ and either
\begin{enumerate}
\item \label{eqcondiso} $x$ is an isolated path in $E^\infty$, $\alpha_n(x)=\alpha_{m}(x)$ and the degree of $p_xn^*mp_x$ is $0$; or
\item \label{eqcondnoniso} $x$ is not an  isolated path in $E^\infty$ and there exists  an open neighbourhood $V\subset \dom(n)\cap \dom(m)$ of $x$ such that $\alpha_n(y)=\alpha_{m}(y)$ for all $y\in V$.
\end{enumerate}
\end{definition}

\begin{remark}\label{rem:eqrel}
Item~\eqref{eqcondnoniso} says that $\alpha_n$ and $\alpha_{m}$ agree as germs: this is exactly the condition used in \cite[Definition~4.2]{Ren} and is  condition~(b) in \cite[Proposition~4.6]{BCW}.  Condition~\eqref{eqcondiso} is {\em a priori } different from condition~(a) of  \cite[Proposition~4.6]{BCW}. Condition~(a) requires the $K_1$ class of a  unitary $U_{x, n, m}:= cp_xn^*mp_x$ (for the constant $c$ which makes $U$ a unitary)  to be zero.  However,  $p_x C^*(E) p_x$ is Morita equivalent to either $C(\mathbb{T})$ or $\C$ and $K_1(C(\mathbb{T}))\cong \Z$ by an isomorphism sending $z^n$ to $n$.  A computation now shows the $K_1$ class of the unitary $U_{x,n,m}$ is precisely the degree of $p_xn^*mp_x$.
\end{remark}

\begin{lemma}\label{lem-equivalence relation} 
The relation in Definition~\ref{def:eqrel} is an equivalence relation.
\end{lemma}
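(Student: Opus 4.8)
The plan is to verify reflexivity, symmetry and transitivity of $\sim$. Since whether $x$ is isolated is a property of $x$ alone, and $(n,x)\sim(m,x')$ forces $x=x'$, any two pairs that are related fall under the same clause of Definition~\ref{def:eqrel}. So I would treat the isolated and non-isolated cases separately and never have to mix them.

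The non-isolated case is the easy one, where $\sim$ simply says $\alpha_n$ and $\alpha_m$ agree as germs at $x$. For reflexivity I would take $V=\dom(n)$, which is an open neighbourhood of $x$ because $\supp(n^*n)$ is compact open in $G_E$ and $E^\infty$ is open in $G_E$; then $\alpha_n=\alpha_n$ on $V$ trivially. Symmetry is immediate from the symmetry of the defining condition, and transitivity follows by intersecting the two witnessing neighbourhoods: if $\alpha_n=\alpha_m$ on $V_1$ and $\alpha_m=\alpha_l$ on $V_2$, then $V_1\cap V_2$ is an open neighbourhood of $x$ contained in $\dom(n)\cap\dom(l)$ on which $\alpha_n=\alpha_l$.

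For the isolated case, reflexivity and symmetry are still short. For reflexivity, $n^*n\in D(E)$ is $0$-graded, so Lemma~\ref{jackpot} gives $p_xn^*np_x=rp_x$, which has degree $0$ (and $\alpha_n(x)=\alpha_n(x)$ trivially). For symmetry, note $p_x^*=p_x$, so $(p_xn^*mp_x)^*=p_xm^*np_x$; since the involution on $L_R(E)$ carries $\one_{Z(\mu,\nu)}$ to $\one_{Z(\nu,\mu)}$ and hence sends a homogeneous element of degree $k$ to one of degree $-k$, the degree of $p_xm^*np_x$ is $0$ whenever that of $p_xn^*mp_x$ is, while $\alpha_m(x)=\alpha_n(x)$ is symmetric.

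The hard part will be transitivity in the isolated case, which is really a bookkeeping statement about degrees. Suppose $(n,x)\sim(m,x)$ and $(m,x)\sim(l,x)$. First I would record that whenever $\alpha_n(x)=\alpha_m(x)$ the element $p_xn^*mp_x$ is nonzero: since $x\in\dom(m)$ and $\alpha_m(x)=\alpha_n(x)\in\ran(n)=\dom(n^*)$, Proposition~\ref{prop:alg act new}\eqref{it:algact new 1} gives $x\in\dom(n^*m)$ with $\alpha_{n^*m}(x)=\alpha_{n^*}(\alpha_m(x))=x$, so some $(x,k,x)\in\supp(n^*m)$ and hence $p_xn^*mp_x\neq0$. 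Combined with the degree-$0$ hypotheses and Lemma~\ref{lem:isolated}, this forces $p_xn^*mp_x=r_1p_x$ and $p_xm^*lp_x=r_2p_x$ with $r_1,r_2\neq0$. Now I would multiply these and simplify the middle factor: using $p_x^2=p_x$, then $mp_xm^*=mm^*p_{\alpha_m(x)}$ from Lemma~\ref{lem:alphapx new}\eqref{it:px1}, then $mm^*p_{\alpha_m(x)}=sp_{\alpha_m(x)}$ with $s=mm^*(\alpha_m(x))\neq0$ (as $\alpha_m(x)\in\ran(m)=\supp(mm^*)$), and finally $p_xn^*p_{\alpha_n(x)}=p_xn^*$ (obtained by taking adjoints in Lemma~\ref{lem:alphapx new}\eqref{it:px3} and multiplying by $p_x$), one gets
\[
r_1r_2\,p_x=(p_xn^*mp_x)(p_xm^*lp_x)=s\,p_xn^*lp_x .
\]
The right-hand side is nonzero since $r_1r_2\neq0$, so $p_xn^*lp_x\neq0$; as $n^*l\in N(E)$, Lemma~\ref{lem:isolated} gives $p_xn^*lp_x=r_3\one_{\{(x,k,x)\}}$, and comparing supports in the displayed identity forces $k=0$. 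Thus $p_xn^*lp_x$ has degree $0$, which together with $\alpha_n(x)=\alpha_l(x)$ yields $(n,x)\sim(l,x)$. The only genuinely delicate point is this degree computation; everything else is formal.
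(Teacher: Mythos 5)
Your proof is correct and follows essentially the same route as the paper: the only substantive point is transitivity at an isolated $x$, which both arguments settle by multiplying the two $0$-graded corner elements and collapsing the middle factor via $mp_xm^*=mm^*p_{\alpha_m(x)}$ (Lemma~\ref{lem:alphapx new}\eqref{it:px1}) so as to relate the product to $p_xn^*lp_x$ and compare degrees. Your preliminary observation that $\alpha_n(x)=\alpha_m(x)$ forces $p_xn^*mp_x\neq 0$ (via Proposition~\ref{prop:alg act new}\eqref{it:algact new 1}) is a worthwhile refinement: it makes the final degree comparison airtight, where the paper's gradedness deduction using \eqref{eq:dn-new} leaves the nonvanishing of the relevant products implicit.
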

\begin{proof}  That $\sim$ is an equivalence relation when restricted to  \[\{(n, x) : n \in  N(E), x \in \dom(n)\text{\ is not isolated}\}\] follows from the proof of \cite[Proposition~4.6]{BCW}. 

Let $x$ be an isolated path in $E^\infty$.  Everything is straightforward, except perhaps transitivity. So suppose that $(n_1, x)\sim (n_2,x)$ and $(n_2, x)\sim (n_3,x)$. Then $\alpha_{n_1}(x)=\alpha_{n_2}(x)$ and $p_xn_1^*n_2p_x$ is $0$-graded, and $\alpha_{n_2}(x)=\alpha_{n_3}(x)$ and $p_xn_2^*n_3p_x$ is $0$-graded. Thus $\alpha_{n_1}(x)=\alpha_{n_3}(x)$.  We compute, using Lemma~\ref{lem:alphapx new}\eqref{it:px1} and then \eqref{eq:dn-new}:
\begin{align}
(p_xn_1^*n_2p_x)(p_xn_2^*n_3p_x)
&=p_xn_1^*(n_2p_xn_2^*)n_3p_x\notag\\
&=p_xn_1^*(n_2n_2^*p_{\alpha_{n_2}(x)})n_3p_x \notag\\
&=(n_2n_2^*p_{\alpha_{n_2}(x)}\circ\alpha_{n_2n_2^*p_{\alpha_{n_2}(x)}})p_xn_1^*n_3p_x.\label{new-equation-6}
\end{align} 
Since $p_xn_1^*n_2p_x$ and $p_xn_2^*n_3p_x$ are both $0$-graded so is their product. Thus \eqref{new-equation-6} is $0$-graded. Since $n_2n_2^*p_{\alpha_{n_2}(x)}\circ\alpha_{n_2n_2^*p_{\alpha_{n_2}(x)}}\in D(E)$, it follows that $p_xn_1^*n_3p_x$ is $0$-graded as well. We have proved $(n_1, x)\sim (n_3,x)$.
\end{proof}

\begin{lemma}
\label{lem:eqrelond}
Let  $d, d'\in D(E)$ and let $x\in \supp(d)\cap \supp(d')$.
Then $(d,x)\sim (d',x)$.
\end{lemma}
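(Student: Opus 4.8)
The plan is to verify the two clauses of Definition~\ref{def:eqrel} directly, exploiting that elements of $D(E)$ act trivially under $\alpha$. First I would record the preliminaries. Since $D(E)$ is a $*$-subring containing local units, both $d$ and $d'$ lie in $N(E)$, so $(d,x)$ and $(d',x)$ make sense as soon as $x$ is in the respective domains. For a diagonal element we have $\dom(d)=\supp(d)$ --- this is the identification already used in Proposition~\ref{prop:alg act new}\eqref{it:algact new 3} --- so the hypothesis $x\in\supp(d)\cap\supp(d')$ gives $x\in\dom(d)\cap\dom(d')$, and both pairs lie in the domain of $\sim$. The key input is Proposition~\ref{prop:alg act new}\eqref{it:algact new 3}, which gives $\alpha_d(y)=y$ for every $y\in\dom(d)$ and, likewise, $\alpha_{d'}(y)=y$ for every $y\in\dom(d')$.

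Then I split into the two cases of Definition~\ref{def:eqrel}. If $x$ is not isolated, set $V=\dom(d)\cap\dom(d')=\supp(d)\cap\supp(d')$; this is open, since supports of Steinberg-algebra elements are compact open, and it contains $x$. For every $y\in V$ we have $\alpha_d(y)=y=\alpha_{d'}(y)$, which is exactly clause~\eqref{eqcondnoniso}.

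If $x$ is isolated, clause~\eqref{eqcondiso} asks for $\alpha_d(x)=\alpha_{d'}(x)$ together with the vanishing of the degree of $p_x d^* d' p_x$. The first condition is immediate from $\alpha_d(x)=x=\alpha_{d'}(x)$. For the degree condition I would observe that $d^* d'\in D(E)$, because $D(E)$ is closed under involution and multiplication, and that $D(E)$ is $0$-graded (each generator $\one_{Z(\mu)}=\one_{Z(\mu,\mu)}$ has degree $|\mu|-|\mu|=0$), so $d^* d'$ is $0$-graded. Lemma~\ref{jackpot} then yields $p_x d^* d' p_x=r\,p_x$ for some $r\in R$, and since $p_x$ is $0$-graded this element has degree $0$, completing clause~\eqref{eqcondiso}.

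I do not anticipate a genuine obstacle: the lemma is essentially a corollary of the fact that $\alpha$ fixes the diagonal pointwise. The only point needing a little care is the isolated case, where I must combine the $0$-gradedness of $D(E)$ with Lemma~\ref{jackpot} to control the degree of $p_x d^* d' p_x$; everything else reduces to $\alpha_d=\mathrm{id}$ on $\dom(d)$.
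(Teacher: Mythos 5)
Your proof is correct and takes essentially the same approach as the paper: the non-isolated case is exactly the paper's argument via Proposition~\ref{prop:alg act new}(\ref{it:algact new 3}), and the isolated case reaches the same conclusion. The paper handles the degree condition slightly more directly --- it simply observes that $p_xd^*d'p_x\in D(E)$ has support contained in $\go$ and hence degree $0$ --- whereas your detour through the $0$-gradedness of $d^*d'$ and Lemma~\ref{jackpot} is valid but not needed.
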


\begin{proof} First suppose that $x$ is not isolated. 
By Proposition~\ref{prop:alg act new}(\ref{it:algact new 3}),  $\alpha_d(y) =y= \alpha_{d'}(y)$ for all 
$y$ in the open neighbourhood $\supp(d)\cap \supp(d')$ of $x$. Thus $(d,x)\sim (d',x)$.

Second, suppose that $x$ is isolated. We still have $\alpha_d(x) = x=\alpha_{d'}(x)$.
Now $\supp (p_xd^*d'p_x) \subset \go$ implies that  $p_xd^*d'p_x$ has degree $0$.    
Thus again $(d,x)\sim (d',x)$.
\end{proof}

\begin{prop}\label{prop:Weyl}
Let $\sim$ be the equivalence relation on $\{(n, x) : n \in  N(E), x \in \dom(n)\}$ from Lemma~\ref{lem-equivalence relation}. Denote the collection of equivalence classes by $W_E$. 
Then $W_E$, equipped with the following structure, is a groupoid:
\begin{itemize}
\item partially-defined product $[(n_1, x_1)][(n_2, x_2)] := [(n_1n_2, x_2)]$  if $\alpha_{n_2}(x_2) = x_1$;
\item unit space $W_E^{(0)}:=\{[(d,x)]:d\in D(E), x\in \dom(d)\}$;
\item range and source maps $r,s:W_E\to W_E^{(0)}$ defined by
\[
r([(n,x)]):=[(nn^*,\alpha_n(x))]\text{\ and \ }s([(n,x)]):=[(n^*n,x)];
\]
\item inverse $[(n,x)]^{-1}:=[n^*,\alpha_n(x)]$.
\end{itemize}
\end{prop}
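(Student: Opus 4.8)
The plan is to verify in turn that the product, inverse, source and range descend to $\sim$-classes, and then to check the category axioms, leaning throughout on Proposition~\ref{prop:alg act new} (so that $\alpha$ is a genuine partial action) and on Lemmas~\ref{lem:isolated}, \ref{lem:alphapx new} and \ref{jackpot} to deal with isolated paths. First I would show the product is well defined. Suppose $(n_1,x_1)\sim(n_1',x_1)$ and $(n_2,x_2)\sim(n_2',x_2)$ with $\alpha_{n_2}(x_2)=x_1$. Since the partial actions agree at the base point in either clause of Definition~\ref{def:eqrel}, also $\alpha_{n_2'}(x_2)=x_1$, so the product on the primed side is defined, and $x_2\in\dom(n_1n_2)$ by Proposition~\ref{prop:alg act new}\eqref{it:algact new 1}. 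It remains to prove $(n_1n_2,x_2)\sim(n_1'n_2',x_2)$. If $x_2$ is not isolated then neither is $x_1=\alpha_{n_2}(x_2)$, as $\alpha_{n_2}$ is a homeomorphism by Proposition~\ref{prop:alg act new}\eqref{it:algact new 4}, and composing the two germ agreements via $\alpha_{n_1n_2}=\alpha_{n_1}\circ\alpha_{n_2}$ gives agreement of $\alpha_{n_1n_2}$ and $\alpha_{n_1'n_2'}$ near $x_2$. The isolated case is the crux: using $n_2p_{x_2}=p_{x_1}n_2$ and $n_2'p_{x_2}=p_{x_1}n_2'$ from Lemma~\ref{lem:alphapx new}\eqref{it:px3} together with their adjoints, I slide the projections inward to get
\[
p_{x_2}(n_1n_2)^*(n_1'n_2')p_{x_2}=n_2^*\bigl(p_{x_1}n_1^*n_1'p_{x_1}\bigr)n_2'.
\]
Because $p_{x_1}n_1^*n_1'p_{x_1}$ is $0$-graded by hypothesis, Lemma~\ref{lem:isolated}\eqref{it:iso1} makes it a scalar multiple of $p_{x_1}$, and feeding this back through the same projection identities collapses the right-hand side to a scalar multiple of $p_{x_2}n_2^*n_2'p_{x_2}$, which is $0$-graded by hypothesis. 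Hence the degree is $0$ and $(n_1n_2,x_2)\sim(n_1'n_2',x_2)$.

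Next I would handle the source, range and inverse. Since $n^*n$ and $nn^*$ lie in $D(E)$, if $(n,x)\sim(m,x)$ then $x\in\supp(n^*n)\cap\supp(m^*m)$ and $\alpha_n(x)=\alpha_m(x)\in\supp(nn^*)\cap\supp(mm^*)$, so Lemma~\ref{lem:eqrelond} gives $(n^*n,x)\sim(m^*m,x)$ and $(nn^*,\alpha_n(x))\sim(mm^*,\alpha_m(x))$; thus $s$ and $r$ descend to $W_E$. For the inverse I must show $(n,x)\sim(m,x)$ implies $(n^*,\alpha_n(x))\sim(m^*,\alpha_m(x))$. The second coordinates agree since $\alpha_{n^*}=\alpha_n^{-1}$ and $\alpha_{m^*}=\alpha_m^{-1}$ by Proposition~\ref{prop:alg act new}\eqref{it:algact new 4}, the non-isolated case follows by inverting germs, and the isolated case is again a degree computation: with $y=\alpha_n(x)$ the projection identities give $p_ynm^*p_y=(p_ynp_x)(p_xm^*p_y)$ of degree $k_n-k_m$ and $p_xn^*mp_x=(p_xn^*p_y)(p_ymp_x)$ of degree $k_m-k_n$, where $p_ynp_x=r_n\one_{\{(y,k_n,x)\}}$ and $p_ymp_x=r_m\one_{\{(y,k_m,x)\}}$ by Lemma~\ref{lem:isolated}\eqref{it:iso1}; the hypothesis that $p_xn^*mp_x$ has degree $0$ forces $k_n=k_m$, so $p_ynm^*p_y$ has degree $0$.

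Then I would verify the category axioms. The composability condition $\alpha_{n_2}(x_2)=x_1$ coincides with $s([(n_1,x_1)])=r([(n_2,x_2)])$ because, for units, $[(d,x)]=[(d',x')]$ exactly when $x=x'$ (one direction is immediate, the other is Lemma~\ref{lem:eqrelond}), applied here to $(n_1^*n_1,x_1)$ and $(n_2n_2^*,\alpha_{n_2}(x_2))$. Associativity is inherited from associativity of multiplication in $L_R(E)$, both bracketings being defined under the same conditions $\alpha_{n_2}(x_2)=x_1$ and $\alpha_{n_3}(x_3)=x_2$ by $\alpha_{n_2n_3}=\alpha_{n_2}\circ\alpha_{n_3}$ in Proposition~\ref{prop:alg act new}\eqref{it:algact new 1}. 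The unit laws reduce to the single equivalence $(nn^*n,x)\sim(n,x)$: the $\alpha$-germs agree since $\alpha_{nn^*n}=\alpha_n$ on $\dom(nn^*n)$, and in the isolated case $p_x(nn^*n)^*np_x=p_x(n^*n)(n^*n)p_x$ is $p_x$ applied to a $0$-graded diagonal element, hence a scalar multiple of $p_x$ by Lemma~\ref{jackpot} and of degree $0$. Finally the inverse laws are read off directly, since $[(n,x)][(n^*,\alpha_n(x))]=[(nn^*,\alpha_n(x))]=r([(n,x)])$ and $[(n^*,\alpha_n(x))][(n,x)]=[(n^*n,x)]=s([(n,x)])$.

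I expect the main obstacle to be the isolated-path computations underpinning well-definedness of the product and the inverse: these require careful bookkeeping with the projection-intertwining identities of Lemma~\ref{lem:alphapx new} together with the homogeneity supplied by Lemma~\ref{lem:isolated}, and they parallel the transitivity argument already carried out in the proof of Lemma~\ref{lem-equivalence relation}. The remaining verifications, by contrast, are routine consequences of the partial-action properties in Proposition~\ref{prop:alg act new} and the diagonal equivalence in Lemma~\ref{lem:eqrelond}.
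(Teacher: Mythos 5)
Your proposal is correct and, for the bulk of the proposition, follows the paper's proof closely: the same isolated/non-isolated case split for well-definedness of the product, the same projection-sliding computation via Lemma~\ref{lem:alphapx new}\eqref{it:px3} collapsing $p_{x_2}(n_1n_2)^*(n_1'n_2')p_{x_2}$ to a scalar multiple of $p_{x_2}n_2^*n_2'p_{x_2}$ (the paper gets the scalar from Lemma~\ref{jackpot}, as you do), the same appeal to Lemma~\ref{lem:eqrelond} for the range and source maps, the same reduction of the unit laws to $(nn^*n,x)\sim(n,x)$, and associativity inherited from multiplication in $L_R(E)$. Your explicit remark that units are classified by their base points, so that the composability condition $\alpha_{n_2}(x_2)=x_1$ coincides with $s([(n_1,x_1)])=r([(n_2,x_2)])$, is a point the paper leaves implicit, and it is correct.

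The one place you genuinely diverge is the inverse, and there your argument has a citation gap. The paper never verifies directly that $(n,x)\sim(m,x)$ implies $(n^*,\alpha_n(x))\sim(m^*,\alpha_m(x))$: having shown the product is well defined and associative, and that $[(n,x)][(n^*,\alpha_n(x))]=r([(n,x)])$ and $[(n^*,\alpha_n(x))][(n,x)]=s([(n,x)])$, it concludes that $[(n,x)]^{-1}=[(n^*,\alpha_n(x))]$ is well defined because inverses in a category are unique --- so well-definedness of the inverse map comes for free. Your direct verification is a valid alternative route, but you apply Lemma~\ref{lem:isolated}\eqref{it:iso1} to the mixed corners $p_ynp_x$ and $p_ymp_x$ with $y=\alpha_n(x)$, whereas that lemma only treats $p_xnp_x$ with the same point on both sides. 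The claim that $p_ynp_x$ is a single homogeneous term $r_n\one_{\{(y,k_n,x)\}}$ is true, but needs its own (short) argument: by Lemma~\ref{lem:alphapx new}\eqref{it:px3}, $p_ynp_x=np_x$, which in normal form is a finite sum $\sum_i r_i\one_{\{(y,k_i,x)\}}$ with distinct $k_i$ (each $Z(\mu,\nu)$ meets $s^{-1}(x)$ at most once, and all ranges equal $y$ by Proposition~\ref{prop:supp n}\eqref{it:lemsupp 2}); then $(np_x)^*(np_x)=p_xn^*np_x=(n^*n)(x)\,p_x$ is homogeneous of degree $0$, while its $(k_{\min},k_{\max})$-cross term $\overline{r_{\min}}\,r_{\max}\one_{\{(x,k_{\max}-k_{\min},x)\}}$ is the unique summand of maximal degree and is nonzero since $R$ is an integral domain, forcing $k_{\max}=k_{\min}$. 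This is exactly the cancellation trick in the proof of Lemma~\ref{lem:isolated}\eqref{it:iso1}, just not its statement. With that patch, your bookkeeping ($\deg p_xn^*mp_x=k_m-k_n=0$ forces $\deg p_ynm^*p_y=k_n-k_m=0$) is correct; alternatively, adopt the paper's uniqueness-of-inverses shortcut and delete the direct verification entirely, which is precisely what that shortcut buys.
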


\begin{proof} A groupoid is a small category with inverses; here the set of morphisms will be $W_E$ and the set of objects will be $W_E^{(0)}=\{[(d,x)]:d\in D, x\in \dom(d)\}$ as a subset of the morphisms. 

We start by showing that  the product  is well-defined. 
Suppose that  $(m_1,x_1)\sim (n_1, x_1)$, $(m_2,x_2)\sim (n_2, x_2)$, and $[(n_1, x_1)]$ and $[(n_2, x_2)]$ are composable, that is, $\alpha_{n_2}(x_2)=x_1$.
We consider two cases: either $x_1$ is an isolated path or it is not.  Since $\alpha_{n_2}$ is a homeomorphism, it follows that $x_1$ is isolated if and only if $x_2$ is.  In either case, we need to show that 
$[m_1,x_1]$ and $[m_2,x_2]$ are composable and $(n_1n_2, x_2)\sim (m_1m_2, x_2)$.

First suppose that $x_1$ is isolated. We know $\alpha_{n_1}(x_1)=\alpha_{m_1}(x_1)$, $\alpha_{n_2}(x_2)=\alpha_{m_2}(x_2)$, and $p_{x_1}n_1^*m_1p_{x_1}$ and $p_{x_2}n_2^*m_2p_{x_2}$ are $0$-graded. Thus $x_1=\alpha_{n_2}(x_2)=\alpha_{m_2}(x_2)$, and hence $[(m_1, x_1)]$ and $[(m_2,x_2)]$ are composable as well. We have
\[
\alpha_{n_1n_2}(x_2)=\alpha_{n_1}\circ\alpha_{n_2}(x_2)=\alpha_{n_1}(x_1)=\alpha_{m_1}(x_1)=\alpha_{m_1}\circ\alpha_{m_2}(x_2)=\alpha_{m_1m_2}(x_2).
\] 
We compute using Lemma~\ref{lem:alphapx new}\eqref{it:px3} several times:
\begin{align*}
p_{x_2}(n_1n_2)^*(m_1m_2)p_{x_2}
&=
(n_2p_{x_2})^*n_1^*m_1p_{\alpha_{m_2}(x_2)}m_2\\
&=(p_{\alpha_{n_2}(x_2)}n_2)^*n_1^*m_1p_{\alpha_{m_2}(x_2)}m_2\\
&=n_2^*(p_{x_1}n_1^*m_1p_{x_1})m_2\\
&=n_2^*(rp_{x_1})m_2
\end{align*}
for some $r\in R$ by Lemma~\ref{jackpot}. But we know
\[
p_{x_2}n_2^*m_2p_{x_2}=n_2^*p_{\alpha_{n_2}(x_2)}p_{\alpha_{m_2}(x_2)}m_2=n_2^*p_{x_1}m_2
\]
is $0$-graded, and now $n_2^*(rp_{x_1})m_2$ must be as well. Thus $p_{x_2}(n_1n_2)^*(m_1m_2)p_{x_2}$ is $0$-graded as required.  We have proved that the product is well-defined  when $x_1$ is isolated.

Second, suppose $x_1$ is not isolated. Then $x_2$ is not isolated either.  By definition of $\sim$ there exists an open neighbourhood $V$ of $x_1$  such that $V\subset \dom(n_1)\cap \dom(m_1)$ and $\alpha_{n_1}(v)=\alpha_{m_1}(v)$ for all $v\in V$. Similarly, there exists an open neighbourhood $Y$ of $x_2$  such that $Y\subset\dom(n_2)\cap \dom(m_2)$ and $\alpha_{n_2}(y)=\alpha_{m_2}(y)$ for all $y\in Y$. We have $x_1=\alpha_{n_2}(x_2)=\alpha_{m_2}(x_2)$, and hence $[(m_1, x_1)]$ and $[(m_2,x_2)]$ are composable as well.
 Let $U=\alpha_{n_2}^{-1}(V)\cap Y$.  Then $\alpha_{n_2}(x_2)=x_1$ implies $x_2\in U$. 
Since $\alpha_{n_2}^{-1}$ is a homeomorphism, $U$ is an open neighbourhood of $x_2$. Let $x\in U$. Then $x\in Y$ implies $x\in \dom(n_2)\cap\dom(m_2)$.  Also, $x\in \alpha_{n_2}^{-1}(V)$ implies $\alpha_{n_2}(x)\in \dom(n_1)\cap\dom(n_2)$. Now $x\in \dom(n_1n_2)\cap\dom(m_1m_2)$ by Lemma~\ref{prop:alg act new}.  Thus $U \subset \dom(n_1n_2)\cap\dom(m_1m_2)$. Finally,  
\[
\alpha_{n_1n_2}(x)=\alpha_{n_1}\circ \alpha_{n_2}(x)=\alpha_{n_1}\circ \alpha_{m_2}(x)=\alpha_{m_1}\circ \alpha_{m_2}(x)=\alpha_{m_1m_2}(x).
\]
Thus the product is also well-defined when $x_1$ is not isolated.  

It is now straightforward to observe that the product is associative: suppose that $x_1=\alpha_{n_2}(x_2)$ and $x_2=\alpha_{n_3}(x_3)$. Then
\begin{align*}
\big([(n_1, x_1)][(n_2, x_2)]\big)[(n_3, x_3)]  &=[(n_1n_2, x_2)][(n_3, x_3)]=[((n_1n_2)n_3, x_3)]\\
\intertext{and $\alpha_{n_2n_3}(x_3)=\alpha_{n_2}(x_2)=x_1$, whence}
[(n_1, x_1)]\big([(n_2, x_2)][(n_3, x_3)]\big)&=[(n_1, x_1)][(n_2n_3, x_3)]=[(n_1(n_2n_3), x_3)].
\end{align*}

To see that the range map is well-defined, suppose that $(n, x)\sim (m,x)$. Then $\alpha_n(x)=\alpha_m(x)$. 
Now because $nn^*, mm^* \in D(E)$,  we have 
$(nn^*,\alpha_n(x))\sim (mm^*,\alpha_m(x))$ by Lemma~\ref{lem:eqrelond}. 

We claim that $r([(n,x)])[(n,x)]=[(nn^*n,x)]$ is just $[(n,x)]$. If $x$ is isolated, this follows because $\alpha_{nn^*n}(x)=\alpha_n\circ \alpha_{n^*n}(x)=\alpha_n(x)$ and $p_xn^*nn^*np_x$ is $0$-graded. If $x$ is not isolated, consider the open neighbourhood $U=\dom(nn^*n)\cap\dom (n)$ of $x$: we have $y\in U$ implies that $\alpha_{nn^*n}(y)=\alpha_n\circ \alpha_{n^*n}(y)=\alpha_n(y)$ as required. In either case $(nn^*n,x)\sim (n,x)$, and hence $r([(n,x)])[(n,x)]=[(n,x)]$.
Similarly the source map is well-defined and $[(n,x)]s([(n,x)])=[(n,x)]$. 

Finally, we observe that 
\begin{gather*}
[(n,x)][(n^*,\alpha_n(x))]=[(nn^*,\alpha_n(x))]=r([(n,x)])\\
[(n^*, \alpha_n(x))][(n,x)]=[(n^*n,x)]=s([(n,x)]).\end{gather*} It now follows from the associativity of the product that $[(n,x)]^{-1}=[(n^*,\alpha_n(x))]$ is a well-defined inverse for $[(n,x)]$.
\end{proof}

For $n\in N(E)$ define
\[
Z(n):=\{[(n,x)]: x\in \dom(n)\}.
\]
It is easy to check that $\{Z(n):n\in N(E)\}$ is a basis for a topology on $W_E$.  It is not immediately clear that  $W_E$ equipped with this topology is a topological groupoid. We will deduce this from Theorem~\ref{thm:groupoid iso} below which shows that $G_E$ and $W_E$ are isomorphic as topological groupoids.  The remainder of this section
is devoted to proving this.

\begin{lemma}
\label{lem:themap} 
The map $\Phi=\Phi_E: G_E\to W_E$ defined by
\[
(\mu x, |\mu|-|\nu|, \nu x)\mapsto [(\one_{Z(\mu,\nu)}, \nu x)].
\]
is a well-defined groupoid homomorphism.
\end{lemma}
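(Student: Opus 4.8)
The plan is to verify two things: that $\Phi$ is independent of the presentation $(\mu x, |\mu|-|\nu|, \nu x)$ chosen for a given element of $G_E$, and that $\Phi$ respects composition. The well-definedness is the heart of the matter. First I would pin down the ambiguity in the presentation: if $(\mu x, |\mu|-|\nu|, \nu x)$ and $(\mu' x', |\mu'|-|\nu'|, \nu' x')$ present the same $g\in G_E$, then $\mu x = \mu' x'$, $\nu x = \nu' x'$ and $|\mu|-|\nu| = |\mu'|-|\nu'|$. Assuming $|\nu|\le|\nu'|$, the equality $\nu x = \nu' x'$ forces $\nu' = \nu b$ and $x = b x'$ for a finite path $b$, and the length condition then forces $\mu' = \mu b$. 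So every reindexing has the form $(\mu,\nu)\mapsto(\mu b,\nu b)$, and it suffices to show
\[
(\one_{Z(\mu,\nu)},\nu x)\sim(\one_{Z(\mu b,\nu b)},\nu x),
\]
noting that $\nu x = \nu b x'$ so the base points agree, and that both pairs lie in the domain of the relation since $\dom(\one_{Z(\mu,\nu)})=Z(\nu)$ and $\dom(\one_{Z(\mu b,\nu b)})=Z(\nu b)$ by Example~\ref{ex alpha n}.

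To establish this equivalence I would split along Definition~\ref{def:eqrel}. By Example~\ref{ex alpha n} both partial maps send $\nu x$ to $\mu x$, so $\alpha_{\one_{Z(\mu,\nu)}}(\nu x)=\alpha_{\one_{Z(\mu b,\nu b)}}(\nu x)$. If $\nu x$ is not isolated, I would take $V=Z(\nu b)$: this is an open neighbourhood of $\nu x$ contained in $Z(\nu)\cap Z(\nu b)=Z(\nu b)$, and for $y=\nu b y'\in Z(\nu b)$ both maps send $y$ to $\mu b y'$, so the germs agree. If $\nu x$ is isolated, I must instead check that $p_{\nu x}(\one_{Z(\mu,\nu)})^*\one_{Z(\mu b,\nu b)}p_{\nu x}$ has degree $0$; using $(\one_{Z(\mu,\nu)})^*=\one_{Z(\nu,\mu)}$ and the multiplication formula \eqref{it:bp1} gives
\[
\one_{Z(\nu,\mu)}\one_{Z(\mu b,\nu b)}=\one_{Z(\nu b,\nu b)}=\one_{Z(\nu b)}\in D(E),
\]
which is $0$-graded, and compressing by $p_{\nu x}$ keeps it $0$-graded (by Lemma~\ref{jackpot} it is a scalar multiple of $p_{\nu x}$). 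This gives well-definedness.

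Finally, for the homomorphism property I would take composable $g=(\mu x,|\mu|-|\nu|,\nu x)$ and $h$ with $r(h)=s(g)=\nu x$, presented as $h=(\alpha w,|\alpha|-|\beta|,\beta w)$ with $\alpha w=\nu x$. Then $\alpha_{\one_{Z(\alpha,\beta)}}(\beta w)=\alpha w=\nu x$, so $\Phi(g)$ and $\Phi(h)$ are composable in $W_E$ and
\[
\Phi(g)\Phi(h)=[(\one_{Z(\mu,\nu)}\one_{Z(\alpha,\beta)},\beta w)].
\]
Since $\nu$ and $\alpha$ are both initial segments of $\nu x=\alpha w$, one extends the other; applying \eqref{it:bp1} in each of the two cases collapses $\one_{Z(\mu,\nu)}\one_{Z(\alpha,\beta)}$ to a single $\one_{Z(\cdot,\cdot)}$, and a short bookkeeping check (matching lengths and tails) identifies $(\one_{Z(\cdot,\cdot)},\beta w)$ as a legitimate presentation of $gh=(\mu x,(|\mu|-|\nu|)+(|\alpha|-|\beta|),\beta w)$. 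Hence $\Phi(gh)=\Phi(g)\Phi(h)$, which, together with well-definedness, makes $\Phi$ a groupoid homomorphism.

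The main obstacle is the isolated case of well-definedness: this is exactly where the degree, rather than a germ, enters Definition~\ref{def:eqrel}, so the computation must land in the diagonal in order to force the degree to be $0$. The multiplication formula \eqref{it:bp1} handles this cleanly because $Z(\nu,\mu)Z(\mu b,\nu b)$ falls inside the unit space $\go$.
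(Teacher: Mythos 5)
Your proof is correct, and its first half (well-definedness) is essentially the paper's argument: the same reduction of the ambiguity to a common extension $(\mu,\nu)\mapsto(\mu b,\nu b)$, the same germ argument on the cylinder $Z(\nu b)$ in the non-isolated case, and the same degree count in the isolated case (your observation that $\one_{Z(\nu,\mu)}\one_{Z(\mu b,\nu b)}=\one_{Z(\nu b)}$ lands in $D(E)$, hence is $0$-graded, is if anything slightly more explicit than the paper's direct count $|\nu|-|\mu|+|\mu b|-|\nu b|=0$). Where you genuinely diverge is the homomorphism step. The paper writes $\Phi(gh)$ using the padded presentation $m=\one_{Z(\mu x(0,|\beta|),\gamma y(0,|\nu|))}$ and then proves by hand that $(m,\gamma y)\sim(n,\gamma y)$ for $n=\one_{Z(\mu,\nu)}\one_{Z(\beta,\gamma)}$, which forces a second full case analysis: a degree computation when the base point is isolated and a germ computation when it is not. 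You instead note that since $\nu$ and $\alpha$ are comparable initial segments of the common path, \eqref{it:bp1} collapses the convolution product to a single $\one_{Z(\mu\alpha',\beta)}$ (if $\alpha=\nu\alpha'$) or $\one_{Z(\mu,\beta\nu')}$ (if $\nu=\alpha\nu'$), and your ``bookkeeping check'' is indeed immediate: in the first case $x=\alpha'w$ gives $gh=(\mu\alpha' w,|\mu\alpha'|-|\beta|,\beta w)$, and in the second $w=\nu'x$ gives $gh=(\mu x,|\mu|-|\beta\nu'|,\beta\nu' x)$, so in either case the collapsed pair is a legitimate presentation of $gh$ and well-definedness, already proved, yields $\Phi(gh)=\Phi(g)\Phi(h)$ with no further appeal to Definition~\ref{def:eqrel}. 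Your route buys economy by routing everything through well-definedness once, avoiding the duplicated isolated/non-isolated verification; the paper's longer computation is self-contained at each step and illustrates the equivalence relation in action, but is not logically necessary.
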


\begin{proof}   
We start by showing that $\Phi$ is well-defined. Suppose that $(\mu x, |\mu|-|\nu|, \nu x)=(\beta y, |\beta|-|\gamma|, \gamma y)$ in $G_E$ for some $\mu, \nu, \beta,\gamma\in P(E)$ and $x,y\in E^\infty$. We need to show that $(\one_{Z(\mu,\nu)}, \nu x)\sim (\one_{Z(\beta,\gamma)}, \gamma y)$.  We have $\mu x=\beta y$, $|\mu|-|\nu|=|\beta|-|\gamma|$ and $\nu x=\gamma y$.  First suppose that $\nu x$ is isolated.  We have
\[
\alpha_{\one_{Z(\mu,\nu)}}(\nu x)=\mu x=\beta y=\alpha_{\one_{Z(\beta,\gamma)}}(\gamma y)=\alpha_{\one_{Z(\beta,\gamma)}}(\nu x)
\]
and $p_{\nu x}\one_{Z(\nu,\mu)}\one_{Z(\beta,\gamma)}p_{\nu x}$ has degree $|\nu|-|\mu|+|\beta|-|\gamma|=0$. Thus $(\one_{Z(\mu,\nu)}, \nu x)\sim (\one_{Z(\beta,\gamma)}, \gamma y)$ when $\nu x$ is isolated.  Second, suppose that $\nu x$ is not isolated.  
Since $\mu x=\beta y$, $|\mu|-|\nu|=|\beta|-|\gamma|$ and $\nu x=\gamma y$ either $\mu$ extends $\beta$ \emph{and} $\nu$ extends $\gamma$, or  $\beta$ extends $\mu$ \emph{and} $\gamma$ extends $\nu$. Suppose $\mu$ extends $\beta$ and $\nu$ extends $\gamma$ (the other case is similar).
Let $z\in \dom(\one_{Z(\mu,\nu)})\cap\dom(\one_{Z(\beta,\gamma)})=Z(\nu)\cap Z(\gamma)=Z(\nu)$, which is a neighbourhood  of $\nu x$. Then 
\[
\alpha_{\one_{Z(\mu,\nu)}}(z)=r(Z(\mu, \nu)z)=r(Z(\gamma, \beta)z)=\alpha_{\one_{Z(\beta,\gamma)}}(z).
\]
  Thus $(\one_{Z(\mu,\nu)}, \nu x)\sim (\one_{Z(\beta,\gamma)}, \gamma y)$ when $\nu x$ is not isolated. We have shown that $\Phi$ is well-defined.

Next we show that $\Phi$ is a groupoid homomorphism. Suppose that
$(\mu x,|\mu|-|\nu|, \nu x)$ and $(\beta y, |\beta|-|\gamma|, \gamma y)$ are composable in $G_E$, that is $\nu x=\beta y$. Then
\begin{align*}
\Phi((\mu x,|\mu|-|\nu|, \nu x)(\beta y, |\beta|-|\gamma|, \gamma y))&=\Phi((\mu x,|\mu|+|\beta|-(|\nu|+|\gamma|), \gamma y))\\&=[(\one_{Z(\mu x(0,|\beta|),\gamma y(0,|\nu|))}, \gamma y)];\\
\Phi((\mu x,|\mu|-|\nu|, \nu x))\Phi((\beta y, |\beta|-|\gamma|, \gamma y))&=[\one_{Z(\mu,\nu)},\nu x][\one_{Z(\beta,\gamma)},\gamma y]\\
&=[(\one_{Z(\mu,\nu)}\one_{Z(\beta,\gamma)}, \gamma y)].
\end{align*}
Let $m:=\one_{Z(\mu x(0,|\beta|),\gamma y(0,|\nu|))}$ and $n:=\one_{Z(\mu,\nu)}\one_{Z(\beta,\gamma)}$. 
 We need to show that \[(m, \gamma y)\sim (n, \gamma y).\]  First suppose that $\gamma y$ is isolated. Since $x=\sigma^{|\nu|}(\beta y)$ we have $\sigma^{|\beta|}(x)=\sigma^{|\nu|}(y)$. Now
\begin{align*}
\alpha_m(\gamma y)
&=\mu x(0,|\beta|)y(|\nu|, \infty)=\mu x(0,|\beta|)\sigma^{|\nu|}(y)\\
&=
\mu x(0,|\beta|)\sigma^{|\beta|}(x)=\mu x=\alpha_{\one_{Z(\mu,\nu)}}(\nu x)\\
&=\alpha_{\one_{Z(\mu,\nu)}}(\beta y)=\alpha_{\one_{Z(\mu,\nu)}\one_{Z(\beta,\gamma)}}(\gamma y)=\alpha_n(\gamma y).
\end{align*}
The degree of 
\[
p_{\gamma y}m^*np_{\gamma y}=p_{\gamma y} ( \one_{Z(\mu x(0,|\beta|),\gamma y(0,|\nu|))})^* \one_{Z(\mu,\nu)}\one_{Z(\beta,\gamma)} p_{\gamma y}
\]
is $-(|\mu|+|\beta|-(|\nu|+|\gamma|))+(|\mu|-|\nu|+|\beta|-|\gamma|)=0$. Thus $(m, \gamma y)\sim (n, \gamma y)$ when $\gamma y$ is isolated.  
 
Second, suppose that $\gamma y$ is not isolated. Since $\nu x=\beta y$, either $\nu$ extends $\beta$ or vice versa. Assume $\nu=\beta \nu'$ (the other case is similar). Then $n=\one_{Z(\mu,\gamma\nu')}$ and $\dom (n)=Z(\gamma\nu')$. Since $\dom(m)=Z(\gamma y(0, |\nu|))$ we have $\dom (m)\cap \dom (n)=Z(\gamma y(0, |\nu|))$ is an open neighbourhood of $\gamma y$. Let $z\in Z(\gamma y(0, |\nu|))$. Then 
\[
x(0, |\beta|)=(\nu x)(|\nu|, |\nu|+|\beta|)=(\beta y)(|\nu|, |\nu|+|\beta|)=y(|\nu|-|\beta|,|\nu|)=y(|\nu'|, |\nu|).
\]
Now 
\begin{align*}
\alpha_m(z)&=\alpha_{\one_{Z(\mu x(0,|\beta|),\gamma y(0,|\nu|))}}(z)=\mu x(0,|\beta|) \sigma^{|\gamma|+|\nu|}(z)\\
&=\mu y(|\nu'|, |\nu|) \sigma^{|\gamma|+|\nu|}(z)
=\mu z(|\gamma|+|\nu'|, |\gamma|+|\nu|)\sigma^{|\gamma|+|\nu|}(z)\\
&=\mu \sigma^{|\gamma|+|\nu|}(z)
=\alpha_{\one_{Z(\mu,\gamma\nu')}}(z)=\alpha_n(z).
\end{align*}
Thus $(m, \gamma y)\sim (n, \gamma y)$ when $\gamma y$ is not  isolated.  
We have shown that $\Phi$ is a groupoid homomorphism.
\end{proof}

To see that $\Phi$, as  defined in Lemma~\ref{lem:themap},  is injective we need another lemma.

\begin{lemma}
 \label{lem:getiandj}
Let $x \in E^{\infty}$, $\mu,\nu \in P(E)$ and $\eta$ a cycle in $P(E)$ such that
\[\nu x = \kappa \eta \eta \eta\dots  \quad \text{ and }\quad  \mu x = \lambda \eta\eta\eta\dots\]  where 
$\kappa$ and $\lambda$ in $P(E)$ are of minimal length (in the sense that $\kappa$ and $\lambda$ contain minimal subpaths of $\eta$). 
Then there exist $i,j\in\N$ such that $|\mu|-|\nu|=|\lambda\eta^i|-|\kappa\eta^j|$. 
\end{lemma}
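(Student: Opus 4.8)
The plan is to reduce the whole statement to a single congruence modulo $|\eta|$ and then read off $i$ and $j$. Write $\ell=|\eta|$, so $|\lambda\eta^i|-|\kappa\eta^j|=(|\lambda|-|\kappa|)+(i-j)\ell$. Hence the existence of $i,j\in\N$ with $|\mu|-|\nu|=|\lambda\eta^i|-|\kappa\eta^j|$ is equivalent to showing that $(|\mu|-|\nu|)-(|\lambda|-|\kappa|)$ is an integer multiple of $\ell$: once this is known, say it equals $k\ell$ with $k\in\Z$, one takes any $i,j$ with $i-j=k$, for instance $i=\max\{k,0\}$ and $j=\max\{-k,0\}$ (shifting both up by a common constant if one insists $i,j\geq 1$). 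So the task is purely to establish the congruence $(|\mu|-|\nu|)-(|\lambda|-|\kappa|)\equiv 0\pmod{\ell}$.

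The heart of the argument is to extract this congruence by comparing two descriptions of a common tail of $x$. Fix $N\in\N$ large. Since $\nu x=\kappa\eta\eta\cdots$ and $\mu x=\lambda\eta\eta\cdots$, stripping off initial segments with the shift map gives
\[
\sigma^{N}(x)=\sigma^{N+|\nu|}(\nu x)=\sigma^{N+|\nu|-|\kappa|}(\eta\eta\cdots)
\quad\text{and}\quad
\sigma^{N}(x)=\sigma^{N+|\mu|}(\mu x)=\sigma^{N+|\mu|-|\lambda|}(\eta\eta\cdots),
\]
valid once $N$ is large enough that the exponents are nonnegative. Writing $w=\eta\eta\cdots$, this yields $\sigma^{p}(w)=\sigma^{q}(w)$ with $p=N+|\nu|-|\kappa|$ and $q=N+|\mu|-|\lambda|$.

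It then remains to see that $\sigma^{p}(w)=\sigma^{q}(w)$ forces $p\equiv q\pmod{\ell}$, and this is the one substantive point. Because $\eta$ is a cycle, its interior vertices are distinct, so $\eta$ is not a proper power of a shorter path and $w=\eta\eta\cdots$ has minimal period $\ell$; that is, $\sigma^{k}(w)=w$ exactly when $\ell\mid k$. Consequently, for $p\le q$ we have $\sigma^{q}(w)=\sigma^{q-p}(\sigma^{p}(w))$, and $\sigma^{p}(w)$ is again purely periodic of minimal period $\ell$, so $\ell\mid (q-p)$. Applying this to the $p,q$ above gives $|\nu|-|\kappa|\equiv|\mu|-|\lambda|\pmod{\ell}$, equivalently $(|\mu|-|\nu|)-(|\lambda|-|\kappa|)\equiv 0\pmod{\ell}$, which is precisely the congruence required in the first paragraph. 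I expect the minimal-period (phase-uniqueness) fact to be the only real obstacle — everything else is index bookkeeping — and I anticipate that the minimality hypothesis on $\kappa$ and $\lambda$ serves only to pin these paths down canonically in the surrounding construction, playing no role in this length computation.
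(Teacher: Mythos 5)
Your proof is correct, but it takes a genuinely different route from the paper's. The paper argues by a two-case path decomposition according to which of $\kappa$, $\nu$ extends the other: if $\kappa=\nu\nu'$ it deduces $\mu\nu'=\lambda\eta^i$ from the minimality of $\lambda$ and reads off $j=0$; if $\nu=\kappa\eta^j\delta$ with $\delta\delta'=\eta$ it gets $|\mu\delta'|=|\lambda\eta^i|$, again from minimality of $\lambda$. You instead collapse the whole statement into the single congruence $(|\mu|-|\nu|)-(|\lambda|-|\kappa|)\equiv 0 \pmod{|\eta|}$ and prove it uniformly, with no case split, by comparing two shift presentations of a common tail of $x$ and using that $\eta\eta\cdots$ has minimal period $|\eta|$ --- which is legitimate here because the paper's cycles have distinct interior vertices, so $\eta$ is not a proper power; your verification of this phase-uniqueness step is sound (the set of periods of a one-sided shift-invariant sequence is closed under differences, so every period is a multiple of the minimal one). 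Your closing remark that the minimality of $\kappa$ and $\lambda$ plays no role in the length computation is accurate: your argument only uses that both tails are $\eta^\infty$, whereas the paper uses minimality to force $|\lambda|\leq|\mu\nu'|$ and thereby to produce $i,j\in\N$ directly, a nonnegativity issue you handle instead with the $\max\{k,0\}$ device. It is worth noting that the paper's step ``since $\lambda$ is of minimal length we must have $\mu\nu'=\lambda\eta^i$'' tacitly invokes the very fact you prove explicitly (a shift of $\eta^\infty$ equals $\eta^\infty$ only when the shift is a multiple of $|\eta|$), so your write-up surfaces a point the paper leaves implicit; what the paper's case analysis buys in exchange is explicit exponents $i,j$ expressed through the path decompositions, which matches how the lemma is consumed in the injectivity argument of Lemma~\ref{lem:bij}.
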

  
\begin{proof}
There are two cases.  First, suppose $\kappa$ extends $\nu$, that is, $\kappa = \nu\nu'$.  Then 
\[\lambda \eta \eta \eta\dots =\mu x = \mu \sigma^{|\nu|}(\kappa \eta \eta\dots) = \mu \nu' \eta \eta \eta\dots\]
Since $\lambda$ is of minimal
length we must have $\mu\nu' = \lambda\eta^i$ for some $i\in \N$.
Thus 
\[|\mu|-|\nu|= |\mu \nu'| - |\nu \nu'|= |\lambda\eta^i|-|\kappa|= |\lambda\eta^i|-|\kappa\eta^0|\]
as needed. 

Second, suppose $\nu$ extends $\kappa$.  Then  $\nu = \kappa\eta^j\delta$ for some $j \in \N$ and some initial segment $\delta$  of $\eta$. Since $\nu x = \kappa \eta \eta \eta\dots$, we have
\[x=\delta'\eta\eta\eta\dots  \text{ where } \delta\delta'=\eta.
\]
Thus $\lambda \eta \eta \eta\dots  = \mu x = \mu \delta'\eta\eta\eta\dots$.  Because 
$\lambda$ has minimal length,  
there exists $i\in \N$ such that 
$|\mu \delta'| = |\lambda\eta^i|$.
Now
\[|\mu|-|\nu|= |\mu| - |\kappa\eta^j\delta|
=|\mu\delta'|- |\kappa\eta^j\delta\delta'|= |\lambda\eta^i|-|\kappa\eta^{j+1}|\]
as needed.
\end{proof}

\begin{lemma}
\label{lem:bij} The map $\Phi:G_E \to W_E$ defined  
in Lemma~\ref{lem:themap} is a bijection.
 
\end{lemma}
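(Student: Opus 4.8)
The map $\Phi$ is already a groupoid homomorphism by Lemma~\ref{lem:themap}, so the plan is to prove separately that it is surjective and that it is injective, in each case splitting according to whether the relevant base infinite path is isolated, so as to match the two alternatives in Definition~\ref{def:eqrel}.

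For surjectivity I would start with an arbitrary class $[(n,x)]\in W_E$ and produce a preimage. Writing $n=\sum_{(\mu,\nu)\in F}r_{\mu,\nu}\one_{Z(\mu,\nu)}$ in normal form, all source paths $\nu$ share a common length $l$, so by Example~\ref{ex alpha n} there is a unique $\nu_0=x(0,l)$ with $x\in Z(\nu_0)$ and at least one $(\mu_0,\nu_0)\in F$. In fact $(\mu_0,\nu_0)$ is unique: if $(\mu_0,\nu_0),(\mu_1,\nu_0)\in F$ with $\mu_0\neq\mu_1$, then by Proposition~\ref{prop:supp n}\eqref{it:lemsupp 2} both $\mu_0,\mu_1$ are prefixes of the common range $\alpha_n(x)$, so one extends the other and $nn^*$ acquires a nonzero off-diagonal block $Z(\mu_0,\mu_1)$, contradicting $nn^*\in D(E)$. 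Set $u=\alpha_n(x)=\mu_0\sigma^{l}(x)$ and $g=(u,|\mu_0|-l,x)\in G_E$. I then claim $(n,x)\sim(\one_{Z(\mu_0,\nu_0)},x)$, whence $\Phi(g)=[(\one_{Z(\mu_0,\nu_0)},x)]=[(n,x)]$. When $x$ is not isolated this is the germ condition, since $\alpha_n$ and $\alpha_{\one_{Z(\mu_0,\nu_0)}}$ agree on the neighbourhood $Z(\nu_0)\cap\dom(n)$ of $x$ by Example~\ref{ex alpha n}. When $x$ is isolated, $\alpha_n(x)=u=\alpha_{\one_{Z(\mu_0,\nu_0)}}(x)$, and the uniqueness above gives $np_x=r_{\mu_0,\nu_0}\one_{\{(u,|\mu_0|-l,x)\}}$, so that $p_xn^*\one_{Z(\mu_0,\nu_0)}p_x$ is a nonzero multiple of $\one_{\{(x,0,x)\}}$ and hence has degree $0$, as required by Definition~\ref{def:eqrel}\eqref{eqcondiso}.

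For injectivity, suppose $\Phi(g_1)=\Phi(g_2)$ with $g_1=(\mu x,|\mu|-|\nu|,\nu x)$ and $g_2=(\beta y,|\beta|-|\gamma|,\gamma y)$. Since $\sim$ forces equal second coordinates, $p:=\nu x=\gamma y$; and in both alternatives of Definition~\ref{def:eqrel} the $\alpha$-values agree, giving $q:=\mu x=\beta y$. Thus $g_1$ and $g_2$ are both of the form $(q,\cdot\,,p)$, and it remains to show the lags $a:=|\mu|-|\nu|$ and $b:=|\beta|-|\gamma|$ coincide. If $p$ is not isolated, the germ condition forces $\mu\sigma^{|\nu|}(z)=\beta\sigma^{|\gamma|}(z)$ for all $z$ in a neighbourhood of $p$; were $a\neq b$, this relation would make every such $z$ eventually periodic with period dividing $|a-b|$, forcing $p$ to be isolated, so $a=b$. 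If $p$ is isolated, then $(\one_{Z(\mu,\nu)})^*\one_{Z(\beta,\gamma)}=\one_{Z(\nu,\mu)}\one_{Z(\beta,\gamma)}$ is by \eqref{it:bp1} a single block $Z(\cdot,\cdot)$ of degree $b-a$, and $p_p(\cdots)p_p$ is a nonzero scalar multiple of $\one_{\{(p,b-a,p)\}}$; the degree-$0$ requirement of Definition~\ref{def:eqrel}\eqref{eqcondiso} then gives $a=b$, hence $g_1=g_2$.

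I expect the genuine obstacle to sit inside the isolated case of injectivity, precisely when $p$ (and hence $q$) is eventually periodic along a cycle without entry $\eta$, say $p=\kappa\eta\eta\cdots$ and $q=\lambda\eta\eta\cdots$. Here both lags $a,b$ are determined only modulo $|\eta|$ by the underlying element of $G_E$, and verifying that the homogeneous degree of $p_p(\one_{Z(\mu,\nu)})^*\one_{Z(\beta,\gamma)}p_p$ is exactly $b-a$ requires reconciling the two path representations with the periodicity. This is where Lemma~\ref{lem:getiandj} enters: it expresses $a$ and $b$ as $|\lambda\eta^{i}|-|\kappa\eta^{j}|$ for suitable $i,j$, which shows $a\equiv b\pmod{|\eta|}$ (consistent with Lemma~\ref{lem:isolated}\eqref{it:iso4}) and lets the degree-$0$ condition finally force $a=b$. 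Everything else reduces to the multiplication rule \eqref{it:bp1} together with the singleton properties from Proposition~\ref{prop:supp n}.
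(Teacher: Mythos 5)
Your architecture (surjectivity/injectivity, split by isolation) matches the paper's, and several pieces are correct, but two key steps fail as stated, and your diagnosis of where the difficulty lives is inverted. The real gap is in your \emph{non-isolated} injectivity case: the inference ``every $z\in V$ is eventually periodic with period dividing $|a-b|$, forcing $p$ to be isolated'' is false. Take the graph with a loop $e$ at $v$, a loop $f$ at $w$, and an edge $h$ with $r(h)=v$, $s(h)=w$: every path in the open set $Z(v)$ is eventually periodic with period $1$, yet $eee\cdots$ is not isolated (it is the limit of $e^nhfff\cdots$). Bounded eventual periodicity on an open set does not give isolation; what does work is that when $a\neq b$ the identity $\mu\sigma^{|\nu|}(z)=\beta\sigma^{|\gamma|}(z)$ pins $z$ down \emph{completely} (writing $z=\gamma w$ with $t=|\nu|-|\gamma|\geq 0$, one gets $\mu\sigma^{t}(w)=\beta w$, which fixes a prefix of $w$ and forces $\sigma^{s}(w)=\sigma^{t}(w)$ with $s\neq t$, so the periodic word is also contained in the fixed prefix and $w$ is unique); hence $V=\{p\}$, contradicting non-isolation. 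Alternatively, this is exactly where the paper deploys Lemma~\ref{lem:getiandj}: unequal lags force $\nu x=\kappa\eta\eta\cdots$, the lemma rewrites both morphisms with representatives $(\lambda\eta^{i},\kappa\eta^{j})$ and $(\lambda\eta^{i'},\kappa\eta^{j'})$, and since $\nu x$ is not isolated the cycle $\eta$ has an entrance $\zeta$, on which the two germs visibly disagree. By contrast, your \emph{isolated} injectivity case needs no reconciliation modulo $|\eta|$ and no Lemma~\ref{lem:getiandj}: by \eqref{it:bp1} the compressed product $p_p\one_{Z(\nu,\mu)}\one_{Z(\beta,\gamma)}p_p$ is homogeneous of degree exactly $b-a$ and is nonzero because it contains $g_1^{-1}g_2=(p,b-a,p)$, so the degree-$0$ condition gives $a=b$ at once --- which is precisely the paper's (short) argument. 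Your closing paragraph therefore locates the obstacle in the wrong case.

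In surjectivity, your uniqueness claim for the pair $(\mu_0,\nu_0)$ is true but your justification is incomplete: a single nonzero summand $\one_{Z(\mu_0,\mu_1)}$ does not contradict $nn^*\in D(E)$, because $nn^*=\sum_{\nu}\sum_{\mu,\mu'}r_{\mu,\nu}\overline{r_{\mu',\nu}}\,\one_{Z(\mu,\mu')}$ and off-diagonal contributions coming from \emph{different} source paths $\nu$ can land on the same isotropy points and cancel --- controlling exactly this cancellation is why the paper's proofs of Proposition~\ref{prop:supp n} and Lemma~\ref{lem:isolated} resort to the maximal/minimal-degree trick rather than reading off one block. The repair is to compress first: by Proposition~\ref{prop:supp n}\eqref{it:lemsupp 2} all pairs $(\mu,\nu_0)\in F$ share the range $u=\alpha_n(x)$, so with lengths $L_1<\cdots<L_k$ and coefficients $a_1,\dots,a_k$ one has $np_xn^*=\sum_{i,j}a_i\overline{a_j}\,\one_{\{(u,\,L_i-L_j,\,u)\}}$; since $n\in N(E)$ and $p_x\in D(E)$, this lies in $D(E)$, so the coefficient $a_k\overline{a_1}$ at the maximal lag $L_k-L_1$ must vanish, which in an integral domain forces $k=1$. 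With that fixed, your isolated surjectivity step is actually a genuine streamlining of the paper: the paper does not prove uniqueness and instead splits the isolated case into aperiodic (where Lemma~\ref{lem:isolated}\eqref{it:iso4} gives degree $0$ automatically) and periodic (where the candidate preimage must be corrected by powers $\eta^{s},\eta^{t}$, with a degree computation via Lemma~\ref{lem:alphapx new}); your choice of the normal-form pair makes the correction unnecessary. Your non-isolated surjectivity case agrees with the paper's first case and is fine.
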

\begin{proof}
To see that $\Phi$ is injective,   suppose that $\Phi(\mu x,|\mu|-|\nu|,\nu x)=\Phi(\beta y,|\beta|-|\gamma|,\gamma y)$, that is, 
$(\one_{Z(\mu,\nu)}, \nu x)\sim (\one_{Z(\beta,\gamma)}, \gamma y)$.  Thus $\nu x=\gamma y$. 
First, suppose that $\nu x =\gamma y$ is isolated. Then
\[\mu x=\alpha_{\one_{Z(\mu,\nu)}}(\nu x)=\alpha_{\one_{Z(\beta,\gamma)}}(\nu x)=\alpha_{\one_{Z(\beta,\gamma)}}(\gamma y)=\beta y.\]
The degree of $p_{\nu x}\one_{Z(\nu,\mu)}\one_{Z(\beta,\gamma)}p_{\nu x}$ is $0$, that is, $|\nu|-|\mu|+|\beta|-|\gamma|=0$. Thus $(\mu x,|\mu|-|\nu|,\nu x)=(\beta y,|\beta|-|\gamma|,\gamma y)$ as required.  

Second, suppose that $\nu x=\gamma y$ is not isolated. Then there exists a neighbourhood $V\subset \dom(\one_{Z(\mu,\nu)})\cap\dom(\one_{Z(\beta,\gamma)})$ of $\nu x$ such that $\alpha_{\one_{Z(\mu,\nu)}}(v)=\alpha_{\one_{Z(\beta,\gamma)}}(v)$ for all $v\in V$.  Thus $\mu x=\alpha_{\one_{Z(\mu,\nu)}}(\nu x)=\alpha_{\one_{Z(\beta,\gamma)}}(\gamma y)=\beta y$.  Aiming for a contradiction, assume that $|\mu|-|\nu|\neq |\beta|-|\gamma|$.
This, and  the observation above that $\mu\sigma^{|\nu|}(\nu x)=\beta\sigma^{|\gamma|}(\nu x)$ implies that $\nu x$ is eventually periodic, say $\nu x=\kappa\eta\eta\dots$ for some cycle $\eta$ and a $\kappa\in P(E)$ of 
minimal length. Then $\mu x=\lambda\eta\eta\dots$ for some $\lambda\in P(E)$ of minimal length as well.
By Lemma~\ref{lem:getiandj}, there exist $i,j\in\N$ such that $|\lambda\eta^i|-|\kappa\eta^j|=|\mu|-|\nu|$. Similarly, there exist $i',j'\in\N$ such that $|\lambda\eta^{i'}|-|\kappa\eta^{j'}|=|\beta|-|\gamma|$.  
We now have 
\begin{align*}(\mu x,|\mu|-|\nu|,\nu x)&=(\lambda\eta^i\eta\dots, |\lambda\eta^i|-|\kappa\eta^j|, \kappa\eta^j\eta\dots)\text{ and }\\(\beta y,|\beta|-|\gamma|,\gamma y)&=(\lambda\eta^{i'}\eta\dots, |\lambda\eta^{i'}|-|\kappa\eta^{j'}|, \kappa\eta^{j'}\eta\dots).\end{align*}
We may assume that $|\lambda\eta^i|\geq |\mu|$ and that $|\kappa\eta^j|\geq |\nu|$. Now 
\[
\dom(\one_{Z(\mu,\nu)})\cap\dom(\one_{Z(\beta,\gamma)})\supset \dom(\one_{Z(\lambda\eta^i,\kappa\eta^j)})\cap\dom (\one_{Z(\lambda\eta^{i'},\kappa\eta^{j'})})\neq \emptyset.
\] 
By taking an intersection, we may assume that  
\[
\alpha_{\one_{{Z(\lambda\eta^i, \kappa\eta^j)}}}(v)
=\alpha_{\one_{Z(\lambda\eta^{i'},\kappa\eta^{j'})}}(v) \text{\ for all $v\in V$.}
\]
Since $\nu x$ is not isolated, $\eta$ must have an entry, and then there exists a path $\zeta$ with 
$s(\zeta)=s(\eta)$, $|\zeta|\leq|\eta|$ and $\zeta\neq\eta$.  There exists $l\in\N$ such that $l\geq j,j'$ and  $Z(\kappa\eta^l\zeta)\subset Z(\kappa\eta^l)\subset V$. But now for any infinite path $z=\kappa\eta^l\zeta z'\in Z(\kappa\eta^l\zeta)$ we have
\[
\lambda\eta^{l+i-j}\zeta z'=\alpha_{\one_{Z(\lambda\eta^i,\kappa\eta^j)}}(\kappa\eta^l\zeta z')= \alpha_{\one_{Z(\lambda\eta^{i'},\kappa\eta^{j'})}}(\kappa\eta^l\zeta z')=\lambda\eta^{l+i'-j'}\zeta z' .\]   This is a contradiction because $l+i-j\neq l+i'-j'$ and the exit path $\zeta$ does not extend $\mu$.  We have now shown that $\Phi$ is injective.

To see that $\Phi$ is onto, fix $[(n, y)]\in W_E$. Then $y\in\dom (n)$. Write $n=\sum_{(\mu,\nu)\in F}  r_{\mu,\nu}\one_{Z(\mu, \nu)}$ in normal form. Let $x:=\alpha_n(y)$. By Example~\ref{ex alpha n}, there exists $(\beta,\gamma)\in F$ such that $x=\alpha_n(y)=\alpha_{\one_{Z(\beta, \gamma)}}(y)=\beta\sigma^{|\gamma|}(y)$. Now $\sigma^{|\beta|}(x)=\sigma^{|\gamma|}(y)$ implies that $x=\beta x'$ and $y=\gamma x'$. Thus $(x, |\beta|-|\gamma|, y)=(\beta x', |\beta|-|\gamma|, \gamma x')\in G_E$.

First, suppose that $y$ is not isolated.  Let $V=Z(\gamma)$ which is an open neighbourhood of $y$ contained in $\dom(\one_{Z(\beta,\gamma)})\cap \dom(n)$. Let $v\in V$. By Example~\ref{ex alpha n},  $\alpha_n(v)=\beta\sigma^{|\gamma|}(v)=\alpha_{\one_{Z(\beta,\gamma)}}(v)$. Thus 
$\Phi(x, |\beta|-|\gamma|, y)=\Phi(\beta x', |\beta|-|\gamma|, \gamma x')=[(\one_{Z(\beta,\gamma))}, y)]=[(n,y)]$.

Second,  suppose that $y$ is isolated and not periodic.  Then $(\one_{Z(\beta,\gamma)}, y)\sim (n, y)$ because $\alpha_{\one_{Z(\beta,\gamma)}}(y)=x=\alpha_n(y)$ and $p_y n^*\one_{Z(\beta,\gamma)} p_y$ is homogeneous of degree $0$ by Lemma~\ref{lem:isolated}\eqref{it:iso4}. Thus $\Phi(x, |\beta|-|\gamma|, y)=(\beta x', |\beta|-|\gamma|, \gamma y')=[(n,y)]$.

Third, suppose that $y$ is isolated and is periodic. Then $y=\nu\eta\eta\dots$ where $\nu\in P(E)$ and $\eta\in P(E)$ is a cycle without entry.  Then $x=\alpha_n(y)=\mu\eta\eta\dots$ for some $\mu\in P(E)$. By Lemma~\ref{lem:isolated}\eqref{it:iso4}, $p_y(\one_{Z(\mu,\nu)})^*n p_y$ is homogeneous of degree a multiple of $|\eta|$, say $(s-t)|\eta|$ for some $s, t\in\N$. Then 
$((\mu\eta^s)\mu\dots, |\mu|+s|\eta|-(|\nu|+t|\eta|), (\nu\eta^t)\eta\dots)\in G_E$ and 
\[
\Phi(((\mu\eta^s)\mu\dots, |\mu|+s|\eta|-(|\nu|+t|\eta|), (\nu\eta^t)\eta\dots))=[(\mu\eta^s(\nu\eta^t)^*, y)].
\]
We claim that $(\mu\eta^s(\nu\eta^t)^*, y)\sim (n, y)$. We have $\alpha_{\mu\eta^s(\nu\eta^t)^*}(y)=\mu\eta^l\eta\dots=x=\alpha_n(y)$ from above. Let $m=\one_{Z(\nu\eta^t,\mu\eta^s)}\one_{Z(\mu,\nu)}$. Then $\alpha_m(y)=y$, and using Lemma~\ref{lem:alphapx new}\eqref{it:px3} we get
\begin{align*}
\deg(p_y\one_{Z(\nu\eta^t,\mu\eta^s)}n p_y)&=\deg(p_y(p_y\one_{Z(\nu\eta^t,\mu\eta^s)}\one_{Z(\mu,\nu)})
\one_{Z(\nu,\mu)}n p_y)\\
&=\deg(p_y(p_ym)\one_{Z(\nu,\mu)}n p_y)\\
&=\deg(p_ymp_y\one_{Z(\nu,\mu)}n p_y)\\
&=|\nu|+t|\eta|-(|\mu|+s|\eta|)+|\mu|-|\nu|+\deg(p_y\one_{Z(\nu,\mu)}n p_y)\\
&=(t-s)|\eta|+\deg(p_y\one_{Z(\nu,\mu)}n p_y)=0.
\end{align*}
Thus $\Phi(((\mu\eta^s)\mu\dots, |\mu|+s|\eta|-(|\nu|+t|\eta|), (\nu\eta^t)\eta\dots))=[(n,y)]$ when $y$ is isolated and is periodic. We have now shown that $\Phi$ is surjective. Thus $\Phi$ is a groupoid isomorphism.
\end{proof}

\begin{thm}
\label{thm:groupoid iso} Let $E$ be a row-finite directed graph with no sources and let $R$ be an integral domain with identity.  
Then  $\Phi=\Phi_E: G_E\to W_E$ defined by
\[
(\mu x, |\mu|-|\nu|, \nu x)\mapsto [(\one_{Z(\mu,\nu)}, \nu x)]
\]
is a well-defined groupoid isomorphism. Equip $W_E$ with the topology generated by the basis $\{Z(n):n\in N(E)\}$  where $Z(n):=\{[(n,x)]: x\in \dom(n)\}$. Then  $W_E$ is a topological groupoid and is topologically isomorphic to $G_E$. 
\end{thm}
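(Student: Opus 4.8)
The plan is to deduce the theorem from the two preceding lemmas together with a single computation of how $\Phi$ acts on basic open sets. By Lemma~\ref{lem:themap} and Lemma~\ref{lem:bij}, $\Phi$ is already a bijective groupoid homomorphism, hence a groupoid isomorphism; so all that remains is to see that $\Phi$ is a homeomorphism for the stated topology, and then to transport the continuity of multiplication and inversion from $G_E$ to $W_E$.

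First I would compute, directly from the definitions and using $\dom(\one_{Z(\mu,\nu)})=Z(\nu)$, that for $\mu,\nu\in P(E)$ with $s(\mu)=s(\nu)$,
\[
\Phi(Z(\mu,\nu))=\{[(\one_{Z(\mu,\nu)},x)]:x\in Z(\nu)\}=Z(\one_{Z(\mu,\nu)}).
\]
Thus $\Phi$ carries the basis $\{Z(\mu,\nu)\}$ of $G_E$ bijectively onto the family $\{Z(\one_{Z(\mu,\nu)})\}$. Since $\Phi$ is a bijection, this family is a basis for the pushforward topology $\mathcal{T}'$ on $W_E$, and $\Phi$ is by construction a homeomorphism onto $(W_E,\mathcal{T}')$. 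The heart of the proof is then to show that $\mathcal{T}'$ coincides with the topology $\mathcal{T}$ generated by $\{Z(n):n\in N(E)\}$.

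One inclusion is immediate, since each $Z(\one_{Z(\mu,\nu)})$ is itself a $Z(n)$, giving $\mathcal{T}'\subseteq\mathcal{T}$. For the reverse inclusion I would show that every $Z(n)$ is a union of sets $Z(\one_{Z(\beta,\gamma)})$. Writing $n=\sum_{(\mu,\nu)\in F}r_{\mu,\nu}\one_{Z(\mu,\nu)}$ in normal form and using that $\dom(n)=\supp(n^*n)$ is open, I would fix $[(n,x)]\in Z(n)$, choose by Example~\ref{ex alpha n} the pair $(\mu,\nu)\in F$ with $x\in Z(\nu)$, and then pick $\gamma\supseteq\nu$ with $x\in Z(\gamma)\subseteq\dom(n)$ and set $\beta:=\mu\gamma(|\nu|,|\gamma|)$. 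The key claim is that $(\one_{Z(\beta,\gamma)},w)\sim(n,w)$ for every $w\in Z(\gamma)$. The ranges $\alpha_{\one_{Z(\beta,\gamma)}}(w)=\alpha_n(w)$ agree by Example~\ref{ex alpha n}, which settles the non-isolated case via germ agreement on $Z(\gamma)$; for isolated $w$ one checks that $p_w\one_{Z(\gamma,\beta)}\,n\,p_w$ is supported on $\{(w,0,w)\}$ and so has degree $0$, because the lag of the $\one_{Z(\gamma,\beta)}$-factor and the lag of the relevant summand of $n$ cancel exactly. Consequently $[(n,x)]=[(\one_{Z(\beta,\gamma)},x)]\in Z(\one_{Z(\beta,\gamma)})\subseteq Z(n)$, so $Z(n)\in\mathcal{T}'$ and hence $\mathcal{T}=\mathcal{T}'$.

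With $\mathcal{T}=\mathcal{T}'$ established, $\Phi$ is simultaneously a homeomorphism and a groupoid isomorphism. Since $G_E$ is a topological groupoid, transporting multiplication and inversion along $\Phi$ shows that the corresponding operations on $W_E$ are continuous, so $W_E$ is a topological groupoid topologically isomorphic to $G_E$. I expect the only genuine obstacle to be the isolated-path bookkeeping inside the key claim: one must verify that the degree condition of Definition~\ref{def:eqrel}\eqref{eqcondiso} holds automatically, and this is precisely where the exact cancellation of lags, together with the hypothesis that $R$ is an integral domain (so that the relevant summands of $n$ do not vanish), is used.
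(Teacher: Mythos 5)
Your proposal is correct, and for the homeomorphism half it takes a genuinely different route from the paper. Both arguments obtain the groupoid isomorphism directly from Lemmas~\ref{lem:themap} and \ref{lem:bij}, and both contain the identical computation $\Phi(Z(\mu,\nu))=Z(\one_{Z(\mu,\nu)})$, which is precisely how the paper proves that $\Phi$ is \emph{open}. For continuity, however, the paper argues sequentially: given $g_i\to g$ in $G_E$ and a basic neighbourhood $Z(n)$ of $\Phi(g)=[(\one_{Z(\mu,\nu)},\nu x)]$, it exploits the equivalence $(\one_{Z(\mu,\nu)},\nu x)\sim(n,\nu x)$ --- eventual equality $\nu y_i=\nu x$ when $\nu x$ is isolated, germ agreement on a neighbourhood $V$ when it is not --- to get $\Phi(g_i)\in Z(n)$ eventually. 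You instead prove the stronger, purely topological statement that every $Z(n)$ is a union of sets of the form $Z(\one_{Z(\beta,\gamma)})$, so that these ``monomial'' sets are a basis for the topology generated by $\{Z(n):n\in N(E)\}$ and the two topologies on $W_E$ coincide outright. Your route avoids sequences altogether (and hence any implicit reliance on the countability of the basis that makes sequential continuity suffice), and it yields a structural byproduct the paper does not state: the Weyl topology is already generated by the classes of the elementary normalizers $\one_{Z(\beta,\gamma)}$. The paper's route is shorter case-by-case because the germ/degree data packaged in $\sim$ hands it the required eventual membership directly.

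One step of your key claim is under-justified as written. In the isolated case you assert that $p_w\one_{Z(\gamma,\beta)}\,n\,p_w$ is supported on $\{(w,0,w)\}$ ``because the lags cancel exactly,'' but that cancellation only accounts for the summand $(\mu,\nu)$ of $n$ that you selected. If $w\in Z(\gamma)$ is isolated and periodic, say $w=\tau\eta\eta\cdots$ with $\eta$ a cycle without entry, then a priori other summands $(\mu',\nu)\in F$ with $\mu'=\alpha_n(w)(0,|\mu'|)$ could contribute points $(w,|\mu'|-|\mu|,w)$ of nonzero lag, so the support claim is not pure bookkeeping. It is nonetheless true, and the paper's Lemma~\ref{lem:isolated} closes the gap: $\one_{Z(\gamma,\beta)}n$ is again a normalizer, so $p_w\one_{Z(\gamma,\beta)}np_w$ is homogeneous (equal to $r\one_{\{(w,k,w)\}}$ if nonzero), while a direct convolution evaluation at $(w,0,w)$ --- using that $r|_{Z(\gamma,\beta)}$ is injective and that the $Z(\mu',\nu')$ in normal form are disjoint --- gives the value $r_{\mu,\nu}\neq 0$. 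Homogeneity then forces $k=0$, i.e.\ degree $0$, as you need. Note that the nonvanishing here comes from the definition of normal form ($r_{\mu,\nu}\neq 0$), not from the integral-domain hypothesis as your last sentence suggests; that hypothesis is consumed inside the proof of Lemma~\ref{lem:isolated} itself. With this one appeal inserted, your argument is complete.
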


\begin{proof}
In view of Lemmas~ and \ref{lem:themap} and \ref{lem:bij}, it remains  to show  that $\Phi$ is a homeomorphism. 
To see that $\Phi$ is open, we show that the basic open set $Z(\mu, \nu)$ of $G_E$ is mapped to the basic open set $Z(\one_{Z(\mu,\nu)})$ of $W_E$ under $\Phi$:
\begin{align*}
\Phi(Z(\mu,  \nu))&=\{[(\one_{Z(\mu,\nu)},\nu x)]:x\in s(\nu)E^\infty \}\\
&=\{[(\one_{Z(\mu,\nu)},y)]:y\in Z(\nu) \}\\
&=\{[(\one_{Z(\mu,\nu)},y)]:y\in \dom(\one_{Z(\mu,\nu)}) \}\\
&=Z(\one_{Z(\mu,\nu)}).
\end{align*}
Thus $\Phi$ is open.  

To see that $\Phi$ is continuous, suppose that
\[
g_i:=(\mu_i x_i,|\mu_i|-|\nu_i|,\nu_i x_i)\to g:=(\mu  x ,|\mu |-|\nu |,\nu  x )
\]
in $G_E$.  Then  there exist $y_i\in E^\infty$ such that
$\mu_i x_i=\mu y_i$ and $\nu_i x_i=\nu y_i$  eventually, and $\nu y_i\to \nu x$ in $E^\infty$.
Since $|\mu_i|-|\nu_i|=|\mu|-|\nu|$ eventually we must have $g_i=(\mu y_i, |\mu|-|\nu|, \nu y_i)$ eventually. Now let $Z(n)$ be a basic open neighbourhood of $\Phi(g)=[(\one_{Z(\mu,\nu)}, \nu x)]$. It suffices to show that $\Phi(g_i)\in Z(n)$ eventually.  We have $(\one_{Z(\mu,\nu)}, \nu x)\sim (n, \nu x)$.  First suppose that $\nu x$ is isolated.  Then $\nu y_i=\nu x$ eventually, and
\[
\Phi(g_i)=[(\one_{Z(\mu,\nu)}, \nu y_i)]=[(\one_{Z(\mu,\nu)}, \nu x)]\in Z(n).
\]
Second, suppose that $\nu x$ is not isolated.  Then there exists an open neighbourhood $V\subset \dom(n)\cap\dom(\one_{Z(\mu,\nu)})$ of $\nu x$ such that $\alpha_n(w)=\alpha_{\one_{Z(\mu,\nu)}}(w)$ for all $w\in V$. But $\nu y_i\in V$ eventually, and hence
\[
\Phi(g_i)=[(\one_{Z(\mu,\nu)}, \nu y_i)]=[(n, \nu y_i)]\in Z(n)
\]
eventually.  Thus $\Phi$ is continuous. 

We have shown that $\Phi$ is a groupoid isomorphism and a homeomorphism. It follows that composition and  inversion in $W_E$ are  continuous. Hence $W_E$ is a topological groupoid and is isomorphic, as a topological groupoid, to $G_E$.
 \end{proof}


\section{A homeomorphism of infinite-path spaces from Stone duality}\label{section-Stone}
Throughout this section,  $E$ and $F$ are row-finite directed graphs with no sources and  $R$ is an integral domain with identity.
Here we will show that a ring $*$-isomorphism $\HoHo:L_R(E)\to L_R(F)$ 
which  maps the diagonal $D(E)$ onto $D(F)$ induces a homeomorphism $\kappa:E^\infty\to F^\infty$ of infinite path spaces such that
\begin{equation}\label{eq:kappa-again}
\HoHo(\one_L)\circ\kappa=\one_L
\end{equation}
for compact open subsets $L$ of $E^\infty$. If $\pi|_{D(E)}$ is an $R$-algebra homomorphism, then \eqref{eq:kappa-again} holds for all $d\in D(E)$.   We think this result is known but we haven't found a reference for it, and so we give the details here. 

We start by recalling how Stone duality (see, for example, \cite[Theorem~4]{Stone}  and \cite[Chapter~IV]{Burris}) gives a homeomorphism of the spaces of ultrafilters of the  Boolean algebras $B(E)$ and $B(F)$ of idempotents in 
$D(E)$ and $D(F)$.

Let $B=B(\vee, \wedge, 0, \leq)$ be a Boolean algebra; we do not assume $B$ has a unit. A \emph{filter} $U$ of $B$ is a subset of $B$ such that
\begin{enumerate}
\item $x, y\in U$ implies $x\wedge y\in U$;
\item $x\in U$ and $x\leq y$ implies $y\in U$.
\end{enumerate}
An \emph{ultrafilter} is a filter $U$ which is maximal with respect to $0\notin U$. 
Let \[B^*:=\{U\subset B: U \text{\ is an ultrafilter}\}.\]
For $b\in B$, let 
$
N_b:=\{U\in B^*: b\in U\}
$.
Then $\{N_b:b\in B\}$ is a cover of $B^*$, and hence is a subbasis for a topology; we equip $B^*$ with this topology. 

Now let $\pi:B\to C$ be a homomorphism of Boolean algebras.  If $U\in C^*$, then 
$\pi^{-1}(U)$ is in $B^*$, and this gives a function 
\begin{equation}\label{ultrafiltermap}
\tilde\pi:C^*\to B^*\, \quad\tilde\pi(U)=\pi^{-1}(U).
\end{equation}
Since $\pi^{-1}(N_b)=\cup_{c\in\pi^{-1}(\{b\})}N_c$ it follows that $\tilde\pi$ is continuous.  In particular, if $\pi$ is an isomorphism, then  $\tilde\pi$ is a homeomorphism. 

Now let $E$ be a row-finite directed graph with no sources,  let $R$ be an integral domain with identity
and let \[B(E)=\{p\in D(E): p=p^2\}\subset L_R(E).\] Then $B(E)$ is a Boolean algebra with 
$p\vee q=p+q-pq$, $p\wedge q=pq$ and $p\leq q$ if and only if $pq=p$.  Since $R$ is an 
integral domain, every element of $B(E)$ is the characteristic function of a compact open subset $L$ of 
$E^\infty$, and then $\one_L\vee \one_M=\one_{L\cup M}$, $\one_L\wedge \one_M=\one_{L\cap M}$,
and $\one_L\leq \one_M$ if and only if $L\subseteq M$.

\begin{prop}\label{infinitepathspace-uf}
Let $B(E)$ be the Boolean algebra of idempotents of $D(E)$ and $B(E)^*$ the topological space of ultrafilters of $B(E)$. Define $\rho=\rho_E:E^\infty\to B(E)^*$ by
\[
\rho(x)=\{\one_L\in D(E): \text{$L$ is a compact open neighbourhood of $x$}\}.
\]
Then $\rho$ is a homeomorphism with $\rho^{-1}(U)$ the unique element of 
\begin{equation}\label{singleton}
\bigcap_{\one_L\in U}L.
\end{equation}
\end{prop}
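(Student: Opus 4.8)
The plan is to verify directly that $\rho$ lands in $B(E)^*$, is a bijection, and is open and continuous, using only that $E^\infty$ is a locally compact Hausdorff space with the basis $\{Z(\mu):\mu\in P(E)\}$ of compact open sets and that $B(E)$ is exactly the set of characteristic functions of compact open subsets of $E^\infty$. The first observation to record is that, since each $\one_L\in B(E)$ has $L$ open, $L$ is a neighbourhood of $x$ if and only if $x\in L$; hence $\rho(x)=\{\one_L\in B(E):x\in L\}$. From this description it is routine that $\rho(x)$ is a filter: closure under $\wedge$ comes from $x\in L\cap M$, and the up-set condition from $L\subseteq M$. Moreover $0=\one_\emptyset\notin\rho(x)$ because $x\in L$ forces $L\neq\emptyset$. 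For maximality, if $\one_M\notin\rho(x)$ then $x\notin M$; since $M$ is clopen, some basic set $Z(x(0,k))$ is disjoint from $M$, and then $\one_{Z(x(0,k))}\in\rho(x)$ together with $\one_M$ would generate $0$. Thus $\rho(x)$ is an ultrafilter and $\rho$ is well defined.

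Injectivity is easy: if $x\neq y$ they differ in some coordinate, so $y\notin Z(x(0,k))$ for $k$ large, whence $\one_{Z(x(0,k))}\in\rho(x)\setminus\rho(y)$.

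Surjectivity is the main point, and the only place compactness is used. Given an ultrafilter $U$, I first note $U\neq\emptyset$, since any nonzero $b$ generates a filter avoiding $0$, so the empty filter is not maximal; fix $\one_{L_0}\in U$. The family $\{L:\one_L\in U\}$ has the finite intersection property: $\one_{L_1},\dots,\one_{L_n}\in U$ gives $\one_{L_1\cap\cdots\cap L_n}=\one_{L_1}\wedge\cdots\wedge\one_{L_n}\in U$, which is nonzero since $0\notin U$, so $L_1\cap\cdots\cap L_n\neq\emptyset$. Viewing $\{L_0\cap L:\one_L\in U\}$ as closed subsets of the compact set $L_0$, compactness gives $\bigcap_{\one_L\in U}L\neq\emptyset$. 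Choosing $x$ in this intersection, $\one_L\in U$ implies $x\in L$, so $U\subseteq\rho(x)$; since $\rho(x)$ is a filter avoiding $0$ and $U$ is maximal, $U=\rho(x)$. This proves surjectivity and identifies $\rho^{-1}(U)=x$. The same argument shows every point of $\bigcap_{\one_L\in U}L$ maps to $U$, so by injectivity this intersection is the singleton $\{\rho^{-1}(U)\}$, which is exactly \eqref{singleton}. The subtlety to watch here is that $B(E)$ has no unit when $E^\infty$ is noncompact, so one cannot intersect over all of $B(E)^*$; restricting to subsets of the fixed compact $L_0\in U$ is what makes the compactness argument go through, and is the step I expect to be the main obstacle.

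It remains to see $\rho$ is a homeomorphism. For a subbasic set $N_{\one_L}=\{U:\one_L\in U\}$ of $B(E)^*$ we have $\rho^{-1}(N_{\one_L})=\{x:x\in L\}=L$, which is open, so $\rho$ is continuous. Conversely $\rho(L)=N_{\one_L}$: the inclusion $\subseteq$ is immediate, and if $U\in N_{\one_L}$ then $U=\rho(x)$ for some $x$ with $x\in L$, so $U\in\rho(L)$. Since the compact open sets $L$, in particular the $Z(\mu)$, form a basis for $E^\infty$ and are carried to the open sets $N_{\one_L}$, the bijection $\rho$ is open. Hence $\rho$ is a homeomorphism.
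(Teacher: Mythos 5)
Your proof is correct and follows essentially the same route as the paper's: verify $\rho(x)$ is an ultrafilter, get injectivity from the cylinders $Z(x(0,k))$, get surjectivity from the finite intersection property inside a fixed compact $L_0$ with $\one_{L_0}\in U$ (exactly the paper's device for handling the absence of a unit), and establish continuity and openness via $\rho^{-1}(N_{\one_L})=L$ and $\rho(L)=N_{\one_L}$. The only cosmetic difference is in the maximality step, where you find a basic set $Z(x(0,k))$ disjoint from $M$ (using that compact open sets are closed) rather than the paper's convergent-sequence argument, and you make explicit the observation that ultrafilters are nonempty; both are fine.
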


\begin{proof} Let $x\in E^\infty$. 
It is straightforward to check that $\rho(x)$ is a filter. To see that it is an ultrafilter, 
suppose $U$ is a filter such that $0 \notin U$  and $\rho(x)\subset U$.  It 
suffices to show $U\subset \rho(x)$. 
Let $\one_L$ be in $U$.  Since $\rho(x)\subset U$, $\one_{Z(x(0,m))}\in U$ for all $m \in \N$. 
Thus $0 \notin U$ implies $L\cap Z(x(0,m))\neq \emptyset$ for all $m \in \N$.  
For each $m \in \N$, choose $x^m\in L\cap Z(x(0,m))$.    Since $Z(x(0,m))$ is a 
neighbourhood base of $x$, $x^m\to x$.  Thus $x \in L$ because $L$ is closed.  
Therefore $1_L\in \rho(x)$ as needed.

To see that $\rho $ is injective, suppose $x,y\in E^\infty$ with $\rho(x)=\rho(y)$. 
Fix $i\in \N$. Then $\one_{Z(x(0,i))}\in\rho(x)=\rho(y)$ implies there exists $j$ 
such that $Z(y(0,j))\subset Z(x(0,i))$. Thus $x(0,i)$ is an initial segment of $y$. Since $i$ was fixed, $x=y$. Thus $\rho$ is injective.

To see that $\rho$ is surjective, let $U\in B(E)^*$. We claim that 
\eqref{singleton}
is a singleton $\{x\}$,  and that $\rho(x)=U$.  Let $\one_M\in U$.
Then 
\[
\bigcap_{\one_L\in U}L=M\cap \bigcap_{\one_L\in U}L=\bigcap_{\one_L\in U}(M\cap L).
\]
Since $U$ is an ultrafilter, $\{M\cap L:\one_L\in U\}$ has the finite intersection property 
in the compact space $M$, and hence the intersection \eqref{singleton} is non-empty.  Let $x\in \bigcap_{\one_L\in U}L$.  
Then $x\in L$ for all $L$ with  $\one_L\in U$. 
By the definition  of $\rho(x)$, $U\subset \rho(x)$, but since $U$ is an ultrafilter, $U=\rho(x)$.  
Notice that injectivity of $\rho$ now shows that the intersection \eqref{singleton} is $\{x\}$.  
We have shown $\rho$ is surjective with inverse $\rho^{-1}(U)$ the unique element of \eqref{singleton}. 

To see that $\rho$ is continuous, it suffices to show that inverse images of the subbasic 
neighbourhoods are open. So fix $\one_L \in B(E)$. Then $N_{\one_L}=\{U\in B(E)^*: \one_L\in U\}$ 
is a typical subbasic neighbourhood. We will show that $\rho^{-1}(N_{\one_L})=L$. First, let $x\in \rho^{-1}(N_{\one_L})$. Then there exists $U\in  N_{\one_L}$ such that $x=\rho^{-1}(U)=\cap_{\one_M\in U} M\subset L$. Thus $x\in L$ and $\rho^{-1}(N_{\one_{L}})\subset L$. Second, let $x\in L$. Then $\one_L\in \rho(x)$, and hence $\rho(x)\in N_{\one_L}$, that is, $x\in \rho^{-1}(N_{\one_L})$. Thus $\rho^{-1}(N_{\one_L})=L$, and since $L$ is open it follows that $\rho$ is continuous.

To see that $\rho$ is open it suffices to show that images of basic neighbourhoods $Z(\mu)$ in $E^\infty$ are open.  We will show that that $\rho(Z(\mu))=N_{1_{Z(\mu)}}$. First let $U\in \rho(Z(\mu))$, that is  $U=\rho(x)$ for some   $x\in Z(\mu)$. Then  $Z(x(0,|\mu|))= Z(\mu)$ gives $\one_{Z(\mu)}\in\ U$. Thus $U\in N_{\one_{Z(\mu)}}$, and hence  $\rho(Z(\mu))\subset N_{1_{Z(\mu)}}$. Second, let $U\in N_{1_{Z(\mu)}}$. Then $1_{Z(\mu)}\in U$. Now $\rho^{-1}(U)=\cap_{\one_L\in U}L\subset Z(\mu)$, that is, $U\in \rho(Z(\mu))$. Thus $\rho(Z(\mu))=N_{1_{Z(\mu)}}$ and is open, and hence $\rho$ is open. 
\end{proof}

\begin{prop}\label{prop-definekappa} Suppose  that $\HoHo: L_R(E)\to L_R(F)$  is a 
ring isomorphism such that $\pi(D(E))= D(F)$.   Then there is a homeomorphism $\kappa: E^\infty\to F^\infty$   such that for every compact open subset $L$ of $E^\infty$ we have 
\begin{equation}\label{kappa-iff}
x\in L\quad\Longleftrightarrow\quad \kappa(x)\in\supp(\pi(\one_L)). 
\end{equation}
Further,   $\supp(d)=\supp(\pi(d)\circ \kappa)$  for all $d\in D(E)$.
\end{prop}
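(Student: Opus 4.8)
The plan is to build $\kappa$ from the two homeomorphisms $\rho_E,\rho_F$ of Proposition~\ref{infinitepathspace-uf} together with the Stone dual of $\pi$. First I would note that, being a ring isomorphism carrying $D(E)$ onto $D(F)$, $\pi$ sends idempotents to idempotents and hence restricts to a bijection $\phi:=\pi|_{B(E)}\colon B(E)\to B(F)$. Because $\pi$ respects sums and products it preserves $p\wedge q=pq$, $p\vee q=p+q-pq$, $0$, and the order $p\le q\iff pq=p$, so $\phi$ is an isomorphism of Boolean algebras. By the discussion of \eqref{ultrafiltermap} its Stone dual $\tilde\phi\colon B(F)^*\to B(E)^*$ is then a homeomorphism, and so is its inverse, which sends $U\in B(E)^*$ to $\phi(U)=\pi(U)\in B(F)^*$. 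I would then set
\[
\kappa:=\rho_F^{-1}\circ\tilde\phi^{-1}\circ\rho_E\colon E^\infty\to F^\infty,
\]
a composite of homeomorphisms; by construction $\rho_F(\kappa(x))=\pi(\rho_E(x))$ for every $x\in E^\infty$.

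Next I would verify \eqref{kappa-iff}. Fix a compact open $L\subseteq E^\infty$. Since $R$ is an integral domain, $\pi(\one_L)$ is again a characteristic function, say $\pi(\one_L)=\one_M$ with $M\subseteq F^\infty$ compact open, so that $\supp(\pi(\one_L))=M$. Using $\rho_F(\kappa(x))=\pi(\rho_E(x))$ and the description of $\rho_E,\rho_F$ in Proposition~\ref{infinitepathspace-uf}, for $x\in E^\infty$ I obtain the chain: $\kappa(x)\in M$ iff $\one_M\in\rho_F(\kappa(x))=\pi(\rho_E(x))$ iff $\one_L=\pi^{-1}(\one_M)\in\rho_E(x)$ iff $L$ is a compact open neighbourhood of $x$ iff $x\in L$. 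This is exactly \eqref{kappa-iff}; in particular $\kappa^{-1}(M)=L$.

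For the final claim, write $L:=\supp(d)$, a compact open set with $\one_L\in B(E)$; since multiplication in $D(E)$ is pointwise multiplication on the unit space, $\one_L d=d$. Applying $\pi$ gives $\pi(\one_L)\pi(d)=\pi(d)$, whence $\supp(\pi(d))\subseteq\supp(\pi(\one_L))=M$, and therefore $\kappa^{-1}(\supp(\pi(d)))\subseteq\kappa^{-1}(M)=L=\supp(d)$. As $\supp(\pi(d)\circ\kappa)=\kappa^{-1}(\supp(\pi(d)))$, this gives one inclusion. For the reverse I would repeat the construction for the inverse isomorphism $\pi^{-1}\colon L_R(F)\to L_R(E)$: tracing the formula for $\kappa$ shows that the homeomorphism it produces is exactly $\kappa^{-1}$, so applying the inclusion just proved to $\pi^{-1}$ and to the element $\pi(d)\in D(F)$ yields $\kappa(\supp(d))\subseteq\supp(\pi(d))$, i.e. $\supp(d)\subseteq\kappa^{-1}(\supp(\pi(d)))$. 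Combining the two inclusions gives $\supp(d)=\supp(\pi(d)\circ\kappa)$.

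I expect the main obstacle to be precisely this reverse inclusion in the support statement: because $\pi$ is only assumed to be a ring (and not an $R$-algebra) isomorphism, one cannot track scalar coefficients through $\pi$, so a direct argument from a normal form of $d$ is clumsy. The symmetry trick with $\pi^{-1}$, together with the identification of its induced homeomorphism as $\kappa^{-1}$, is what keeps this step clean; everything else reduces to unwinding the ultrafilter descriptions of $\rho_E$ and $\rho_F$.
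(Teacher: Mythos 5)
Your proposal is correct and follows essentially the same route as the paper: the identical Stone-duality construction $\kappa=\rho_F^{-1}\circ\tilde\pi^{-1}\circ\rho_E$, verification of \eqref{kappa-iff}, and one support inclusion combined with the symmetry between $\pi$ and $\pi^{-1}$. If anything your local arguments streamline the paper's: you read off \eqref{kappa-iff} directly from the intertwining identity $\rho_F\circ\kappa=\pi\circ\rho_E$ where the paper argues via the intersection $\bigcap_{\one_M\in U}M$ and a finite-intersection-property step, you replace the paper's normal-form computation for $\supp(\pi(d)\circ\kappa)\subseteq\supp(d)$ by the identity $\one_{\supp(d)}d=d$ and multiplicativity of $\pi$, and you make explicit the point the paper leaves as ``by symmetry,'' namely that the construction applied to $\pi^{-1}$ produces exactly $\kappa^{-1}$.
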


\begin{proof} The ring isomorphism $\pi$ restricts to give an isomorphism of the Boolean algebra $B(E)=\{p\in D(E): p=p^2\}$ onto $B(F)$, and induces a homeomorphism $\tilde\pi: B(F)\to B(E)$ of the corresponding spaces of ultrafilters as given at \eqref{ultrafiltermap}.  Let $\rho_E:E^\infty\to B(E)$ be the homeomorphism of Proposition~\ref{infinitepathspace-uf}.  Then 
\[
\kappa:=\rho_F^{-1}\circ\tilde\pi^{-1}\circ\rho_E
\]
is a homeomorphism such that $\kappa (x)$ is the unique element of \[\bigcap_{\one_M\in\pi(\rho_E(x))}M.\]

To see $\kappa$ satisfies \eqref{kappa-iff}, let $L$ be a compact open subset of $E^\infty$. First, let $x\in L$.  Since $L$ is open, there exists $i\in\N$ such that $Z(x(0,i))\subset L$. Thus $\one_L\in \rho_E(x)$, and $\pi(\one_L)\in \pi(\rho_E(x))$. Now
\[
\{\kappa(x)\}=\bigcap_{\one_M\in\pi(\rho_E(x))}M\subset\supp(\pi(\one_L)). 
\]
Thus $x\in L$ implies $\kappa(x)\in \supp(\pi(\one_L))$.  Second, let $y\in\supp(\pi(\one_L))$. Say $y=\kappa(x)$. For all $M$ such that $\one_M\in \pi(\rho_E(x))$ we have $\kappa(x)\in M$, and hence  $\kappa(x)\in M\cap \supp(\pi(\one_L))$. In other words, for all $\one_N\in\rho_E(x)$ we have $\kappa(x)\in \supp(\pi(\one_N))\cap \supp(\pi(\one_L))$.  Now
\[
0\neq \one_{\supp(\pi(\one_N))\cap \supp(\pi(\one_L))}=\pi(\one_L)*\pi(\one_N)=\pi(\one_L*\one_M)=\pi(\one_{L\cap N}).
\]
Thus $L\cap N\neq\emptyset$ for all such $N$. Because $\rho_E(x)$ is an ultrafilter, the collection $\{L\cap N: \one_N\in\rho_E(x)\}$ has the finite intersection property in the compact space $L$. Thus 
\[
\emptyset\neq \bigcap_{\one_N\in\rho_E(x)}(L\cap N)=L\cap \bigcap_{\one_N\in\rho_E(x)}N=L\cap\{x\}.\]  
Thus $x\in L$ as required. Thus $\kappa$ satisfies \eqref{kappa-iff}. 

For the final assertion, let $d=\sum_{\alpha\in F} r_\alpha\one_{Z(\alpha)}\in D(E)$ in normal form.  Using only that $\pi$ is a ring homomorphism, we have
\begin{align*}
\pi(d) \circ \kappa
&= \sum_{\alpha \in F} \pi(r_\alpha 1_{Z(\alpha)}) \circ \kappa
= \sum_{\alpha \in F} \pi(r_\alpha 1_{Z(\alpha)}^2) \circ \kappa
\\
&= \sum_{\alpha \in F} \big(\pi(r_\alpha 1_{Z(\alpha)}) \pi(1_{Z(\alpha)})\big) \circ \kappa
\\
&= \sum_{\alpha \in F} \big(\pi(r_\alpha 1_{Z(\alpha)}) \circ \kappa\big)  \big(\pi(1_{Z(\alpha)})\circ\kappa\big),
\\
\intertext{which, using  \eqref{kappa-iff}, is just}
&= \sum_{\alpha \in F} \big(\pi(r_\alpha 1_{Z(\alpha)}) \circ \kappa\big)  1_{Z(\alpha)}.
\end{align*}
It follows that  $\supp(\pi(d) \circ \kappa) \subseteq \supp(d)$.  

By symmetry,  $\supp(\pi^{-1}(d') \circ \kappa^{-1}) \subseteq \supp(d')$ for all $d' \in D(F)$. That is,
\[
d'(y) = 0 \implies \pi^{-1}(d')(\kappa^{-1}(y)) = 0\quad\text{ for all $d' \in D(F)$ and $y \in F^\infty$.}
\]
Apply this with  $d' = \pi(d)$ and $y = \kappa(x)$ to get
\[
\pi(d)(\kappa(x)) = 0 \implies \pi^{-1}(\pi(d))(\kappa^{-1}(\kappa(x))) = 0 \implies d(x) = 0.
\]
It follows that $\supp(d) \subseteq \supp(\pi(d)\circ\kappa)$.  Thus $\supp(d) \subseteq \supp(\pi(d)\circ\kappa)$ for all $d\in D(E)$.
\end{proof}

\begin{lemma}\label{lem-nice}
Suppose  that $\HoHo: L_R(E)\to L_R(F)$  is a ring $*$-isomorphism such that $\pi(D(E))=D(F)$. Let $\kappa: E^\infty\to F^\infty$ be the homeomorphism of Proposition~\ref{prop-definekappa}. Then $d=\pi(d)\circ \kappa$ for all $d\in D(E)$ if and only if $\pi|_{D(E)}$ is an $R$-algebra homomorphism.
\end{lemma}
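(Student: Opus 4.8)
The plan is to reduce the whole statement to a single observation about idempotents, namely that $\pi(\one_L)\circ\kappa=\one_L$ for every compact open $L\subseteq E^\infty$, and then to exploit that precomposition by the bijection $\kappa$ is an $R$-linear injection on functions.

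First I would establish this idempotent identity. Since $\one_L$ is an idempotent of $D(E)$ and $\pi$ is a ring homomorphism with $\pi(D(E))=D(F)$, the image $\pi(\one_L)$ is an idempotent of $D(F)$; because $R$ is an integral domain, every such idempotent is the characteristic function of its compact open support, so $\pi(\one_L)=\one_M$ with $M=\supp(\pi(\one_L))$. Then for $x\in E^\infty$ we have $\pi(\one_L)(\kappa(x))=\one_M(\kappa(x))$, which equals $1$ exactly when $\kappa(x)\in M$, and by \eqref{kappa-iff} of Proposition~\ref{prop-definekappa} this holds exactly when $x\in L$. Hence $\pi(\one_L)\circ\kappa=\one_L$.

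For the backward implication, suppose $\pi|_{D(E)}$ is an $R$-algebra homomorphism. Writing an arbitrary $d\in D(E)=\lsp_R\{\one_{Z(\mu)}\}$ as $d=\sum_\alpha r_\alpha\one_{Z(\alpha)}$, I would use $R$-linearity of $\pi|_{D(E)}$ and of precomposition by $\kappa$ to compute $\pi(d)\circ\kappa=\sum_\alpha r_\alpha\bigl(\pi(\one_{Z(\alpha)})\circ\kappa\bigr)=\sum_\alpha r_\alpha\one_{Z(\alpha)}=d$, where the middle step is exactly the idempotent identity. For the forward implication, assume $d=\pi(d)\circ\kappa$ for all $d\in D(E)$. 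As $\pi$ is a ring isomorphism, $\pi|_{D(E)}$ is already additive and multiplicative, so it remains only to prove $R$-linearity, i.e. $\pi(rd)=r\pi(d)$ for $r\in R$ and $d\in D(E)$. Both sides lie in $D(F)$; applying the hypothesis to $rd$ gives $\pi(rd)\circ\kappa=rd$, while applying it to $d$ and using $R$-linearity of precomposition gives $\bigl(r\pi(d)\bigr)\circ\kappa=r\bigl(\pi(d)\circ\kappa\bigr)=rd$. Since $\kappa$ is a bijection, precomposition by $\kappa$ is injective on functions, and therefore $\pi(rd)=r\pi(d)$.

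The only step requiring genuine care — the main obstacle — is the idempotent identity, as it is the one place that actually invokes Proposition~\ref{prop-definekappa} and the integral-domain hypothesis (needed so that $\pi$ carries characteristic functions of compact open sets to characteristic functions). Once that identity is in hand, both directions are formal manipulations with the $R$-linear, injective operator $f\mapsto f\circ\kappa$.
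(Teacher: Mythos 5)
Your proposal is correct and takes essentially the same route as the paper: the backward implication expands $d$ in normal form and reduces to the idempotent identity $\pi(\one_L)\circ\kappa=\one_L$ (which the paper extracts from \eqref{kappa-iff}, while you helpfully make explicit the intermediate fact that, over an integral domain, idempotents of $D(F)$ are characteristic functions of compact open sets, so the support equality upgrades to equality of functions), and the forward implication deduces $R$-linearity from the hypothesis via invertibility of $\kappa$, exactly as the paper's pointwise computation with $\kappa^{-1}$ does, your appeal to injectivity of $f\mapsto f\circ\kappa$ being a trivial reformulation. No gaps.
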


\begin{proof}
First suppose that $d=\pi(d)\circ \kappa$ for all $d\in D(E)$. Let $d,d'\in D(E)$ and $r\in R$.  Then
\[\pi(d+rd')(y)=(d+rd')(\kappa^{-1}(y))=d(\kappa^{-1}(y))+rd'(\kappa^{-1}(y))=(\pi(d)+r\pi(d'))(y)\] and hence $\pi|_{D(E)}$ is an $R$-algebra homomorphism.

Conversely, suppose that $\pi|_{D(E)}$ is an $R$-algebra homomorphism. Let $d\in D$ and write $d=\sum_{\alpha\in F}r_\alpha \one_{Z(\alpha)}$ in normal form. By \eqref{kappa-iff} in Proposition~\ref{prop-definekappa}  we have  $\one_{Z(\alpha)}=\pi(\one_{Z(\alpha)})\circ \kappa$. Thus
\[
\pi(d)\circ \kappa=\big( \sum_{\alpha\in F}r_\alpha \pi(\one_{Z(\alpha)}) \big)\circ\kappa=\sum_{\alpha\in F}r_\alpha \pi(\one_{Z(\alpha)})\circ\kappa= \sum_{\alpha\in F}r_\alpha \one_{Z(\alpha)}=d.\qedhere
\]
\end{proof}

\begin{lemma}
\label{lem:kappa-properties}
Suppose  that $\HoHo: L_R(E)\to L_R(F)$  is a ring $*$-isomorphism such that $\pi(D(E))=D(F)$. 
Let $\kappa: E^\infty\to F^\infty$ be the homeomorphism of Proposition~\ref{prop-definekappa}. 
\begin{enumerate}
\item\label{it:kappa1} If  $x\in E^\infty$ is isolated, then $\HoHo(p_x)=p_{\kappa(x)}$.
\item\label{it:kappa2} Let $n \in N(E)$. Then
\begin{enumerate}
\item \label{subit:kappa2a} $x\in \dom(n)\Longleftrightarrow \kappa(x)\in \dom(\HoHo(n))$; and
\item \label{subit:kappa2b} $\kappa(\alpha_n(x))=\alpha_{\HoHo(n)}(\kappa(x))$ for $x\in \dom(n)$.
\end{enumerate}
\end{enumerate}
\end{lemma}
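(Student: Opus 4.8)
The plan is to read off all three assertions from the two properties of $\kappa$ furnished by Proposition~\ref{prop-definekappa}: the membership criterion \eqref{kappa-iff}, and the pointwise support-transfer $\supp(d)=\supp(\pi(d)\circ\kappa)$ for $d\in D(E)$. Throughout I write $m:=\pi(n)$. Since $\pi$ is a $*$-isomorphism carrying $D(E)$ onto $D(F)$, it sends idempotents of $D(E)$ to idempotents of $D(F)$ and normalizers to normalizers, so $m\in N(F)$ and Lemma~\ref{baby steps} applies to $m$; moreover, as $R$ is an integral domain, idempotents of $D(F)$ are precisely the characteristic functions of compact open subsets of $F^\infty$.

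For \eqref{it:kappa1}, I observe that $p_x=\one_{\{x\}}$ is an idempotent of $D(E)$, so $\pi(p_x)=\one_K$ where $K=\supp(\pi(p_x))$ is compact open in $F^\infty$. Applying \eqref{kappa-iff} with $L=\{x\}$ gives $w\in\{x\}\Longleftrightarrow\kappa(w)\in K$ for all $w\in E^\infty$; since $\kappa$ is a bijection onto $F^\infty$, this forces $K=\{\kappa(x)\}$. Hence $\kappa(x)$ is isolated and $\pi(p_x)=\one_{\{\kappa(x)\}}=p_{\kappa(x)}$.

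For \eqref{subit:kappa2a}, recall $\dom(n)=\supp(n^*n)$, and $\dom(m)=\supp(m^*m)=\supp(\pi(n^*n))$ because $\pi$ is a $*$-homomorphism. Since $n^*n\in D(E)$, support-transfer gives $\supp(n^*n)=\supp(\pi(n^*n)\circ\kappa)$; reading off nonvanishing sets yields $x\in\dom(n)\Longleftrightarrow\pi(n^*n)(\kappa(x))\neq 0\Longleftrightarrow\kappa(x)\in\dom(m)$.

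The substantive step is \eqref{subit:kappa2b}, and I expect it to be the main obstacle, since it must encode an equality of points into support data that $\kappa$ preserves. Fix $x\in\dom(n)$, so $\kappa(x)\in\dom(m)$ by \eqref{subit:kappa2a}. The device is a membership criterion for $\alpha$: evaluating \eqref{eq:dn2} with $d=\one_L$ at the unit $x$ gives $(n^*\one_L n)(x)=\one_L(\alpha_n(x))\,(n^*n)(x)$, so, as $R$ is an integral domain and $(n^*n)(x)\neq 0$, we obtain $\alpha_n(x)\in L\Longleftrightarrow (n^*\one_L n)(x)\neq 0$ for every compact open $L\subseteq E^\infty$; the identical computation in $L_R(F)$ yields $\alpha_m(\kappa(x))\in M\Longleftrightarrow (m^*\one_M m)(\kappa(x))\neq 0$ for compact open $M\subseteq F^\infty$. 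Now fix compact open $M\subseteq F^\infty$ and set $L:=\kappa^{-1}(M)$, which is compact open. Since $\one_L$ is idempotent, $\pi(\one_L)=\one_{M'}$ with $M'=\supp(\pi(\one_L))$, and \eqref{kappa-iff} identifies $M'=\kappa(L)=M$; thus $\pi(n^*\one_L n)=m^*\one_M m$. Chaining the two criteria through support-transfer at the point $x$,
\[
\kappa(\alpha_n(x))\in M\iff\alpha_n(x)\in L\iff (n^*\one_L n)(x)\neq 0\iff (m^*\one_M m)(\kappa(x))\neq 0\iff\alpha_m(\kappa(x))\in M.
\]
As the compact open subsets of $F^\infty$ separate points, this forces $\kappa(\alpha_n(x))=\alpha_{\pi(n)}(\kappa(x))$.
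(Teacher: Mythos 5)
Your proposal is correct and follows essentially the same route as the paper: part \eqref{it:kappa1} via \eqref{kappa-iff} applied to $L=\{x\}$, part \eqref{subit:kappa2a} via the support-transfer identity $\supp(d)=\supp(\HoHo(d)\circ\kappa)$ with $d=n^*n$, and part \eqref{subit:kappa2b} via \eqref{eq:dn2} evaluated at units with $d$ a characteristic function, combined with support transfer for the diagonal element $n^*\one_L n$. The only difference is organizational: the paper runs the same computation as a proof by contradiction with a single separating compact open set $M$ (working with $\HoHo^{-1}(\one_M)$ where you identify $\HoHo(\one_{\kappa^{-1}(M)})=\one_M$ explicitly), whereas you argue directly for all $M$ and conclude since compact open sets separate points of $F^\infty$.
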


\begin{proof}  Let $x\in E^\infty$ be an isolated path. Since $\kappa$ is a homeomorphism, $\kappa(x)$ is an isolated path in $F^\infty$.  Recall that $p_x=\one_{\{x\}}$. Applying \eqref{kappa-iff} to the compact open set  $\{x\}$ gives \eqref{it:kappa1}.

For  $n\in N(E)$ we have $n^*n\in D(E)$. Using  Proposition~\ref{prop-definekappa} we have
\begin{align*}x\in\dom(n)=\supp(n^*n)&\Longleftrightarrow 0\neq n^*n(x)\\&\Longleftrightarrow 0\neq \pi(n^*n)(\kappa(x))\\
&\Longleftrightarrow \kappa(x)\in\dom(\pi(n))=\supp(\pi(n)^*\pi(n)),\end{align*}
giving
\eqref{subit:kappa2a}.

For \eqref{subit:kappa2b}, suppose, aiming for a contradiction, that there exists $z\in\dom(n)$ such that 
$\kappa(\alpha_n(z))\neq \alpha_{\HoHo(n)}(\kappa(z))$.  Then there exists a compact open set $M\subset F^\infty$ such that $\alpha_{\HoHo(n)}(\kappa(z)) \in M$ but $\kappa(\alpha_n(z))\notin M$. By \eqref{subit:kappa2a}, $\kappa(z)\in\dom(\pi(n))$. 
 Thus
\begin{align*}
0\neq \one_M(\alpha_{\HoHo(n)}(\kappa(z)))\HoHo(n)^*\HoHo(n)(\kappa(z)) 
&=\big( \one_M\circ\alpha_{\pi(n)}\pi(n)^*\pi(n) \big)(\kappa(z)) \\
&=(\HoHo(n)^*\one_M\HoHo(n))(\kappa(z))\quad\text{(using \eqref{eq:dn2})}\\
&=\HoHo\big( n^*\HoHo\inv (\one_M)n \big)(\kappa(z)).
\end{align*}
By Proposition~\ref{prop-definekappa} we have $0 \neq n^*\pi^{-1}(\one_M)n(z)$, and using \eqref{eq:dn2} again we see
\[0\neq n^*\HoHo\inv (\one_M)n(z)=\HoHo\inv (\one_M)(\alpha_n(z))n^*n(z).\]
Since $R$ is an integral domain we must have $\alpha_n(z)\in \supp(\HoHo\inv (\one_M))$, a contradiction. 
Thus $\kappa(\alpha_n(x))= \alpha_{\HoHo(n)}(\kappa(x))$ for all $x\in\dom(n)$, giving (\ref{subit:kappa2b}).
\end{proof}

\section{The main theorem}\label{sec-main} Throughout this section,  $E$ and $F$ are row-finite directed graphs with no sources and  $R$ is an integral domain with identity. 
In this section we show that a diagonal-preserving ring $*$-isomorphism of $L_R(E)$ onto $L_R(F)$  
induces an isomorphism of the groupoid $G_E$ onto $G_F$. It then follows, for example, that the $C^*$-algebras $C^*(E)$ and $C^*(F)$ are isomorphic. 

\begin{prop}
\label{prop:diagiso}
Suppose  that $\HoHo: L_R(E)\to L_R(F)$  is a ring $*$-isomorphism  such that $\pi(D(E))=D(F)$. 
Let $\kappa: E^\infty\to F^\infty$ be the homeomorphism of Proposition~\ref{prop-definekappa}.
Then $\Psi=\Psi_{\HoHo}:W_E\to W_F$ defined by 
$
\Psi([(n,x)])=[(\HoHo(n),\kappa(x))]
$
is an isomorphism of the Weyl groupoids.
\end{prop}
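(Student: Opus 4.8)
The plan is to check in turn that $\Psi$ is well-defined, that it is a groupoid homomorphism, and that it is a bijection, with the homeomorphism properties of $\kappa$ collected in Lemma~\ref{lem:kappa-properties} doing essentially all the work. The first observation is that $\HoHo$ carries $N(E)$ into $N(F)$: given $n\in N(E)$ and any $d'\in D(F)$, write $d'=\HoHo(d)$ with $d\in D(E)$ (possible since $\HoHo(D(E))=D(F)$), so that $\HoHo(n)d'\HoHo(n)^*=\HoHo(ndn^*)\in D(F)$ and symmetrically; hence $\HoHo(n)\in N(F)$ and the defining formula makes sense on representatives. For well-definedness I would fix $(n,x)\sim(m,x)$ and show $(\HoHo(n),\kappa(x))\sim(\HoHo(m),\kappa(x))$, using that $x$ is isolated if and only if $\kappa(x)$ is (as $\kappa$ is a homeomorphism), so the two cases of Definition~\ref{def:eqrel} match up. In the non-isolated case I would transport the witnessing neighbourhood $V\subset\dom(n)\cap\dom(m)$ to $\kappa(V)$, using Lemma~\ref{lem:kappa-properties}\eqref{subit:kappa2a} to put $\kappa(V)$ inside $\dom(\HoHo(n))\cap\dom(\HoHo(m))$ and Lemma~\ref{lem:kappa-properties}\eqref{subit:kappa2b} to see that $\alpha_{\HoHo(n)}$ and $\alpha_{\HoHo(m)}$ agree throughout $\kappa(V)$.

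In the isolated case the equality $\alpha_{\HoHo(n)}(\kappa(x))=\alpha_{\HoHo(m)}(\kappa(x))$ is again immediate from Lemma~\ref{lem:kappa-properties}\eqref{subit:kappa2b}, and the delicate point is the degree condition. Here I would use $\HoHo(p_x)=p_{\kappa(x)}$ (Lemma~\ref{lem:kappa-properties}\eqref{it:kappa1}) together with the fact that $\HoHo$ is a $*$-homomorphism to write $p_{\kappa(x)}\HoHo(n)^*\HoHo(m)p_{\kappa(x)}=\HoHo(p_xn^*mp_x)$. Since $n^*m\in N(E)$ and $x$ is isolated, Lemma~\ref{lem:isolated}\eqref{it:iso1} gives $p_xn^*mp_x=r\one_{\{(x,k,x)\}}$, and the hypothesis that its degree is $0$ forces $k=0$, so $p_xn^*mp_x=rp_x\in D(E)$. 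Therefore its image lies in $\HoHo(D(E))=D(F)$, which is $0$-graded; hence $p_{\kappa(x)}\HoHo(n)^*\HoHo(m)p_{\kappa(x)}$ has degree $0$, as required. This shows $\Psi$ is well-defined.

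For the homomorphism property, composability of $[(n_1,x_1)]$ and $[(n_2,x_2)]$ means $\alpha_{n_2}(x_2)=x_1$, whence Lemma~\ref{lem:kappa-properties}\eqref{subit:kappa2b} gives $\alpha_{\HoHo(n_2)}(\kappa(x_2))=\kappa(x_1)$, so the images are composable; both sides of the homomorphism identity then reduce to $[(\HoHo(n_1)\HoHo(n_2),\kappa(x_2))]$ using $\HoHo(n_1n_2)=\HoHo(n_1)\HoHo(n_2)$, and preservation of units, range, source and inverse follows the same pattern via $\HoHo(n^*)=\HoHo(n)^*$. For bijectivity I would run the construction symmetrically for $\HoHo^{-1}$: the Stone-dual homeomorphism attached to $\HoHo^{-1}$ is exactly $\kappa^{-1}$, since the ultrafilter map of an inverse is the inverse of the ultrafilter map, so $\Psi_{\HoHo^{-1}}:W_F\to W_E$ is a well-defined groupoid homomorphism by the previous paragraphs, and $\Psi_{\HoHo^{-1}}\circ\Psi$ and $\Psi\circ\Psi_{\HoHo^{-1}}$ are the respective identities by inspection. (If the topological statement is wanted as well, one checks $\Psi(Z(n))=Z(\HoHo(n))$ using Lemma~\ref{lem:kappa-properties}\eqref{subit:kappa2a}, so $\Psi$ and its inverse send basic open sets to basic open sets.)

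I expect the main obstacle to be precisely the degree condition in the isolated case. The natural worry is that a mere ring $*$-isomorphism may fail to respect the $\Z$-grading, and indeed in general it does. The resolution, and the reason Definition~\ref{def:eqrel} is phrased with degrees, is that a degree-$0$ element of the form $p_xn^*mp_x$ at an isolated $x$ is forced to be diagonal, and the diagonal is exactly what $\HoHo$ preserves; thus the grading is preserved precisely on the elements where the equivalence relation can detect it, and no global compatibility of $\HoHo$ with the grading is needed.
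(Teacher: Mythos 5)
Your proposal is correct and follows essentially the same route as the paper's proof: transporting the germ condition through $\kappa$ via Lemma~\ref{lem:kappa-properties} in the non-isolated case, and in the isolated case reducing $p_xn^*mp_x$ to a scalar multiple of $p_x$ so that the diagonal-preserving hypothesis (rather than any global grading compatibility of $\HoHo$) forces the image to be $0$-graded, then handling bijectivity by symmetry via $\HoHo^{-1}$ and $\kappa^{-1}$. The only cosmetic difference is that you obtain $p_xn^*mp_x=rp_x$ from Lemma~\ref{lem:isolated}\eqref{it:iso1} applied to the normalizer $n^*m$, where the paper applies Lemma~\ref{jackpot} to the $0$-graded element $p_xn^*mp_x$; both are immediate and your closing remark correctly identifies why the degree condition survives a mere ring $*$-isomorphism.
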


\begin{proof}
Since $\HoHo$ is $*$- and diagonal-preserving by assumption,  $\HoHo$ maps the normalizers of $D(E)$ to the normalizers of $D(F)$. If $x\in\dom(n)$ then $\kappa(x)\in\dom(\HoHo(n))$ by Lemma~\ref{lem:kappa-properties}\eqref{subit:kappa2a}, and the formula for $\Psi$ makes sense.  

To see $\Psi$ is well-defined, suppose that $(n_1,x)\sim (n_2,x)$. We need to show that $(\HoHo(n_1),\kappa(x))\sim (\HoHo(n_2),\kappa(x))$. Since  $\kappa$ is a homeomorphism, $x$ is isolated in $E^\infty$  if and only if $\kappa(x)$ is isolated in $F^\infty$.

First, suppose that $x$ is not isolated. Since $(n_1,x)\sim (n_2,x)$, there exists a neighbourhood $V\subset \dom(n_1)\cap \dom(n_2)$ of $x$  such that $\alpha_{n_1}(z)=\alpha_{n_2}(z)$ for all $z\in V$.  Then $\kappa(V)$ is an open neighbourhood of $\kappa(x)$, and is contained in $\dom(\HoHo(n_1))\cap \dom(\HoHo(n_2))$ by Lemma~\ref{lem:kappa-properties}\eqref{subit:kappa2a}. Let $y\in\kappa(V)$. Then $y=\kappa(z)$ for some $z\in V$ and using Lemma~\ref{lem:kappa-properties}\eqref{subit:kappa2b} we have
\[
\alpha_{\HoHo(n_1)}(y)=\alpha_{\HoHo(n_1)}(\kappa (z))=\kappa(\alpha_{n_1}(z))=\kappa (\alpha_{n_2} (z))=\alpha_{\HoHo(n_2)}(\kappa(z))=\alpha_{\HoHo(n_2)}(y).
\]
Thus $(\HoHo(n_1),\kappa(x))\sim (\HoHo(n_2),\kappa(x))$ if $x$ is not isolated.

Second, suppose that $x$ is  isolated. Then $\alpha_{n_1}(x)=\alpha_{n_2}(x)$, and it follows again from Lemma~\ref{lem:kappa-properties}\eqref{subit:kappa2b}  that $\alpha_{\HoHo(n_1)}(\kappa(x))=\alpha_{\HoHo(n_2)}(\kappa(x))$. Also, $p_x n_1^* n_2p_x$ is $0$-graded.
Apply Lemma~\ref{jackpot} with $a=p_xn_1^*n_2p_x$ to get
\[p_xn_1^*n_2p_x = p_xap_x = rp_x \in D(E).\]
Now
\[\HoHo(p_xn_1^*n_2p_x) = \HoHo(rp_x) \in D(F)\]
because $\HoHo$ is diagonal preserving.  Thus
$\HoHo(p_xn_1^*n_2p_x) = p_{\kappa(x)}\HoHo(n_1)^*\HoHo(n_2)p_{\kappa(x)}$ is $0$-graded.
Therefore $(\HoHo(n_1),\kappa(x))\sim (\HoHo(n_2),\kappa(x))$ when $x$ is  isolated.  
This proves that $\Psi$ is well-defined. 

The map $[(m,z)]\mapsto [(\HoHo\inv(m),\kappa\inv(z))]$ is clearly an inverse for $\Psi$, so $\Psi$ is a bijection. (That $\Psi^{-1}$ is well-defined follows from the same calculation used to show that $\Psi$ is well-defined.)
The topology on $W_E$ has basic open sets $Z(n):=\{[(n,x)]: x\in \dom(n)\}$.  Here $\Psi(Z(n))=Z(\HoHo(n))$ and $\Psi\inv(Z(m))=Z(\HoHo\inv(m))$. Thus $\Psi$ is both continuous and open, and hence is a homeomorphism.
Finally, we check  that $\Psi$ is a homomorphism:
\begin{align*}
\Psi([(n_1,\alpha_{n_2}(x))][(n_2,x)]) &=\Psi([(n_1n_2, x)])\\
&=[(\HoHo(n_1n_2), \kappa(x))]\\
&=[(\HoHo(n_1), \alpha_{\HoHo(n_2)}(\kappa(x)))][(\HoHo(n_2), \kappa(x))]\\
&=[(\HoHo(n_1), \kappa(\alpha_{n_2}(x)))][(n_2, \kappa(x))]\\
&=\Psi([(n_1,\alpha_{n_2}(x))])\Psi([(n_2,x)]).\qedhere
\end{align*}
\end{proof}

\begin{thm}
\label{thm:main2}
Let $E$ and $F$ be row-finite directed graphs with no sources, and let $R$ be an integral domain with identity. The following are equivalent.
\begin{enumerate}
\item\label{main2-item1} There is a ring $*$-isomorphism $\HoHo: L_R(E)\to L_R(F)$ such that $\pi(D(E))=D(F)$.
\item\label{main2-item2}  The topological groupoids $G_E$ and $G_F$ are isomorphic. 
\end{enumerate}
\end{thm}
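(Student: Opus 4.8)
The plan is to prove the two implications separately, drawing almost entirely on the machinery already assembled in Sections~\ref{secWeyl} and~\ref{sec-main}.

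For \eqref{main2-item1}$\Rightarrow$\eqref{main2-item2} I would simply compose isomorphisms already in hand. Given a diagonal-preserving ring $*$-isomorphism $\HoHo: L_R(E)\to L_R(F)$, Theorem~\ref{thm:groupoid iso} supplies topological groupoid isomorphisms $\Phi_E: G_E\to W_E$ and $\Phi_F: G_F\to W_F$, and Proposition~\ref{prop:diagiso} supplies a topological groupoid isomorphism $\Psi_\HoHo: W_E\to W_F$. Then
\[
\Phi_F\inv\circ\Psi_\HoHo\circ\Phi_E: G_E\to G_F
\]
is a composite of topological groupoid isomorphisms, hence is itself one. So this direction needs no work beyond invoking these results.

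For \eqref{main2-item2}$\Rightarrow$\eqref{main2-item1} I would use functoriality of the Steinberg-algebra construction. Recall from Theorem~\ref{thm-model} that we identify $L_R(E)$ with $A_R(G_E)$ and $L_R(F)$ with $A_R(G_F)$, viewing their elements as $R$-valued functions on the groupoids. Given a topological groupoid isomorphism $\theta: G_E\to G_F$, I would define $\HoHo(f):=f\circ\theta\inv$ for $f\in A_R(G_E)$. The points to check are: (i) $\HoHo(f)\in A_R(G_F)$, since $\theta$ is a homeomorphism carrying each basic compact open set $Z(\mu,\nu)$ to a compact open set on which $r$ and $s$ are injective, and $\one_B\circ\theta\inv=\one_{\theta(B)}$; (ii) $\HoHo$ intertwines the convolution products, which is a direct computation using that $\theta\inv$ commutes with $r$, with multiplication and with inversion, so that reindexing $\eta=\theta\inv(\zeta)$ matches the two convolution sums; (iii) $\HoHo$ is $*$-preserving, because $\theta\inv$ commutes with inversion and $\HoHo(f)^*(\gamma)=\overline{f(\theta\inv(\gamma\inv))}=\HoHo(f^*)(\gamma)$; and (iv) $\HoHo$ is bijective, with inverse $g\mapsto g\circ\theta$. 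Finally, a groupoid homomorphism sends units to units, so $\theta(E^\infty)=F^\infty$ after identifying the unit spaces with the infinite-path spaces; hence $\theta$ restricts to a homeomorphism of the infinite-path spaces and $\HoHo$ carries $D(E)$ onto $D(F)$, giving the diagonal-preserving property.

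I expect \eqref{main2-item1}$\Rightarrow$\eqref{main2-item2} to be essentially immediate. The main, if modest, obstacle is the reverse implication: one must confirm that $\theta$ respects the ample structure, so that images of the basic compact open sets are again compact open sets on which $r$ and $s$ are injective (ensuring $\HoHo$ really maps into the Steinberg algebra), and then verify the convolution-intertwining identity cleanly. Both are routine once one uses that $\theta$ is simultaneously a homeomorphism and a groupoid homomorphism, but this is where what little content there is in \eqref{main2-item2}$\Rightarrow$\eqref{main2-item1} resides.
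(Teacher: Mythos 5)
Your proposal is correct and follows the paper's proof exactly: the forward implication composes $\Phi_F\inv\circ\Psi_\HoHo\circ\Phi_E$ just as the paper does, and the reverse implication defines $\HoHo(f)=f\circ\theta\inv$ and notes that $\theta$ carries the unit space $E^\infty$ onto $F^\infty$, which is precisely the paper's argument. Your checks (i)--(iv) merely spell out what the paper asserts in one line, namely that $f\mapsto f\circ\Omega\inv$ is a $*$-algebra isomorphism, and they are all sound.
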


\begin{proof} Assume \eqref{main2-item1}. Let $\Phi_E: G_E\to W_E$ be the isomorphism of topological groupoids from Theorem~\ref{thm:groupoid iso} and let $\Psi_{\HoHo}: W_E\to W_F$ be the isomorphism of topological groupoids from Proposition~\ref{prop:diagiso}. Then $\Phi_F\inv\circ\Psi_{\HoHo}\circ\Phi_E: G_E\to G_F$ is an isomorphism of topological groupoids. 

Assume \eqref{main2-item2}.  Say $\Omega:G_E\to G_F$ is a topological isomorphism.  Let $\HoHo: L_R(E)\to L_R(F)$ be $\HoHo(f)=f\circ\Omega\inv$; then  $\HoHo$ is an algebra $*$-isomorphism, and hence is in particular  a ring $*$-isomorphism. 
Since $\Omega$ maps the unit space $E^\infty$ of $G_E$ to $F^\infty$, it follows that $\pi$ preserves the diagonal. 
\end{proof}

\begin{cor}\label{cor1} Let $E$ and $F$ be row-finite directed graphs with no sources, and let $R$ be an integral domain and let $S$ be a commutative ring, 
both  with identities. Suppose that there is a ring $*$-isomorphism $\HoHo: L_R(E)\to L_R(F)$ such that $\pi(D(E))=D(F)$.   Then there exists an algebra $*$-isomorphism of $L_S(E)$ onto $L_S(F)$.  
Further,  there is a $C^*$-algebra isomorphism of $C^*(E)$ onto $C^*(F)$. 
\end{cor}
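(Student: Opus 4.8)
The plan is to feed the hypothesis into Theorem~\ref{thm:main2} and then observe that the resulting groupoid isomorphism is blind to the coefficient ring, so that it simultaneously manufactures isomorphisms of Steinberg algebras over $S$ and of the groupoid $C^*$-algebras. First I would apply the implication \eqref{main2-item1}$\Rightarrow$\eqref{main2-item2} of Theorem~\ref{thm:main2} to the given $\HoHo$; this produces a map $\Omega:G_E\to G_F$ that is simultaneously a homeomorphism and a groupoid isomorphism.

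For the first assertion I would reuse the construction from the proof of the reverse implication of Theorem~\ref{thm:main2}, but now over $S$. Using the identification $L_S(E)=A_S(G_E)$ from Theorem~\ref{thm-model}, define $\sigma:A_S(G_E)\to A_S(G_F)$ by $\sigma(f)=f\circ\Omega\inv$. Since $\Omega$ is a homeomorphism that carries compact open bisections of $G_E$ onto compact open bisections of $G_F$, the function $f\circ\Omega\inv$ is again locally constant with compact open support, so $\sigma$ takes values in $A_S(G_F)$; because $\Omega$ respects composition, inversion, and the embedding of the unit space, $\sigma$ intertwines the convolution products and the involutions and is therefore an algebra $*$-isomorphism. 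The point worth emphasising is that $\Omega$ was built from $\HoHo$ over the fixed integral domain $R$, yet the formula $f\mapsto f\circ\Omega\inv$ makes sense for functions valued in \emph{any} commutative ring with identity, and this is exactly why the conclusion holds for an arbitrary $S$.

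For the second assertion I would pass to the groupoid $C^*$-algebra. The graph groupoid $G_E$ is an ample Hausdorff, hence \'etale, groupoid, and by \cite{KPRR97} there is a canonical isomorphism $C^*(E)\cong C^*(G_E)$. The same topological groupoid isomorphism $\Omega$ induces the $*$-isomorphism $f\mapsto f\circ\Omega\inv$ of the convolution $*$-algebras $C_c(G_E)$ and $C_c(G_F)$, and this map is isometric for the respective universal norms, so it extends to a $C^*$-isomorphism $C^*(G_E)\cong C^*(G_F)$. Combining this with the identifications $C^*(E)\cong C^*(G_E)$ and $C^*(F)\cong C^*(G_F)$ yields the desired $C^*(E)\cong C^*(F)$.

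The corollary is almost entirely bookkeeping once Theorem~\ref{thm:main2} is in hand; the one substantive step, and the place I would be most careful, is verifying that $f\mapsto f\circ\Omega\inv$ genuinely sends $A_S(G_E)$ into $A_S(G_F)$ (and likewise $C_c(G_E)$ into $C_c(G_F)$). This reduces to the fact that a topological isomorphism of ample groupoids matches up their distinguished bases of compact open bisections, which is what guarantees that characteristic functions of the basic sets $Z(\mu,\nu)$ are carried to elements of the target algebra.
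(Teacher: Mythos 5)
Your proposal is correct and takes essentially the same route as the paper: the paper's proof of Corollary~\ref{cor1} likewise applies Theorem~\ref{thm:main2} to obtain a topological isomorphism $G_E\cong G_F$ and then simply invokes the fact that topologically isomorphic groupoids have $*$-isomorphic Steinberg algebras over any commutative ring $S$ with identity and isomorphic $C^*$-algebras. The only difference is that you spell out the induced map $f\mapsto f\circ\Omega\inv$ and verify it lands in $A_S(G_F)$ (respectively extends to the $C^*$-completion), details the paper leaves implicit.
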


\begin{proof}  By Theorem~\ref{thm:main2}, the groupoids $G_E$ and $G_F$ are topologically isomorphic. Hence their Leavitt path algebras over any commutative ring $S$ with identity are isomorphic as $*$-algebras, and similarly the $C^*$-algebras. 
\end{proof}

\begin{cor} Let $E$ and $F$ be row-finite directed graphs with no sources, and let $R$ be an integral domain with identity. Suppose  that $\HoHo: L_R(E)\to L_R(F)$  is a ring $*$-isomorphism  such that $\pi(D(E))=D(F)$. Then $d=\pi(d)\circ \kappa$ for all $d\in D(E)$. 
\end{cor}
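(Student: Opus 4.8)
The plan is to reduce everything to Lemma~\ref{lem-nice}, which asserts that $d=\pi(d)\circ\kappa$ holds for all $d\in D(E)$ exactly when $\pi|_{D(E)}$ is an $R$-algebra homomorphism. Since $\pi$ is assumed to be a ring $*$-isomorphism, the restriction $\pi|_{D(E)}$ is already additive, multiplicative and compatible with the involution; the only feature not automatic is $R$-linearity. So the entire content of the corollary is the single statement
\[
\pi(r d)=r\,\pi(d)\qquad\text{for all }r\in R,\ d\in D(E),
\]
and once this is in hand Lemma~\ref{lem-nice} finishes the proof immediately.

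To establish $R$-linearity I would reduce to the generators of $D(E)$: writing a general $d$ in normal form and using additivity, it suffices to show $\pi(r\one_{Z(\alpha)})=r\,\pi(\one_{Z(\alpha)})$ for each $\alpha\in P(E)$ and each $r\in R$. Put $q:=\pi(\one_{Z(\alpha)})$, an idempotent of $D(F)$. Because $\one_{Z(\alpha)}$ absorbs $r\one_{Z(\alpha)}$ on both sides, $\pi(r\one_{Z(\alpha)})$ lies in the commutative corner $q\,D(F)\,q$ and is fixed by multiplication by $q$; hence $r\mapsto\pi(r\one_{Z(\alpha)})$ is a ring homomorphism of $R$ into this corner sending $1$ to $q$. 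To name the resulting scalar I would test on isolated paths: for an isolated $x\in Z(\alpha)$ we have $\pi(p_x)=p_{\kappa(x)}$ by Lemma~\ref{lem:kappa-properties}\eqref{it:kappa1}, and restricting to the corner $p_xL_R(E)p_x$, whose $0$-graded part is a copy of $R$ by Lemma~\ref{jackpot} and Lemma~\ref{lem:isolated}\eqref{it:iso4}, presents the induced map as a ring endomorphism $\phi\colon R\to R$ with $\phi(1)=1$, so that $(\pi(r\one_{Z(\alpha)})\circ\kappa)(x)=\phi(r)$ at such points. The $*$-compatibility of $\pi$ forces $\phi(\bar r)=\overline{\phi(r)}$.

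The hard part will be to show that the scalar extracted at each point is $r$ itself, i.e.\ that the induced ring endomorphism of $R$ is the identity; equivalently, that $\pi(r\one_{Z(\alpha)})=r\,q$. A priori this coefficient map is only an injective ring endomorphism of $R$ fixing $1$ and commuting with the involution $r\mapsto\bar r$, and it could in principle vary from point to point, so the crux is to rule out any nontrivial such behaviour. I would attempt this by combining the support equality $\supp(d)=\supp(\pi(d)\circ\kappa)$ from Proposition~\ref{prop-definekappa} with the $*$-relation above and the compatibility of $\kappa$ with the partial action in Lemma~\ref{lem:kappa-properties}, reducing the whole corollary to the purely ring-theoretic assertion that the only endomorphism of $R$ arising in this way is the identity. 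This triviality step is where the hypotheses on $R$ and on its involution must be used to the full, and I expect it to be the only real difficulty; once it is settled, $\pi|_{D(E)}$ is $R$-linear and Lemma~\ref{lem-nice} completes the argument.
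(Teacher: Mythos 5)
Your reduction is the right one: by Lemma~\ref{lem-nice} the corollary is equivalent to $R$-linearity of $\pi|_{D(E)}$, and your corner analysis correctly shows that $r\mapsto \pi(r\one_{Z(\alpha)})$ is governed by ring endomorphisms of $R$ fixing $1$ and commuting with the involution. But the proposal never actually proves the step you yourself flag as the crux --- that these endomorphisms are the identity --- so as it stands this is a genuine gap, and in fact that step cannot be filled from the stated hypotheses. Take $E=F$ and $R=\C$ (so the involution is conjugation, per the paper's convention) and define $\pi\colon L_\C(E)\to L_\C(E)$ in the Steinberg model by coefficientwise conjugation, $\pi(f)(\gamma)=\overline{f(\gamma)}$, i.e.\ $\sum r_{\mu,\nu}\one_{Z(\mu,\nu)}\mapsto\sum \overline{r_{\mu,\nu}}\,\one_{Z(\mu,\nu)}$. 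This is additive, multiplicative (conjugation passes through the convolution sums, whose structure constants are integral), bijective, commutes with $f^*(\gamma)=\overline{f(\gamma^{-1})}$, and maps $D(E)$ onto $D(E)$. It fixes every idempotent of $D(E)$ (these are characteristic functions, with coefficients $0,1$), so the induced Boolean-algebra isomorphism is the identity and $\kappa=\operatorname{id}$; yet the extracted coefficient map is $\phi(r)=\overline{r}$, and for $d=i\one_{Z(v)}$ one has $\pi(d)\circ\kappa=-i\one_{Z(v)}\neq d$. None of the tools you propose for the ``triviality step'' can rule this out: the support identity $\supp(d)=\supp(\pi(d)\circ\kappa)$ of Proposition~\ref{prop-definekappa} and the equivariance statements of Lemma~\ref{lem:kappa-properties} are blind to such a coefficientwise twist, and $*$-compatibility only forces $\phi$ to commute with the involution, which conjugation does. (The same construction works for any ring automorphism $\sigma$ of an integral domain commuting with the involution, e.g.\ $\sqrt{2}\mapsto-\sqrt{2}$ on $\mathbb{Q}(\sqrt{2})$, where the involution is trivial.) A lesser issue is that your scalar-extraction argument via $p_x$ only applies at isolated paths, but the twisting example is the fatal one.

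It is worth comparing this with the paper's own proof, which is entirely different from your plan: it invokes Corollary~\ref{cor1} to produce an $R$-algebra $*$-isomorphism of $L_R(E)$ onto $L_R(F)$ (via the groupoid isomorphism of Theorem~\ref{thm:main2}), notes that its diagonal restriction is $R$-linear, and then cites Lemma~\ref{lem-nice}. Observe, however, that the isomorphism furnished by Corollary~\ref{cor1} is in general a different map from the given $\pi$, and Lemma~\ref{lem-nice} applied to it yields the identity $d=\pi'(d)\circ\kappa'$ only for that new map; for the original $\pi$ the desired identity is, by Lemma~\ref{lem-nice}, equivalent to exactly the $R$-linearity of $\pi|_{D(E)}$ that you were unable to establish --- and that the twisting example shows can fail. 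So while your proposal does not prove the corollary, your diagnosis of where the real difficulty sits is accurate, and more precise than the paper's one-line deduction: the conclusion is automatic when additivity already forces $R$-linearity (e.g.\ $R=\Z$ or $\mathbb{Q}$), but for general $R$ it requires either assuming $\pi$ is $R$-linear on $D(E)$ or some rigidity hypothesis on $R$ and its involution.
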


\begin{proof} By Corollary~\ref{cor1} there exists an $R$-algebra $*$-isomorphism of $L_R(E)$ onto $L_R(F)$. Its restriction to the diagonal is an $R$-algebra homomorphism. Thus  $d=\pi(d)\circ \kappa$ for all $d\in D(E)$ by Lemma~\ref{lem-nice}.
\end{proof}

As mentioned in the introduction, taking $R=\C$ in Corollary~\ref{cor1} gives some evidence towards the ``isomorphism conjecture for graph algebras'' from \cite[page~3752]{AT}.

\begin{cor}
\label{cor2}
Let $E$ and $F$ be row-finite directed graphs with no sources and let $R$ be an integral domain. The following are equivalent.
\begin{enumerate}
\item\label{cor2-item1} There is a ring $*$-isomorphism $\pi: L_R(E)\to L_R(F)$ such that $\pi(D(E))=D(F)$.
\item\label{cor2-item2} The topological groupoids $G_E$ and $G_F$ are isomorphic. 
\item\label{cor2-item3}   There is a $*$-algebra isomorphism of $L_\Z(E)$ onto $L_\Z(F)$. 
\item\label{cor2-item4}  There is a $C^*$-algebra isomorphism of $C^*(E)$ onto $C^*(F)$ which preserves the $C^*$-diagonals.
\end{enumerate}
\end{cor}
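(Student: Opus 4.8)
The plan is to use condition~\eqref{cor2-item2}, the isomorphism of the topological groupoids $G_E$ and $G_F$, as a central hub: it is the only one of the four statements that refers to no coefficient ring at all, so I will show each of the remaining three equivalent to it. The equivalence \eqref{cor2-item1}$\Leftrightarrow$\eqref{cor2-item2} is exactly Theorem~\ref{thm:main2}, so the work is to connect \eqref{cor2-item3} and \eqref{cor2-item4} to \eqref{cor2-item2}.

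For \eqref{cor2-item2}$\Rightarrow$\eqref{cor2-item3} and \eqref{cor2-item2}$\Rightarrow$\eqref{cor2-item4} I would use the fact already invoked in the proof of Corollary~\ref{cor1}, namely that a topological isomorphism $\Omega\colon G_E\to G_F$ induces $*$-isomorphisms of the associated algebras via $f\mapsto f\circ\Omega^{-1}$. Taking $\Z$-coefficients gives a $*$-algebra isomorphism $L_\Z(E)\cong A_\Z(G_E)\to A_\Z(G_F)\cong L_\Z(F)$, which is \eqref{cor2-item3}. The analogous construction on the groupoid $C^*$-algebras gives $C^*(E)\cong C^*(F)$; since $\Omega$ carries the unit space $E^\infty$ onto $F^\infty$, the induced $C^*$-isomorphism carries $C_0(E^\infty)$ onto $C_0(F^\infty)$ and so preserves the $C^*$-diagonals, giving \eqref{cor2-item4}.

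The two reverse implications are where external input is needed. For \eqref{cor2-item4}$\Rightarrow$\eqref{cor2-item2} I would cite \cite[Theorem~5.1]{BCW}: a diagonal-preserving isomorphism of $C^*(E)$ onto $C^*(F)$ forces the graph groupoids to be topologically isomorphic. For \eqref{cor2-item3}$\Rightarrow$\eqref{cor2-item2} I would invoke \cite[Theorem~1]{Toke}, which upgrades a bare $*$-algebra isomorphism $L_\Z(E)\cong L_\Z(F)$, carrying no diagonal hypothesis, to a groupoid isomorphism $G_E\cong G_F$. Assembling these yields \eqref{cor2-item1}$\Leftrightarrow$\eqref{cor2-item2}$\Leftrightarrow$\eqref{cor2-item3} together with \eqref{cor2-item2}$\Leftrightarrow$\eqref{cor2-item4}, so all four conditions are equivalent.

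The bookkeeping above is routine; the conceptually delicate point is the asymmetry between \eqref{cor2-item3} and \eqref{cor2-item1}. For a general integral domain $R$ one must assume the isomorphism is diagonal-preserving, yet for $R=\Z$ statement~\eqref{cor2-item3} drops that hypothesis entirely. The heavy lifting there is done by \cite[Theorem~1]{Toke}, whose content is precisely that $\Z$-coefficients are rigid enough that a $*$-isomorphism is automatically compatible with the groupoid structure. This is the one step I cannot reprove from the tools developed in this paper, and so it is the genuine obstacle, imported rather than derived.
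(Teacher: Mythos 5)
Your proof is correct, and its skeleton --- using the groupoid condition \eqref{cor2-item2} as the hub and Theorem~\ref{thm:main2} for \eqref{cor2-item1}$\Leftrightarrow$\eqref{cor2-item2} --- matches the paper's. The difference is in how the equivalence of \eqref{cor2-item2}, \eqref{cor2-item3} and \eqref{cor2-item4} is sourced: the paper disposes of it in a single stroke by citing \cite[Theorem~1]{Toke}, whose statement already packages the full equivalence \eqref{cor2-item2}$\Leftrightarrow$\eqref{cor2-item3}$\Leftrightarrow$\eqref{cor2-item4}, whereas you re-derive the forward implications \eqref{cor2-item2}$\Rightarrow$\eqref{cor2-item3} and \eqref{cor2-item2}$\Rightarrow$\eqref{cor2-item4} directly from the induced map $f\mapsto f\circ\Omega^{-1}$ (exactly as in the proof of Corollary~\ref{cor1}, including the observation that $\Omega$ carries unit space to unit space and hence diagonal to diagonal) and then split the external load, importing \cite[Theorem~5.1]{BCW} for \eqref{cor2-item4}$\Rightarrow$\eqref{cor2-item2} and \cite[Theorem~1]{Toke} only for \eqref{cor2-item3}$\Rightarrow$\eqref{cor2-item2}. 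Both decompositions are valid. Yours is more self-contained on the forward arrows and isolates precisely which implication needs Carlsen's rigidity result for $\Z$-coefficients; your closing remark that \eqref{cor2-item3}$\Rightarrow$\eqref{cor2-item2} cannot be reproven from this paper's tools is accurate, since statement~\eqref{cor2-item3} carries no diagonal hypothesis and so lies beyond the diagonal-preserving machinery developed here. The paper's version buys brevity, and note that it never needs to invoke \cite{BCW} in this proof at all, since \cite[Theorem~1]{Toke} subsumes the \eqref{cor2-item4}$\Leftrightarrow$\eqref{cor2-item2} equivalence.
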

\begin{proof} 
By \cite[Theorem~1]{Toke}, \eqref{cor2-item2}--\eqref{cor2-item4} are equivalent, and  \eqref{main2-item1} and \eqref{cor2-item2} are equivalent by  Theorem~\ref{thm:main2}.
\end{proof}

\end{document}